\newtheorem{theorem}{Theorem}[section]
\newtheorem{proposition}[theorem]{Proposition}
\newtheorem{corollary}[theorem]{Corollary}
\theoremstyle{definition}
\newtheorem{definition}[theorem]{Definition}
\theoremstyle{remark}
\renewcommand*\env@matrix[1][*\c@MaxMatrixCols c]{%
  \hskip -\arraycolsep
  \let\@ifnextchar\new@ifnextchar
  \array{#1}}
\newcommand{\be}{\begin{equation}}
\newcommand{\ee}{\end{equation}}
\newcommand{\al}{\alpha}
\newcommand{\Om}{\Omega}
\newcommand{\om}{\omega}
\newcommand{\dz}{\wedge}
\newcommand{\ba}{\begin{array}}
\newcommand{\ea}{\end{array}}
\newcommand{\beq}{\begin{eqnarray}}
\newcommand{\eeq}{\end{eqnarray}}
\newtheorem{lm}{lemma}
\newtheorem{thee}{theorem}
\newtheorem{proo}{proposition}
\newtheorem{co}{corollary}
\newtheorem{rem}{remark}
\newtheorem{deff}{definition}
\newcommand{\bd}{\begin{deff}}
\newcommand{\ed}{\end{deff}}
\newcommand{\bl}{\begin{lm}}
\newcommand{\el}{\end{lm}}
\newcommand{\bp}{\begin{proo}}
\newcommand{\ep}{\end{proo}}
\newcommand{\bt}{\begin{thee}}
\newcommand{\et}{\end{thee}}
\newcommand{\bc}{\begin{co}}
\newcommand{\ec}{\end{co}}
\newcommand{\brm}{\begin{rem}}
\newcommand{\erm}{\end{rem}}
\newcommand{\der}{{\rm d}}
\def\frak{\mathfrak}
\newcommand{\newc}{\newcommand}
\let\ccdot\cdot
\def\cdot{\hbox to 2.5pt{\hss$\ccdot$\hss}}
\newc{\aR}{\mbox{\boldmath{$ R$}}}
\newc{\aS}{\mbox{\boldmath{$ S$}}}
\newc{\aT}{\mbox{\boldmath{$ T$}}}
\newc{\aW}{\mbox{\boldmath{$ W$}}}
\newc{\aK}{\mbox{\boldmath{$ K$}}}
\newc{\aL}{\mbox{\boldmath{$ L$}}}
\newcommand{\hook}{\raisebox{-0.35ex}{\makebox[0.6em][r]
{\scriptsize $-$}}\hspace{-0.15em}\raisebox{0.25ex}{\makebox[0.4em][l]{\tiny
 $|$}}}
\def\bbZ{{\mathbb{Z}}}
\newcommand{\X}{\mbox{\boldmath{$ X$}}}
\newc{\obstrn}[2]{B^{#1}_{#2}}
\newcommand{\rpl}                         % +) or <+
{\mbox{$
\begin{picture}(12.7,8)(-.5,-1)
\put(0,0.2){$+$}
\put(4.2,2.8){\oval(8,8)[r]}
\end{picture}$}}
\newcommand{\lpl}                         % (+ or +>
{\mbox{$
\begin{picture}(12.7,8)(-.5,-1)
\put(2,0.2){$+$}
\put(6.2,2.8){\oval(8,8)[l]}
\end{picture}$}}
\newcommand{\bbR}{\mathbb{R}}
\newcommand{\glg}{\mathbf{GL}}
\newcommand{\gla}{\frak{gl}}
\newcommand{\spg}{\mathbf{Sp}}
\newcommand{\spa}{\mathfrak{sp}}
\newc{\tensor}[1]{#1}
\newc{\Mvariable}[1]{\mbox{#1}}
\newc{\down}[1]{{}_{#1}}
\newc{\up}[1]{{}^{#1}}
\newc{\JulyStrut}{\rule{0mm}{6mm}}
\newc{\midtenPan}{\mbox{\sf S}}
\newc{\midten}{\mbox{\sf T}}
\newc{\midtenEi}{\mbox{\sf U}}
\newc{\ATen}{\mbox{\sf E}}
\newc{\BTen}{\mbox{\sf F}}
\newc{\CTen}{\mbox{\sf G}}
\def\sideremark#1{\ifvmode\leavevmode\fi\vadjust{\vbox to0pt{\vss% the remark
 \hbox to 0pt{\hskip\hsize\hskip1em%                          will appear only
 \vbox{\hsize3cm\tiny\raggedright\pretolerance10000%          on the side
 \noindent #1\hfill}\hss}\vbox to8pt{\vfil}\vss}}}%
\newcommand{\Span}{\mathrm{Span}}
\numberwithin{equation}{section}
\newcounter{romenumi}
\newcommand{\labelromenumi}{(\roman{romenumi})}
\newcommand{\bma}{\begin{pmatrix}}
\newcommand{\ema}{\end{pmatrix}}
\begin{document}
\title{$\spg(3,\bbR)$ Monge geometries in dimension 8} 
\vskip 1.truecm \author{Ian M Anderson} \address{Department of Mathematics and Statistics, Utah State University, Logan Utah, 84322}
\email{\tt Ian.Anderson@usu.edu}
\author{Pawe\l~ Nurowski} \address{Center for Theoretical Physics, Polish Academy of Sciences, Al. Lotnikow 32/46, 02-688 Warszawa, Poland} 
\email{nurowski@fuw.edu.pl} \thanks{This reaserch was supported by the
Polish National Science Centre under the grant DEC-2013/09/B/ST1/01799.}
\date{\today}

\begin{abstract} We study a geometry associated with rank 3 distributions in dimension 8, whose symbol algebra is constant and has a simple Lie algebra $\spa(3,\bbR)$ as Tanaka prolongation. We restict our considerations to only those distributions that are defined in terms of a systems of ODEs of the form
  $\dot{z}_{ij}=\frac{\partial^2 f(\dot{x}_1,\dot{x}_2)}{\partial \dot{x}_i\partial \dot{x}_j}$, $i\leq j=1,2$. For them we built the full system of local differential invariants, by solving an equivalence problem a'la Cartan, in the spirit of his 1910's five variable paper. The considered geometry is a parabolic geometry, and we show that its main invariant - the harmonic curvature - is a certain quintic. In the case when this quintic is maximally degenerate but nonzero, we use Cartan's reduction procedure and reduce the EDS governing the invariants to 11, 10 and 9 dimensions. As a byproduct all homogeneous models having maximally degenerate harmonic curvature quintic are found. They have symmetry algebras of dimension 11 (a unique structue), 10 (a 1-parameter family of nonequivalent structures) or 9 (precisely two nonequivalent structures).  
\end{abstract}
\maketitle
%*************
\tableofcontents
\newcommand{\gat}{\tilde{\gamma}}
\newcommand{\Gat}{\tilde{\Gamma}}
\newcommand{\thet}{\tilde{\theta}}
\newcommand{\Thet}{\tilde{T}}
\newcommand{\rt}{\tilde{r}}
\newcommand{\st}{\sqrt{3}}
\newcommand{\kat}{\tilde{\kappa}}
\newcommand{\kz}{{K^{{~}^{\hskip-3.1mm\circ}}}}
%*

%************
\section{Introduction}\label{se1}
A Monge system of ODEs, introduced in \cite{IZP}, is a generalization of the celebrated ODE 
\be \dot{z}=\tfrac12 \ddot{x}^2.\label{ch}\ee
This is a differential equation for two real functions $z=z(t)$ and $x=x(t)$ of one real variable $t$, first considered by Hilbert \cite{Hilbert} and Cartan \cite{Cartan1910,Cartan1914} at the begining of XXth century. The natural space associated with the Cartan-Hilbert equation (\ref{ch}) is a 5-dimensional jet manifold $M$ parametrized by $(t,x,\dot{x},\ddot{x},z)$. On this manifold the solutions to (\ref{ch}) are curves
$$\gamma(s)=(t(s),x(s),\dot{x}(s),\ddot{x}(s),z(s))$$ tangent to
a rank 2 distribution
$${\mathcal D}~=~\Span_\bbR(~X=\partial_t+\dot{x}\partial_x+\ddot{x}\partial_{\dot{x}}+\tfrac12\ddot{x}^2\partial_z,~~X_1=\partial_{\ddot{x}}~).$$
It is this distribution that encodes the geometry of the ODE (\ref{ch}). As one moves from one point of $M$ to another the \emph{symbol algebra} of $\mathcal D$ is constant and isomorphic to a \emph{3-step nilpotent} Lie algebra
$$\mathfrak{m}={\mathfrak g}_{-3}\oplus{\mathfrak g}_{-2}\oplus{\mathfrak g}_{-1}$$
with
$$\begin{aligned}
  &{\mathfrak g}_{-1}={\mathcal D}=\Span_\bbR(X,X_1),\\
&{\mathfrak g}_{-2}=\Span_\bbR(Y=[X,X_1])=\Span_\bbR(\partial_{\dot{x}}+\ddot{x}\partial_z),\\ 
  &{\mathfrak g}_{-3}=\Span_\bbR(Z_1=[X,Y],Z_2=[X_1,Y])=\Span_\bbR(\partial_x,\partial_z).\end{aligned}$$
Rank 2 distributions in dimension five, with constant symol algebra isomorphic to $\mathfrak{m}$ above, are called $(2,3,5)$-distributions. If one considers them modulo local diffeomorphisms, they are in one-to-one correspondence with \emph{Monge} ODEs
\be \dot{z}=F(t,x,\dot{x},\ddot{x},z),\label{ch1}\ee
defined in terms of a real differentiable function $F=F(t,x,\dot{x},\ddot{x},z)$ of its five variables $(t,x,\dot{x},\ddot{x},z)$, 
such that $F_{\ddot{y}\ddot{y}}\neq 0$. The correspondence is given via
\be\begin{aligned}\dot{z}=F(t,x,\dot{x},&\ddot{x},z)\quad\longleftrightarrow\\
  &\quad{\mathcal D}_F~=~\Span_\bbR(~X=\partial_t+\dot{x}\partial_x+\ddot{x}\partial_{\dot{x}}+F\partial_z,~~X_1=\partial_{\ddot{x}}~).\end{aligned}\label{ch2}\ee

A remarkable property of Monge ODEs (\ref{ch1}) with $F_{\ddot{y}\ddot{y}}\neq 0$ is that their geometry, encoded in the corresponding $(2,3,5)$ distributions ${\mathcal D}_F$, is describable in terms of a $\tilde{\mathfrak{g}}_2$-valued \emph{Cartan connection} \cite{Cartan1910,Nur}. Here $\tilde{\mathfrak{g}}_2$ denotes the split real form of the exceptional Lie algebra $\mathfrak{g}_2$. In particular, this connection is flat if $F=\tfrac12\ddot{y}^2$, and in such case, which is the case of the Cartan-Hilbert equation (\ref{ch}), the distribution ${\mathcal D}_{F=\tfrac12\ddot{y}^2}$ has $\tilde{\mathfrak{g}}_2$ as its \emph{Lie algebra of infinitesimal symmetries}. We recall here, that a vector field $X\neq 0$ on a manifold $M$ is an \emph{infinitesimal symmetry} of a distribution $\mathcal D$ on $M$ if $[X,{\mathcal D}]\subset{\mathcal D}$, and that infinitesimal symmetries of a distribution $\mathcal D$ on $M$ form a Lie algebra, called the Lie algebra of infinitesimal symmetries of $\mathcal D$.

In Reference \cite{IZP} we generalized the Monge ODE (\ref{ch1}) to a certain class of systems of ODEs, which are defined in terms of a certain class of distributions, which we called \emph{Monge distributions}. The formal definition of them is as follows: 
\begin{definition}
A distribution $\mathcal D$ on an $n$-dimensional manifold $M$ is \emph{Monge} if the following conditions occurs: 
\begin{itemize}
\item[i)] $\mathcal D$ is bracket generating, i.e.
  $${\mathcal D}+[{\mathcal D},{\mathcal D}]+[{\mathcal D},[{\mathcal D},{\mathcal D}]+...=\mathrm{T}M,$$
  and the sum on the left hand side has a finite number of terms,
\item[ii)] $\mathcal D$ has constant symbol algebra $\mathfrak{m}=\oplus_{i=1}^{p\geq2}\mathfrak{g}_{-i}$ of dimension  $\mathrm{dim}(\mathfrak{m})=n$,
  \item[iii)] the grade minus one part $\mathfrak{g}_{-1}$ of the symbol algebra $\mathfrak{m}$ contains an \emph{abelian} subalgebra $\mathfrak{h}\subset\mathfrak{g}_{-1}$ of codimension \emph{one} in $\mathfrak{g}_{-1}$, and $[\mathfrak{g}_{-1},\mathfrak{h}]=\mathfrak{g}_{-2}$,
\item[iv)] the Tanaka prolongation, $\mathfrak{g}_T$, of the symbol algebra $\mathfrak{m}$ is a simple real Lie algebra.
\end{itemize}
\end{definition}
It follows from property iv) that the maximal algebra of infinitesimal symmetries of Monge distribution $\mathcal D$ is then $\mathfrak{g}_T$, and that the geometry associated with Monge geometries are \emph{parabolic} \cite{cap}. It also follows (from property iii)) \cite{IZP}, that a Monge distribution $\mathcal D$ defines a system of ODEs, called the \emph{Monge system} of $\mathcal D$, whose solutions are curves in $M$ tangent to $\mathcal D$.

In particular, the distributions ${\mathcal D}_F$ as in (\ref{ch2}) are Monge for all $F$'s such that $F_{\ddot{x}\ddot{x}}\neq 0$. For them we have $\mathfrak{h}=\Span_\bbR(X_1)$, $\mathfrak{g}_T=\tilde{\mathfrak{g}}_2$, and it follows that their corresponding system of Monge ODEs consists of a single \emph{classical} Monge ODE (\ref{ch1}). 

The pourpose of the present paper is to look closer at a particular class of Monge geometries, different from the Cartan-Hilber ones, which we have chosen from the list of \emph{non-rigid} Monge geometries given in \cite{IZP}. We call them $\spg(3,\bbR)$ \emph{parabolic Monge geometries in dimension 8}. In the terminology of \cite{IZP}, Theorem B, these are \emph{non-rigid} parabolic geometries from the infinite series of geometries called type IIa, with the labelling parameter $\ell=3$. We will establish here all local differential invariants of these geometries, and give all homogeneous examples of them in the case when their \emph{harmonic curvature} is most degenerate but not zero.

Thus the Monge geometries we are going to study here have $M$ of dimension 8. In addition they have $\mathcal D$ of rank 3, and $\mathfrak{g}_T=\spa(3,\bbR)$. In the case of harmonic curvature equal to zero the geometry we are going to study has the following corresponding system of Monge ODEs:
\be\dot{z}_{11}=\dot{x}_1^2,\quad\dot{z}_{12}=\dot{x}_1\dot{x}_2,\quad\dot{z}_{22}=\dot{x}_2^2.\label{flatc3}\ee
Here the unknowns are real functions $(x_1(t),x_2(t),z_{11}(t),z_{12}(t),z_{22}(t))$ of one real variable $t$.

This (flat) system of ODEs naturally lives on an 8-dimensional manifold $M$ parametrized by $(t,x_1,x_2,\dot{x}_1,\dot{x}_2,z_{11},z_{12},z_{22})$. Its Monge distribution $\mathcal D$ has rank 3 and is given by:
\be {\mathcal D}~=~\Span_\bbR(X,X_1,X_2),\label{dm1}\ee
with
\be \begin{aligned}
&X=\partial_t+\dot{x}_1^2\partial_{z_{11}}+\dot{x}_1\dot{x}_2\partial_{z_{12}}+\dot{x}_2^2\partial_{z_{22}}+\dot{x}_1\partial_{x_1}+\dot{x}_2\partial_{x_2},\\
  &X_1=\partial_{\dot{x}_1},\\
  &X_2=\partial_{\dot{x}_2}.\\
\end{aligned}
\label{dm2}\ee
One checks that the symbol algebra of $\mathcal D$ is now:
\be {\mathfrak m}={\mathfrak g}_{-3}\oplus{\mathfrak g}_{-2}\oplus{\mathfrak g}_{-1}.\label{em1}\ee
with
$$\begin{aligned}
&{\mathfrak g}_{-1}={\mathcal D}=\Span_\bbR(X,X_1,X_2),\\
&{\mathfrak g}_{-2}=\Span_\bbR(Y_1,Y_2),\\
&{\mathfrak g}_{-3}=\Span_\bbR(Z_{11},Z_{12},Z_{22}),\end{aligned}$$
where the vector fileds $Y_i,Z_{ij}$ are defined by
$$\begin{aligned}
  &Y_1=[X_1,X]=\partial_{x_1}+2\dot{x}_1\partial_{z_{11}}+2\dot{x}_2\partial_{z_{12}},\\
  &Y_2=[X_2,X]=\partial_{x_2}+2\dot{x}_1\partial_{z_{12}}+2\dot{x}_2\partial_{z_{22}},\\
  &Z_{11}=[Y_1,X_1]=-2\partial_{z_{11}},\\
  &Z_{12}=[Y_1,X_2]=-2\partial_{z_{12}},\\
  &Z_{22}=[Y_2,X_2]=-2\partial_{z_{22}}.
\end{aligned}$$
Thus $\mathfrak{m}$ is 3-step nilpotent Lie algebra, and the distribution $\mathcal D$ is Monge in the sense of Reference \cite{IZP} with co-dimension 1 commutative Lie algebra $\mathfrak{h}$ in $\mathfrak{g}_{-1}$ being
$$\mathfrak{h}=\Span_\bbR(X_1,X_2).$$
In Reference \cite{IZP} the following proposition was proven:
\begin{proposition}
  The Tanaka prolongation of the nilpotent Lie algebra $\mathfrak{m}$ as in (\ref{em1}) is $\mathfrak{g}_T=\spa(3,\bbR)$, and the Lie algebra of infinitesimal symmetries of the distribution $\mathcal D$ given by (\ref{dm1})-(\ref{dm2}) is also isomorphic to $\spa(3,\bbR)$. 
  \end{proposition}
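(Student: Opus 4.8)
The statement has two parts, and I would treat them in turn, the second building on the first. For the Tanaka prolongation I would first compute the degree-zero piece $\mathfrak{g}_0$, the grade-preserving derivations of $\mathfrak{m}$. Since $\mathfrak{m}$ is generated by $\mathfrak{g}_{-1}=\Span_\bbR(X,X_1,X_2)$, a derivation is determined by its values on $X,X_1,X_2$. The key preliminary observation is that the abelian hyperplane $\mathfrak{h}=\Span_\bbR(X_1,X_2)$ is intrinsic: it is exactly the set of $v\in\mathfrak{g}_{-1}$ for which $\mathrm{ad}_v\colon\mathfrak{g}_{-1}\to\mathfrak{g}_{-2}$ has rank $\le 1$ (a direct check from the brackets gives rank $2$ precisely when the $X$-component of $v$ is nonzero). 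Hence every grade-$0$ derivation preserves $\mathfrak{h}$ and acts on the line $\mathfrak{g}_{-1}/\mathfrak{h}$ by a scalar. Writing $D|_{\mathfrak{h}}=A\in\gla(2,\bbR)$ and $DX=\lambda X \ (\mathrm{mod}\ \mathfrak{h})$, the derivation property together with $Y_i=[X_i,X]$ and $Z_{ij}\sim[Y_i,X_j]$ forces the action on $\mathfrak{g}_{-2}$ and $\mathfrak{g}_{-3}$ to be the induced representations (the standard $\gla(2,\bbR)$-module on $\mathfrak{g}_{-2}$ shifted by $\lambda$, and the symmetric square on $\mathfrak{g}_{-3}\cong S^2\bbR^2$). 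No further constraints arise, so $\mathfrak{g}_0\cong\gla(2,\bbR)\oplus\bbR$ has dimension $5$.

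For the positive part I would run Tanaka's recursion: $\mathfrak{g}_{k}$ ($k\ge 1$) is the space of degree-$k$ maps $D\in\bigoplus_i\mathrm{Hom}(\mathfrak{g}_i,\mathfrak{g}_{i+k})$ satisfying $D[u,v]=[Du,v]+[u,Dv]$, where the already-constructed graded pieces serve as targets. A degree-by-degree computation should give $\dim\mathfrak{g}_1=3$, $\dim\mathfrak{g}_2=2$, $\dim\mathfrak{g}_3=3$ and $\mathfrak{g}_4=0$, so that $\mathfrak{g}_T=\bigoplus_{i=-3}^{3}\mathfrak{g}_i$ is a $21$-dimensional $|3|$-graded Lie algebra. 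It then remains to identify this graded algebra with $\spa(3,\bbR)=\mathfrak{sp}(6,\bbR)$: this is cleanest done by the reverse recognition, namely by exhibiting on $\mathfrak{sp}(6,\bbR)$ the $|3|$-grading obtained by crossing the last two nodes of the $C_3$ Dynkin diagram, checking that its negative part has graded dimensions $(3,2,3)$ with reductive part $\sla(2,\bbR)$ and two-dimensional centre in degree $0$, and matching the resulting bracket relations with those of $\mathfrak{m}$ under an explicit basis identification $\mathfrak{g}_{-3}\cong S^2\bbR^2$. The main obstacle is the termination/rigidity step — proving that the prolongation really stops at degree $3$ (equivalently, that $\spa(3,\bbR)$ is the \emph{full}, and not merely \emph{a}, graded prolongation of $\mathfrak{m}$); this can be settled either by the explicit verification that $\mathfrak{g}_4=0$, or by invoking the prolongation theorem for simple graded Lie algebras (Tanaka--Yamaguchi), after confirming that the present grading is not among its short list of exceptional, non-rigid cases.

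For the second assertion I would pass from the algebraic prolongation to the geometric symmetry algebra. By Tanaka's theory, for any distribution with constant symbol $\mathfrak{m}$ the Lie algebra of infinitesimal symmetries has dimension at most $\dim\mathfrak{g}_T$, with equality exactly on the locally flat model. The distribution $\mathcal{D}$ of (\ref{dm1})--(\ref{dm2}) is precisely this flat model: it is the left-invariant distribution generated by $\mathfrak{g}_{-1}$ on the nilpotent group with Lie algebra $\mathfrak{m}$ (the coordinates $(t,x_i,\dot x_i,z_{ij})$ realizing the group law), and it corresponds to the curvature-free system (\ref{flatc3}). Hence its symmetry algebra attains the bound $\dim\mathfrak{g}_T=21$ and is isomorphic to $\mathfrak{g}_T=\spa(3,\bbR)$. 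As an independent check one can integrate the symmetry equations $[V,\mathcal{D}]\subset\mathcal{D}$ directly for a general vector field $V$ on $M$: the resulting overdetermined linear system closes up on a $21$-dimensional solution space whose bracket structure is that of $\spa(3,\bbR)$, recovering the same conclusion.
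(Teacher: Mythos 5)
Your proposal cannot be matched against a proof in this paper, because the paper contains none: the proposition is imported verbatim from Reference \cite{IZP} (``In Reference \cite{IZP} the following proposition was proven''), and the nearest argument the paper actually carries out is Proposition \ref{parap}, which runs in the opposite direction --- it starts from the Maurer--Cartan equations of $\spg(3,\bbR)$, builds the homogeneous space $M=\spg(3,\bbR)/P$, and checks that the induced invariant rank-3 distribution has symbol $\mathfrak{m}$, so that $\spa(3,\bbR)$ visibly acts by symmetries of the flat model. Your route is the complementary, bottom-up one: compute the graded derivation algebra $\mathfrak{g}_0$, run Tanaka's recursion for the positive degrees, recognize the resulting $|3|$-graded algebra as the $C_3$ grading with the last two simple roots crossed, and then obtain the symmetry statement from Tanaka's theorem that the group model of $\mathfrak{m}$ realizes the full (finite-dimensional) prolongation as its symmetry algebra. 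That plan is correct and self-contained: your characterization of $\mathfrak{h}$ as the locus where $\mathrm{ad}_v\colon\mathfrak{g}_{-1}\to\mathfrak{g}_{-2}$ has rank $\le 1$ checks out (rank is $2$ exactly when the $X$-component is nonzero), the graded dimensions $(3,2,3)$ you predict for $\mathfrak{g}_1,\mathfrak{g}_2,\mathfrak{g}_3$ agree with the Monge gradation displayed in Section \ref{se2}, and your identification of $\mathcal D$ in (\ref{dm1})--(\ref{dm2}) with the left-invariant distribution on the nilpotent group of $\mathfrak{m}$ is legitimate, since the eight frame fields $X,X_i,Y_i,Z_{ij}$ are pointwise independent and all their remaining brackets vanish, so they close to a copy of $\mathfrak{m}$ acting simply transitively. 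What your approach buys is independence from \cite{IZP}; what the paper's top-down construction buys is that the only hard step left over (that the prolongation is not \emph{larger} than $\spa(3,\bbR)$) is isolated and delegated.

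One step as written is inconsistent and must be repaired. In the $\mathfrak{g}_0$ computation you allow $DX=\lambda X\ (\mathrm{mod}\ \mathfrak{h})$, claim ``no further constraints arise,'' and conclude $\dim\mathfrak{g}_0=5$; but if the $\mathfrak{h}$-component of $DX$ were genuinely unconstrained, the count would be $4+1+2=7$. The missing constraint comes from the relation $[X,Y_i]=0$ in $\mathfrak{m}$: applying a graded derivation $D$ gives $0=[DX,Y_i]+[X,DY_i]=[DX,Y_i]$, and since $[X_1,Y_1]=-Z_{11}$ and $[X_2,Y_1]=-Z_{12}$ are linearly independent, the $\mathfrak{h}$-component of $DX$ is forced to vanish. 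Only then is $D$ determined by $(A,\lambda)\in\gla(2,\bbR)\oplus\bbR$ and the count $5$ correct. On the termination issue you flag: explicit verification of $\mathfrak{g}_4=0$ does suffice, since in Tanaka's prolongation an element of $\mathfrak{g}_{k+1}$ bracketing $\mathfrak{g}_{-1}$ into $\mathfrak{g}_k=0$ must itself vanish by the defining injectivity; the alternative appeal to the Yamaguchi prolongation theorem also works, provided you actually check that this $C_3$ grading is not on his exceptional list --- which is precisely the bookkeeping that \cite{IZP} performs.
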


Actually for $\mathcal D$ and $\mathfrak{m}$ as in (\ref{dm1})-(\ref{dm2}), (\ref{em1}) we have:
$$\mathfrak{g}_T=\spa(3,\bbR)=\mathfrak{m}\oplus\mathfrak{p},$$
with
$$\mathfrak{p}=\mathfrak{g}_0\oplus\mathfrak{g}_1\oplus\mathfrak{g}_2\oplus\mathfrak{g}_3,$$
and
$$\dim\mathfrak{g}_{\pm1}=\dim\mathfrak{g}_{\pm 3}=3,\quad\dim \mathfrak{g}_{\pm2}=2,\quad\dim\mathfrak{g}_0=5.$$
This gives a gradation in $\spa(3,\bbR)$:
\be
\spa(3,\bbR)=\mathfrak{m}\oplus\mathfrak{p}={\mathfrak g}_{-3}\oplus{\mathfrak g}_{-2}\oplus{\mathfrak g}_{-1}\oplus\mathfrak{g}_0\oplus\mathfrak{g}_1\oplus\mathfrak{g}_2\oplus\mathfrak{g}_3,\label{mgr}\ee
$$[\mathfrak{g}_i,\mathfrak{g}_j]\subset\mathfrak{g}_{i+1}.$$
It is related to a \emph{parabolic} subalgebra $\mathfrak{p}\subset\spa(3,\bbR)$ chosen by the following decoration of the $\spa(3,\bbR)$ Dynkin diagram: \\ 
\centerline{\begin{tikzpicture}[scale=0.7]
  \draw (1,0) -- (2,0);
    \draw (2,0.1) -- (3,0.1);
    \draw (2,-0.1) -- (3,-0.1);
    \draw (2.6,-0.2) -- (2.4,0) -- (2.6,0.2);
    \filldraw[color=black] (2,0) circle  (5pt);
    \filldraw[color=white] (2,0) circle (4pt);
    \draw (1.8,-0.2) -- (2.2,0.2);
    \draw (1.8,0.2) -- (2.2,-0.2);
  \filldraw[color=black] (3,0) circle (5pt);
  \filldraw[color=white] (3,0) circle (4pt);
  \draw (2.8,-0.2) -- (3.2,0.2);
    \draw (2.8,0.2) -- (3.2,-0.2);
    \filldraw[color=black] (1,0) circle (5pt);
    \filldraw[color=white] (1,0) circle (4pt);
  \end{tikzpicture}.}
We will call the gradation (\ref{mgr}) as the \emph{Monge gradation of} $\spa(3,\bbR)$.

This shows that the geometry we are going to consider in this paper is a (non-rigid) \emph{parabolic geometry} of type $(\spg(3,\bbR), P)$ with a parabolic subgroup $P\subset\spg(3,\bbR)$ corresponding to the decorated Dynkin diagram \begin{tikzpicture}[scale=0.7]
  \draw (1,0) -- (2,0);
    \draw (2,0.1) -- (3,0.1);
    \draw (2,-0.1) -- (3,-0.1);
    \draw (2.6,-0.2) -- (2.4,0) -- (2.6,0.2);
    \filldraw[color=black] (2,0) circle  (5pt);
    \filldraw[color=white] (2,0) circle (4pt);
    \draw (1.8,-0.2) -- (2.2,0.2);
    \draw (1.8,0.2) -- (2.2,-0.2);
  \filldraw[color=black] (3,0) circle (5pt);
  \filldraw[color=white] (3,0) circle (4pt);
  \draw (2.8,-0.2) -- (3.2,0.2);
    \draw (2.8,0.2) -- (3.2,-0.2);
    \filldraw[color=black] (1,0) circle (5pt);
    \filldraw[color=white] (1,0) circle (4pt);
\end{tikzpicture}. Our goal is to study equivalence problem for these geometries, to define their main local invariants, and provide interesting nonflat homogeneous examples of them. 

This paper should be considered as complementary to Ref. \cite{Gutt}, where an alternative method of classifying homogeneous models for the Monge geometry in dimension 8 is given.

%\subsection{Acknowledgements} This work was supported by the Polish National Science Centre (NCN) grant DEC-2013/09/B/ST1/01799.
%\edz{define the flat coframe, and EDS that it satisfies on $M$}
\section{Flat model and $\spg(3,\bbR)$}\label{se2}
%/home/pawel/MEGA/MEGAsync/notebooks/utah/monge_equations/2016_sp3_in8_typN_system_as_in_Prop2.11_koneksja_poprawione_wspolrzedne_algebra_so34_gradation.nb
We first look at a Monge gradation in $\spa(3,\bbR)$, viewed in the standard 6-dimensional representation. On doing that we set the most general element $X$ of $\spa(3,\bbR)$ in the form of a $6\times 6$ real matrix $X$ given by:
\be\begin{aligned}
  X=\bma[c|c||c|c]
  {\bf c}&{\bf q}&{\bf b}&{\bf v}\\
  \cmidrule(lr){1-4}
  -{\bf p}^T&c&{\bf v}^T&q\\
  \cmidrule(lr){1-4}\morecmidrules\cmidrule(lr){1-4}
  {\bf a}&{\bf u}&-{\bf c}^T&{\bf p}\\
  \cmidrule(lr){1-4}
  {\bf u}^T&p&-{\bf q}^T&-c
  \ema
  =\bma[c|c||c|c]
  \begin{matrix}c^1{}_1&c^1{}_2\\c^2{}_1&c^2{}_2\end{matrix}&\begin{matrix}q^1\\q^2\end{matrix}&\begin{matrix}b^1{}_1&b^1{}_2\\b^1{}_2&b^2_2\end{matrix}&\begin{matrix}v^1\\v^2\end{matrix}\\
  \cmidrule(lr){1-4}
  \begin{matrix}-p^1&-p^2\end{matrix}&c&\begin{matrix}v^1&v^2\end{matrix}&q\\
  \cmidrule(lr){1-4}\morecmidrules\cmidrule(lr){1-4}
  \begin{matrix}a^1{}_1&a^1{}_2\\a^1{}_2&a^2{}_2\end{matrix}&\begin{matrix}u^1\\u^2\end{matrix}&\begin{matrix}-c^1{}_1&-c^2{}_1\\-c^1{}_2&-c^2{}_2\end{matrix}&\begin{matrix}p^1\\p^2\end{matrix}\\
  \cmidrule(lr){1-4}
  \begin{matrix}u^1&u^2\end{matrix}&p&\begin{matrix}-q^1&-q^2\end{matrix}&-c
      \ema.
\end{aligned}\label{sp3r}\ee
Here ${\bf a}={\bf a}^T$, ${\bf b}={\bf b}^T$  and ${\bf c}\in\mathrm{M}_{2\times 2}(\bbR)$ are real $2\times 2$ matrices, ${\bf p}$, ${\bf q}$, ${\bf u}$, ${\bf v}\in \bbR^2$ are real vectors, and $c,p,q\in\bbR$ are real numbers. Thus we have
$$\spa(3,\bbR)=\{X\,\mathrm{as\,in\,(\ref{sp3r})}\}.$$
The reason for this presentation of $\spa(3,\bbR)$ is that it is adapted to the Monge gradation in $\spa(3,\bbR)$. Indeed, we have 
$$\spa(3,\bbR)=\mathfrak{g}_{-3}\oplus\mathfrak{g}_{-2}\oplus\mathfrak{g}_{-1}\oplus\mathfrak{g}_{0}\oplus\mathfrak{g}_{1}\oplus\mathfrak{g}_{2}\oplus\mathfrak{g}_{3}$$
with
$$\begin{aligned}
  &\mathfrak{g}_{-3}=\{\bma[c|c||c|c]
  0&0&0&0\\
  \cmidrule(lr){1-4}
  0&0&0&0\\
  \cmidrule(lr){1-4}\morecmidrules\cmidrule(lr){1-4}
  {\bf a}&0&0&0\\
  \cmidrule(lr){1-4}
  0&0&0&0
  \ema,\,{\bf a}={\bf a}^T\in{\mathrm M}_{2\times 2}(\bbR)~\},\quad\mathfrak{g}_{-2}=\{\bma[c|c||c|c]
  0&0&0&0\\
  \cmidrule(lr){1-4}
  0&0&0&0\\
  \cmidrule(lr){1-4}\morecmidrules\cmidrule(lr){1-4}
  0&{\bf u}&0&0\\
  \cmidrule(lr){1-4}
  {\bf u}^T&0&0&0
  \ema,\,{\bf u}\in\bbR^2\}\\
 &\mathfrak{g}_{-1}=\{\bma[c|c||c|c]
  0&0&0&0\\
  \cmidrule(lr){1-4}
  -{\bf p}^T&0&0&0\\
  \cmidrule(lr){1-4}\morecmidrules\cmidrule(lr){1-4}
  0&0&0&{\bf p}\\
  \cmidrule(lr){1-4}
  0&p&0&0
  \ema,\,{\bf p}\in\bbR^2,\,p\in\bbR\},\quad \mathfrak{g}_{0}=\{\bma[c|c||c|c]
  {\bf c}&0&0&0\\
  \cmidrule(lr){1-4}
  0&c&0&0\\
  \cmidrule(lr){1-4}\morecmidrules\cmidrule(lr){1-4}
  0&0&-{\bf c}^T&0\\
  \cmidrule(lr){1-4}
  0&0&0&-c
  \ema,\,{\bf c}\in\bbR^2,\,c\in\bbR\}\\
&\mathfrak{g}_{1}=\{\bma[c|c||c|c]
  0&{\bf q}&0&0\\
  \cmidrule(lr){1-4}
  0&0&0&q\\
  \cmidrule(lr){1-4}\morecmidrules\cmidrule(lr){1-4}
  0&0&0&0\\
  \cmidrule(lr){1-4}
  0&p&-{\bf q}^T&0
  \ema,\,{\bf q}\in\bbR^2,\,q\in\bbR\},\quad\mathfrak{g}_{2}=\{\bma[c|c||c|c]
  0&0&0&{\bf v}\\
  \cmidrule(lr){1-4}
  0&0&{\bf v}^T&0\\
  \cmidrule(lr){1-4}\morecmidrules\cmidrule(lr){1-4}
  0&0&0&0\\
  \cmidrule(lr){1-4}
  0&0&0&0
  \ema,\,{\bf v}\in\bbR^2\},\\
  &\mathfrak{g}_{3}=\{\bma[c|c||c|c]
  0&0&{\bf b}&0\\
  \cmidrule(lr){1-4}
  0&0&0&0\\
  \cmidrule(lr){1-4}\morecmidrules\cmidrule(lr){1-4}
  0&0&0&0\\
  \cmidrule(lr){1-4}
  0&0&0&0
  \ema,\,{\bf b}={\bf b}^T\in\mathrm{M}_{2\times 2}(\bbR)\}.
\end{aligned}$$
Note that we have the following dimensions: $\dim\mathfrak{g}_{\pm3}=\dim\mathfrak{g}_{\pm1}=3$, $\dim\mathfrak{g}_{\pm2}=2$, $\dim\mathfrak{g}_0=5$, and thus the spaces $\mathfrak{g}_i$, $i=0,\pm1,\pm 2,\pm3$ correspond to the Monge gradation of $\spa(3,\bbR)$, as described in the previous section.

Given the group $\spg(3,\bbR)$ one has its Maurer-Cartan form
$$\omega=\bma[c|c||c|c]
  {\bf c}&{\bf q}&{\bf b}&{\bf v}\\
  \cmidrule(lr){1-4}
  -{\bf p}^T&c&{\bf v}^T&q\\
  \cmidrule(lr){1-4}\morecmidrules\cmidrule(lr){1-4}
  {\bf a}&{\bf u}&-{\bf c}^T&{\bf p}\\
  \cmidrule(lr){1-4}
  {\bf u}^T&p&-{\bf q}^T&-c
  \ema,$$
with the entry 1-forms 
$$\begin{aligned}
  {\bf a}=&\bma\theta^2&\theta^1\\\theta^1&\theta^3\ema,\quad {\bf u}=\bma\theta^5\\\theta^4\ema,\quad {\bf p}=-\bma\theta^7\\\theta^6\ema,\quad p=\theta^8\\
  &{\bf c}=\bma\Om_1&\Om_2\\\Om_3&\Om_4\ema,\quad c=\Om_5,\\
  {\bf q}=&\bma\Om_6\\\Om_7\ema,\quad q=\Om_8,\quad {\bf v}=\bma\Om_9\\\Om_{10}\ema,\quad {\bf b}=\bma\Om_{11}&\Om_{12}\\\Om_{12}&\Om_{13}\ema,
\end{aligned}$$
beeing related to a coframe 
$(\theta^1,\theta^2,\dots,\theta^8,\Om_1,\Om_2,\dots,\Om_{13})$ of the left invariant forms on $\spg(3,\bbR)$. Due to the Maurer-Cartan relations
$$\der\omega+\omega\dz\omega=0,$$
the coframe forms satisfy the following \emph{exterior differential system} (EDS):
\be
\begin{aligned}
&\der\theta^1=\Om_1\dz\theta^1+\Om_2\dz\theta^2+\Om_3\dz\theta^3+\Om_4\dz\theta^1-\theta^4\dz\theta^7-\theta^5\dz\theta^6\\
&\der\theta^2=2\Om_1\dz\theta^2+2\Om_3\dz\theta^1-2\theta^5\dz\theta^7\\
&\der\theta^3=2\Om_2\dz\theta^1+2\Om_4\dz\theta^3-2\theta^4\dz\theta^6\\
&\der\theta^4=\Om_2\dz\theta^5+\Om_4\dz\theta^4+\Om_5\dz\theta^4+\Om_6\dz\theta^1+\Om_7\dz\theta^3+\theta^6\dz\theta^8\\
&\der\theta^5=\Om_1\dz\theta^5+\Om_3\dz\theta^4+\Om_5\dz\theta^5+\Om_6\dz\theta^2+\Om_7\dz\theta^1+\theta^7\dz\theta^8\\
&\der\theta^6=\Om_2\dz\theta^7+\Om_4\dz\theta^6-\Om_5\dz\theta^6-\Om_8\dz\theta^4-\Om_9\dz\theta^1-\Om_{10}\dz\theta^3\\
&\der\theta^7=\Om_1\dz\theta^7+\Om_3\dz\theta^6-\Om_5\dz\theta^7-\Om_8\dz\theta^5-\Om_9\dz\theta^2-\Om_{10}\dz\theta^1\\
&\der\theta^8=2\Om_5\dz\theta^8+2\Om_6\dz\theta^5+2\Om_7\dz\theta^4.
\end{aligned}\label{syu}
\ee
\be
\begin{aligned}
&\der\Om_1=\theta^1\dz\Om_{12}+\theta^2\dz\Om_{11}+\theta^5\dz\Om_9+\theta^7\dz\Om_6-\Om_2\dz\Om_3\\
&\der\Om_2=\theta^1\dz\Om_{11}+\theta^3\dz\Om_{12}+\theta^4\dz\Om_9+\theta^6\dz\Om_6-\Om_1\dz\Om_2-\Om_2\dz\Om_4\\
&\der\Om_3=\theta^1\dz\Om_{13}+\theta^2\dz\Om_{12}+\theta^5\dz\Om_{10}+\theta^7\dz\Om_7+\Om_1\dz\Om_3+\Om_3\dz\Om_4\\
&\der\Om_4=\theta^1\dz\Om_{12}+\theta^3\dz\Om_{13}+\theta^4\dz\Om_{10}+\theta^6\dz\Om_7+\Om_2\dz\Om_3\\
&\der\Om_5=\theta^4\dz\Om_{10}+\theta^5\dz\Om_{9}-\theta^6\dz\Om_{7}-\theta^7\dz\Om_6+\theta^8\dz\Om_8\\
&\der\Om_6=\theta^4\dz\Om_{12}+\theta^5\dz\Om_{11}+\theta^8\dz\Om_{9}-\Om_1\dz\Om_6-\Om_2\dz\Om_7+\Om_5\dz\Om_6\\
&\der\Om_7=\theta^4\dz\Om_{13}+\theta^5\dz\Om_{12}+\theta^8\dz\Om_{10}-\Om_3\dz\Om_6-\Om_4\dz\Om_7+\Om_5\dz\Om_7\\
&\der\Om_8=-2\theta^6\dz\Om_{10}-2\theta^7\dz\Om_{9}-2\Om_5\dz\Om_8\\
&\der\Om_9=-\theta^6\dz\Om_{12}-\theta^7\dz\Om_{11}-\Om_1\dz\Om_{9}-\Om_2\dz\Om_{10}-\Om_5\dz\Om_9-\Om_6\dz\Om_8\\
&\der\Om_{10}=-\theta^6\dz\Om_{13}-\theta^7\dz\Om_{12}-\Om_3\dz\Om_{9}-\Om_4\dz\Om_{10}-\Om_5\dz\Om_{10}-\Om_7\dz\Om_8\\
&\der\Om_{11}=-2\Om_1\dz\Om_{11}-2\Om_2\dz\Om_{12}-2\Om_6\dz\Om_9\\
&\der\Om_{12}=-\Om_1\dz\Om_{12}-\Om_2\dz\Om_{13}-\Om_3\dz\Om_{11}-\Om_4\dz\Om_{12}-\Om_6\dz\Om_{10}-\Om_7\dz\Om_9\\
&\der\Om_{13}=-2\Om_3\dz\Om_{12}-2\Om_4\dz\Om_{13}-2\Om_7\dz\Om_{10}.
\end{aligned}\label{syv}
\ee
We have the following proposition.
\begin{proposition}\label{parap}
  The group $\spg(3,\bbR)$ is naturally fibered $P\to\spg(3,\bbR)\to M$ over a homogeneous space $M=\spg(3,\bbR)/P$ where $P$ is a parabolic subgroup. In the standard 6-dimensional representation as in (\ref{sp3r}), the parabolic $P$ is the subgroup preserving the flag $$N_1=\Span\Big(\bma 1\\0\\0\\0\\0\\0\ema,\bma 0\\1\\0\\0\\0\\0\ema\Big)\subset N_2= \Span\Big(\bma 1\\0\\0\\0\\0\\0\ema,\bma 0\\1\\0\\0\\0\\0\ema,\bma 0\\0\\1\\0\\0\\0\ema\Big)\subset\bbR^6.$$
  The 8-dimensional homogeneous space $M$ is naturally equipped with an $\spg(3,\bbR)$ invariant rank 3 Monge distribution $\mathcal D$. i.e. with an $\spg(3,\bbR)$ parabolic Monge geometry in dimension 8.
\end{proposition}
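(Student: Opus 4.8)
The plan is to build $M$ and $\mathcal D$ directly from the graded structure of $\spa(3,\bbR)$ exhibited above, and then to read off the Monge axioms from the Proposition quoted from \cite{IZP}. First I would fix the invariant symplectic form. The block pattern in (\ref{sp3r}) --- with $\mathbf a=\mathbf a^T$, $\mathbf b=\mathbf b^T$, and the transposed copies $-\mathbf c^T,-\mathbf q^T,-\mathbf v^T$ in the lower blocks --- is exactly the condition $X^TJ+JX=0$ for the off-diagonal form
$$J=\bma[c|c||c|c]
0&0&I_2&0\\
\cmidrule(lr){1-4}
0&0&0&1\\
\cmidrule(lr){1-4}\morecmidrules\cmidrule(lr){1-4}
-I_2&0&0&0\\
\cmidrule(lr){1-4}
0&-1&0&0
\ema,$$
pairing the first $2{+}1$ coordinates with the last $2{+}1$. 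A one-line check that $JX$ is symmetric for every $X$ as in (\ref{sp3r}), and only for such $X$, identifies $\spa(3,\bbR)=\{X:\ X^TJ+JX=0\}$ and confirms that (\ref{sp3r}) is the Lie algebra of $\spg(3,\bbR)$ in its standard representation on $\bbR^6$.

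Next I would check that the flag is isotropic: since the upper-left $3\times 3$ block of $J$ vanishes, the symplectic form restricts to zero on $N_2=\Span(e_1,e_2,e_3)$, so $N_1$ is a $2$-dimensional isotropic subspace and $N_2$ is Lagrangian. The stabilizer of such an isotropic flag in a symplectic group is a parabolic subgroup, which settles the parabolic character of $P$; to match it with the Monge gradation I would compute $\mathrm{Lie}(P)=\{X\in\spa(3,\bbR):\ XN_1\subseteq N_1,\ XN_2\subseteq N_2\}$ directly from (\ref{sp3r}). Reading the first three columns, preservation of the flag forces precisely $\mathbf a=0$, $\mathbf u=0$, $\mathbf p=0$, $p=0$; these conditions annihilate $\mathfrak g_{-3}\oplus\mathfrak g_{-2}\oplus\mathfrak g_{-1}$ and leave exactly $\mathfrak p=\mathfrak g_0\oplus\mathfrak g_1\oplus\mathfrak g_2\oplus\mathfrak g_3$. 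Hence $\mathrm{Lie}(P)=\mathfrak p$ and $\dim M=21-13=8$.

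To produce the invariant distribution I would use the filtration of $T_oM\cong\spa(3,\bbR)/\mathfrak p\cong\mathfrak m$. The subspace corresponding to $\mathfrak g_{-1}\oplus\mathfrak p$ is $\Ad(P)$-invariant, because $[\mathfrak p,\mathfrak g_{-1}\oplus\mathfrak p]\subseteq\mathfrak g_{\ge-1}=\mathfrak g_{-1}\oplus\mathfrak p$ follows from $[\mathfrak g_i,\mathfrak g_j]\subseteq\mathfrak g_{i+j}$; its annihilator in $\mathfrak m^*$ is $\langle\theta^1,\dots,\theta^5\rangle$, the span of the forms dual to $\mathfrak g_{-3}\oplus\mathfrak g_{-2}$. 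This invariant span descends to a rank $5$ Pfaffian system on $M$ whose kernel is a well-defined $\spg(3,\bbR)$-invariant rank $3$ distribution $\mathcal D$. The structure equations (\ref{syu}) confirm the descent and the bracket structure: $\der\theta^1,\der\theta^2,\der\theta^3$ lie in the ideal generated by $\theta^1,\dots,\theta^5$, whereas the surviving terms $\theta^6\dz\theta^8$ and $\theta^7\dz\theta^8$ in $\der\theta^4,\der\theta^5$ show that $\mathcal D$ is bracket-generating rather than integrable, with $[\mathcal D,\mathcal D]$ recovering the $\mathfrak g_{-2}$ directions.

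Finally, to conclude that $\mathcal D$ is Monge I would identify its symbol. By homogeneity the Levi bracket of $\mathcal D$ coincides with the algebraic bracket on $\mathfrak m$, so the symbol is constant and isomorphic to the $3$-step nilpotent algebra (\ref{em1}); the codimension-one abelian subalgebra $\mathfrak h=\Span_\bbR(X_1,X_2)\subset\mathfrak g_{-1}$ with $[\mathfrak g_{-1},\mathfrak h]=\mathfrak g_{-2}$ is read off directly, and the Tanaka prolongation is $\spa(3,\bbR)$ by the cited Proposition; this establishes axioms i)--iv) of the Definition, hence that $(M,\mathcal D)$ is an $\spg(3,\bbR)$ parabolic Monge geometry in dimension $8$. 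I expect the only real obstacles to be the index bookkeeping in matching $\mathrm{Lie}(P)$ with $\mathfrak p$ and the verification that the abstractly constructed $\mathcal D$ carries the correct symbol --- but the latter reduces entirely to the already-established Tanaka prolongation result.
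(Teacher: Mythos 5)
Your proposal is correct, but it follows a genuinely different route from the paper. The paper never touches the flag or the symplectic form: it works entirely with the Maurer--Cartan coframe $(\theta^1,\dots,\theta^8,\Om_1,\dots,\Om_{13})$ and the structure equations (\ref{syu})--(\ref{syv}), observes that $\der\theta^i\dz\theta^1\dz\dots\dz\theta^8=0$ so that the rank-13 annihilator distribution $\mathcal P=\Span(Y_1,\dots,Y_{13})$ is Frobenius-integrable, defines $M$ as the leaf space (the leaves being copies of $P$), and then checks by explicit bracket computations mod $\mathcal P$ that $\tilde{\mathcal D}=\Span(X_6,X_7,X_8)$ descends to an invariant rank-3 distribution on $M$ with symbol $\mathfrak{m}$. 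You instead identify the invariant symplectic form $J$, compute the flag stabilizer on the Lie algebra level to get $\mathrm{Lie}(P)=\mathfrak{p}$ and $\dim M=8$, and then invoke the standard homogeneous-space machinery: the $\Ad(P)$-invariant filtration component $\mathfrak{g}_{-1}\oplus\mathfrak{p}$ descends to $\mathcal D$ on $G/P$, and the Levi bracket of $\mathcal D$ is the algebraic bracket on $\mathfrak{m}$. What your route buys is a verification of the flag-stabilizer description of $P$ that the proposition actually asserts but whose proof the paper leaves implicit (its $P$ is defined only as the leaf group of $\mathcal P$); what the paper's route buys is that it is self-contained at the level of the EDS it has already written down, and it sets up exactly the coframe formalism used in the rest of the paper for the equivalence problem. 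One point you should tighten: the bracket relation $[\mathfrak{p},\mathfrak{g}_{-1}\oplus\mathfrak{p}]\subseteq\mathfrak{g}_{-1}\oplus\mathfrak{p}$ only gives invariance under the identity component of $P$; since $P$ is defined as the flag stabilizer and need not be connected, the clean fix is to note that $\mathfrak{g}_{-1}\oplus\mathfrak{p}=\{X\in\spa(3,\bbR)\,:\,XN_1\subseteq N_2\}$, which is manifestly preserved by $\Ad$ of any group element preserving the flag.
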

\begin{proof}
  Given the group $\spg(3,\bbR)$ consider its Maurer-Cartan coframe $(\theta^1,\dots,\theta^8,\Om_1,$ $\dots,\Om_{13})$ satisfying the EDS (\ref{syu})-(\ref{syv}). Let $(X_1,\dots,X_8,Y_1,\dots,Y_{13})$ be a frame dual to the coframe $(\theta^1,\dots,\theta^8,\Om_1,\dots,\Om_{13})$. We have $X_i\hook\theta^j=\delta_i^j$, $Y_A\hook\Om_B=\delta_{AB}$, $X_i\hook\Omega_A=Y_A\hook\theta^i=0$.

  Observe that the system (\ref{syu}) guarantees that
  $$\der\theta^i\dz\theta^1\dz\theta^2\dz\theta^3\dz\theta^4\dz\theta^5\dz\theta^6\dz\theta^7\dz\theta^8=0,\quad \forall i=1,2,\dots, 8.$$
  This means that the anihilator of $\Span(\theta^1,\theta^2,\dots,\theta^8)$, which is a rank 13 distribution ${\mathcal P}=\Span(Y_1,Y_2,\dots,Y_{13})$ on $\spg(3,\bbR)$ is integrable. Thus $\spg(3,\bbR)$ is foliated by the leaves of the distribution $\mathcal P$. Morever each leave is isomorphic to a Lie group $P$ whose Maurer-Cartan forms satsify the system (\ref{syv}) with all 1-forms $(\theta^1,\theta^2,\dots,\theta^8)$ being equated to zero. We define $M$ to be the leaf space of the foliation given by $\mathcal P$. This shows the fibration property
  $P\to\spg(3,\bbR)\to M$: each pont of $M$ has a leaf of $\mathcal P$ as its fiber in $\spg(3,\bbR)$. 

  Now, still on $\spg(3,\bbR)$, define $\tilde{\mathcal H}=\Span(X_6,X_7)$ and $\tilde{\mathcal D}=\Span(X_6,X_7,X_8)$. Looking at the system (\ref{syu}) we see that $[Y_A,\tilde{\mathcal H}]\subset\tilde{\mathcal H}\,\mathrm{mod}\mathcal P$ and $[Y_A,\tilde{\mathcal D}]\subset\tilde{\mathcal D}\,\mathrm{mod}\mathcal P$. Thus $\tilde{\mathcal H}$ and $\tilde{\mathcal D}$ descend to a respective well defined distributions ${\mathcal H}$ (of rank 2) and ${\mathcal D}$ (of rank 3) on the quotient 8-dimensional space $M$. Since, according to (\ref{syu}), we have $[X_6,X_7]=0\,\mathrm{mod}\mathcal P$, $[\tilde{\mathcal D},\tilde{\mathcal D}]=\Span(X_8,X_7,X_6,X_5,X_4)\,\mathrm{mod}\mathcal P$, and $[\tilde{\mathcal D},\Span(X_8,X_7,X_6,X_5,X_4)]=\Span(X_8,X_7,X_6,X_5,X_4,X_3,X_2,X_1)\,\mathrm{mod}\mathcal P$, then the distribution $\mathcal H$ is integrable, ${\mathcal H}\subset\mathcal D$ and $\mathcal D$ has symbol $\mathfrak{m}$ as in (\ref{mgr}) everywhere on $M$. By the construction the distribution $\mathcal D$ on $M$ has $\spg(3,\bbR)$ symmetry. 
\end{proof}

\section{Monge system $z_{ij}=\frac{\partial^2f(\dot{x}^k)}{\partial\dot{x}{}^i\partial\dot{x}{}^j}$ with $i,j,k=1,2$}
We will now consider a subclass of $\spg(3,\bbR)$ parabolic Monge geometries in dimension 8, corresponding to distributions associated with certain generalizations of the flat system of ODEs (\ref{flatc3}). Starting in
this section till the end of the paper, we will concentrate on ODEs displayed in the title above. For the pourpose of this paper the system of ODEs 
$$z_{ij}=f_{ij}(\dot{x}^k),\quad f_{ij}=\frac{\partial^2f(\dot{x}^k)}{\partial\dot{x}{}^i\partial\dot{x}{}^j},\quad i,j,k=1,2,$$
or more explicitly
\be\begin{aligned}
&\dot{z}_{11}=f_{11}(\dot{x}^1,\dot{x}^2)\\
&\dot{z}_{12}=f_{12}(\dot{x}^1,\dot{x}^2)\\
&\dot{z}_{22}=f_{22}(\dot{x}^1,\dot{x}^2)
\end{aligned}\label{msc3}\ee
will be calledd a \emph{Monge system}. We will associate with it a corresponding EDS that will be a `curved version' of the EDS of a `flat' $\spg(3,\bbR)$ Monge geometry discussed in the
previous section. We will construct a system of differential invariants for the ODEs $z_{ij}=f_{ij}(\dot{x}^k)$, $i,j,k=1,2$, that, in particular, will enable us to determine when our Monge system (\ref{msc3}) is really different from the flat model given by (\ref{flatc3}).  
\subsection{The corresponding exterior differential system}
First, given the Momge system (\ref{msc3}),
we associate a 3-distribution ${\mathcal D}$ in dimension eight to it. This is defined as ${\mathcal D}=\Span(X,X_1,X_2)$ via the vector fields:
$$\begin{aligned}
&X=\partial_t+f_{11}\partial_{z_{11}}+f_{12}\partial_{z_{12}}+f_{22}\partial_{z_{22}}+\dot{x}^1\partial_{x_1}+\dot{x}^2\partial_{x_2}\\
&X_1=\partial_{\dot{x}^1}\\
&X_2=\partial_{\dot{x}^2}.
\end{aligned}
$$
We also have the derived vectors $Y_i$, $Z_{ij}$, defined via $Y_i=[X_i,X]$, $Z_{ij}=[Y_i,X_j]$. Explicitly they are given by:
$$\begin{aligned}
&Y_1=f_{11,1}\partial_{z_{11}}+f_{12,1}\partial_{z_{12}}+f_{22,1}\partial_{z_{22}}+\partial_{x_1}\\
&Y_2=f_{11,2}\partial_{z_{11}}+f_{12,22}\partial_{z_{12}}+f_{22,2}\partial_{z_{22}}+\partial_{x_2}\\
&Z_{11}=-f_{11,11}\partial_{z_{11}}-f_{12,11}\partial_{z_{12}}-f_{22,11}\partial_{z_{22}}\\
&Z_{12}=-f_{11,12}\partial_{z_{11}}-f_{12,12}\partial_{z_{12}}-f_{22,12}\partial_{z_{22}}\\
&Z_{22}=-f_{11,22}\partial_{z_{11}}-f_{12,22}\partial_{z_{12}}-f_{22,22}\partial_{z_{22}}.
\end{aligned}
$$

The case $z_{ij}=f_{ij}(\dot{x}^k)$, as more general than $z_{ij}=\dot{x}^i\dot{x}^j$, has more nonvanishing commutators than just $[X_i,X]$ and $[Y_i,X_j]$. In particular the commutator $[Z_{11},X_1]$ is in general nonzero.

More specifically, modulo the antisymmetry, the nonzero commutators among the vectors $(X,X_1,X_2,Y_1,Y_2,Z_{11}, Z_{12},Z_{22})$ are given by the formulae above, and 
\be\begin{aligned}
&[Z_{11},X_1]=f_{11,111}\partial_{z_{11}}+f_{12,111}\partial_{z_{12}}+f_{22,111}\partial_{z_{22}}\\
&[Z_{11},X_2]=[Z_{12},X_1]=f_{11,112}\partial_{z_{11}}+f_{12,112}\partial_{z_{12}}+f_{22,112}\partial_{z_{22}}\\
&[Z_{22},X_1]=[Z_{12},X_2]=f_{11,122}\partial_{z_{11}}+f_{12,122}\partial_{z_{12}}+f_{22,122}\partial_{z_{22}}\\
&[Z_{22},X_2]=f_{11,222}\partial_{z_{11}}+f_{12,222}\partial_{z_{12}}+f_{22,222}\partial_{z_{22}}.
\end{aligned}\label{cr}\ee

It should be noted that the vector fields $(X,X_1,X_2,Y_1,Y_2,Z_{11}, Z_{12},Z_{22})$ are defined by the distribution $\mathcal D$ modulo the nonsingular linear transformation
\be
\bma X\\X_1\\X_2\\Y_1\\Y_2\\Z_{11}\\Z_{12}\\Z_{22}\ema\to
\bma b_{11}&b_{12}&b_{13}&b_{14}&b_{15}&0&0&0\\
 b_{21}&b_{22}&b_{23}&b_{24}&b_{25}&0&0&0\\
b_{31}&b_{32}&b_{33}&b_{34}&b_{35}&0&0&0\\
b_{41}&b_{42}&b_{43}&b_{44}&b_{45}&0&0&0\\
b_{51}&b_{52}&b_{53}&b_{54}&b_{55}&0&0&0\\
b_{61}&b_{62}&b_{63}&b_{64}&b_{65}&b_{66}&b_{67}&b_{68}\\
b_{71}&b_{72}&b_{73}&b_{74}&b_{75}&b_{76}&b_{77}&b_{78}\\
b_{81}&b_{82}&b_{83}&b_{84}&b_{85}&b_{86}&b_{87}&b_{88}
\ema
\bma X\\X_1\\X_2\\Y_1\\Y_2\\Z_{11}\\Z_{12}\\Z_{22}\ema.\label{nb}
\ee

In the case, in which 
\be\det\bma
f_{11,11}&f_{12,11}&f_{22,11}\\
f_{11,12}&f_{12,12}&f_{22,12}\\
f_{11,22}&f_{12,22}&f_{22,22}
\ema\neq 0\label{ns}\ee
at each point of our 8-dimensional manifold $M$ parametrized by $(t,x^1,x^2,\dot{x}^1,\dot{x}^2,z_{11},z_{12},z_{22})$, we can use this transformation to simplify a bit the last set of the commutation relations above. 

Indeed, assuming (\ref{ns}), we are assured that the three vector fields $(\partial_{z_{11}},\partial_{z_{12}},\partial_{z_{22}})$ are at each point a linear combination of $(Z_{11},Z_{12},Z_{22})$ only. Thus the r.h. sides of (\ref{cr}) are also linear combinations of only $(Z_{11},Z_{12},Z_{22})$ at each point. A little of cosmetics using (\ref{nb}) leads to a new basis $(E_1,E_2,\dots,E_8)$ of vector fileds on $M$, respecting the structure of $\mathcal D$, given by:\\
\centerline{\begin{tabular}{lll}
$E_8=-X$, &$E_7=X_1$, &$E_6=X_2$,\\
$E_5=Y_1$, &$E_4=Y_2$,&\\
$E_3=\tfrac12 Z_{22}$, &$E_2=\tfrac12 Z_{11}$, &$E_1=\tfrac12 Z_{12}$.\end{tabular}}

%/home/pawel/notebooks/utah/monge_equations/sp3_in8_only_p_s_group_reduction_dalej.nb
In this basis the distribution is spanned by $(E_8,E_7,E_6)$ and the nonvanishing commutation relations of the basis vectors are:
$$\begin{aligned}
&[E_8,E_7]=E_5,\quad\quad [E_8,E_6]=E_4,\\
&[E_5,E_7]=2E_2,\quad\quad [E_4,E_6]=2E_3,\quad\quad [E_5,E_6]=[E_4,E_7]=E_1,
\end{aligned}$$
and 
$$\begin{aligned}
&[E_2,E_7]=-(\tilde{u}_3E_1+\tilde{u}_7E_2+\tilde{u}_{11}E_3)\\
&[E_1,E_6]=2[E_3,E_7]=-2(\tilde{u}_1 E_1+\tilde{u}_5E_2+\tilde{u}_9E_3)\\
&[E_1,E_7]=2[E_2,E_6]=-2(\tilde{u}_2E_1+\tilde{u}_6E_2+\tilde{u}_{10}E_3)\\
&[E_3,E_6]=-(\tilde{u}_4E_1+\tilde{u}_8E_2+\tilde{u}_{12}E_3),
\end{aligned}
$$
with the functions $(\tilde{u}_1,\tilde{u}_2,\dots,\tilde{u}_{12})$ on $M$ being uniquely determined by the Monge system $z_{ij}=f_{ij}(\dot{x}^k)$. They are expressible in terms of $f_{ij}(\dot{x}^k)$ and their partial derivatives up to the order three, but the formulae are not relevant here.

Introducing the basis of 1-forms $(\om^1,\om^2,\dots,\om^8)$ dual on $M$ to $(E_1,E_2,\dots,E_8)$, $E_\mu\hook\om^\nu=\delta^\nu_\mu$, we obtain the following structural equations for the distributions $\mathcal D$ related to any Monge system of the form $z_{ij}=f_{ij}(\dot{x}^k)$, $i,j,k=1,2$:
\be
\begin{aligned}
\der\om^1&=2 \tilde{u}_1 \om^1\dz\om^6+2 \tilde{u}_2 \om^1\dz\om^7+\tilde{u}_2 \om^2\dz\om^6+\tilde{u}_3 \om^2\dz\om^7+\tilde{u}_4 \om^3\dz\om^6+\tilde{u}_1 \om^3\dz\om^7\\
&-\om^4\dz\om^7-\om^5\dz\om^6,\\
\der\om^2&=2 \tilde{u}_5 \om^1\dz\om^6+2 \tilde{u}_6 \om^1\dz\om^7+\tilde{u}_6 \om^2\dz\om^6+\tilde{u}_7 \om^2\dz\om^7+\tilde{u}_8 \om^3\dz\om^6+\tilde{u}_5 \om^3\dz\om^7\\
&-2 \om^5\dz\om^7,\\
\der\om^3&=2 \tilde{u}_9 \om^1\dz\om^6+2 \tilde{u}_{10} \om^1\dz\om^7+\tilde{u}_{10} \om^2\dz\om^6+\tilde{u}_{11} \om^2\dz\om^7+\tilde{u}_{12} \om^3\dz\om^6+\tilde{u}_9 \om^3\dz\om^7\\
&-2 \om^4\dz\om^6,\\
\der\om^4&=\om^6\dz\om^8,\\
\der\om^5&=\om^7\dz\om^8,\\
\der\om^6&=0,\\
\der\om^7&=0,\\
\der\om^8&=0.\\
\end{aligned}\label{sysc3}
\ee
We summarize in the following proposition.
\begin{proposition}
Every Monge system (\ref{msc3})
defines a coframe of 1-forms $(\om^1,\om^2,$ $\dots,\om^8)$ satisfying the EDS (\ref{sysc3}) on an 8-dimensional manifold $M$ parametrized by $(t,x^1,x^2,\dot{x}^1,$ $\dot{x}^2,z_{11},z_{12},z_{22})$. The coefficients $(\tilde{u}_1,\tilde{u}_2,\dots,\tilde{u}_{12})$ are expressible in terms of the functions $f_{11}$, $f_{12}$ and $f_{22}$ and their dervatives up to the order three.

The coframe forms $(\om^1,\om^2,\dots,\om^8)$ are defined on $M$ by the Monge system (\ref{msc3}) up to the following linear transformation: 
\be
\bma \om^1\\\om^2\\\om^3\\\om^4\\\om^5\\\om^6\\\om^7\\\om^8\ema\to
\bma a_{1}&a_{2}&a_{3}&a_{4}&a_{5}&0&0&0\\
 a_{6}&a_{7}&a_{8}&a_{9}&a_{10}&0&0&0\\
a_{11}&a_{12}&a_{13}&a_{14}&a_{15}&0&0&0\\
a_{16}&a_{17}&a_{18}&a_{19}&a_{20}&0&0&0\\
a_{21}&a_{22}&a_{23}&a_{24}&a_{25}&0&0&0\\
a_{26}&a_{27}&a_{28}&a_{29}&a_{30}&a_{31}&a_{32}&a_{33}\\
a_{34}&a_{35}&a_{36}&a_{37}&a_{38}&a_{39}&a_{40}&a_{41}\\
a_{42}&a_{43}&a_{44}&a_{45}&a_{46}&a_{47}&a_{48}&a_{49}
\ema
\bma \om^1\\\om^2\\\om^3\\\om^4\\\om^5\\\om^6\\\om^7\\\om^8\ema,\label{nbf}
\ee
with arbitrary functions $a_A$, $A=1,2,\dots, 49$, such that the transformation is nonsingular.
\end{proposition}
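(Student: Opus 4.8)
The plan is to read both assertions off the frame $(E_1,\dots,E_8)$ already constructed above, declaring the coframe $(\om^1,\dots,\om^8)$ to be its dual, $E_\mu\hook\om^\nu=\delta^\nu_\mu$. The non-degeneracy hypothesis (\ref{ns}) is exactly what makes this frame well defined on all of $M$: it guarantees that $(\partial_{z_{11}},\partial_{z_{12}},\partial_{z_{22}})$, hence the right-hand sides of (\ref{cr}), are pointwise combinations of $(Z_{11},Z_{12},Z_{22})$ alone, so that the brackets $[Z_{ij},X_k]$ close on $\Span(E_1,E_2,E_3)$ and the structure functions $\tilde u_1,\dots,\tilde u_{12}$ are unambiguously determined. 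Since everything is carried out in the global coordinates $(t,x^1,x^2,\dot x^1,\dot x^2,z_{11},z_{12},z_{22})$, the coframe is defined on all of $M$.

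For the structure equations I would use nothing beyond the defining identity for a dual coframe,
$$\der\om^\gamma=-\sum_{\alpha<\beta}\om^\gamma\big([E_\alpha,E_\beta]\big)\,\om^\alpha\dz\om^\beta,$$
which is valid for arbitrary (non-constant) structure functions precisely because $\om^\gamma(E_\beta)=\delta^\gamma_\beta$ is constant. Substituting the commutation relations listed just before the proposition reproduces (\ref{sysc3}) term by term: $[E_8,E_7]=E_5$ and $[E_8,E_6]=E_4$ give $\der\om^5=\om^7\dz\om^8$ and $\der\om^4=\om^6\dz\om^8$; the brackets $[E_5,E_7]=2E_2$, $[E_4,E_6]=2E_3$, $[E_5,E_6]=[E_4,E_7]=E_1$ give the $-2\om^5\dz\om^7$, $-2\om^4\dz\om^6$, $-\om^4\dz\om^7$, $-\om^5\dz\om^6$ terms; and the four brackets carrying the $\tilde u$'s reproduce exactly the $\tilde u$-weighted $\om^{1,2,3}\dz\om^{6,7}$ terms of $\der\om^1,\der\om^2,\der\om^3$, while $\der\om^6=\der\om^7=\der\om^8=0$ because $E_6,E_7,E_8$ appear in no bracket's image. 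This is mechanical; the one point worth recording is that the $\tilde u_A$ are by construction the $(E_1,E_2,E_3)$-components of the commutators (\ref{cr}). Solving (\ref{cr}) for those components by inverting the matrix in (\ref{ns}) exhibits each $\tilde u_A$ as a polynomial in the $f_{ij}$ and their derivatives up to order three, divided by the determinant (\ref{ns}); this gives the order-three claim.

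Finally, the ambiguity (\ref{nbf}) is the dual of the frame ambiguity (\ref{nb}). The frame $(E_1,\dots,E_8)$, and hence the coframe, depends only on the choice of adapted frame of $\mathcal D$ together with its derived vectors, and that choice is exactly the freedom (\ref{nb}); transporting (\ref{nb}) through the relabelling $E_8=-X,\ \dots,\ E_1=\tfrac12 Z_{12}$ and dualizing (the coframe transforms by the inverse transpose) yields a nonsingular block-triangular substitution of the $\om$'s preserving the annihilator $\mathrm{Ann}(\mathcal D)=\Span(\om^1,\dots,\om^5)$, which one identifies with (\ref{nbf}). I expect this last bookkeeping to be the main obstacle: one must combine the permutation-and-scaling relabelling with the inverse transpose and check that the resulting group is precisely the block form displayed in (\ref{nbf}) — i.e. the largest linear group of coframe changes adapted to the rank $3$ distribution $\mathcal D$, under which (\ref{sysc3}) is the intended starting point of the equivalence problem. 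By contrast, the passage from the commutators to (\ref{sysc3}) and the order-three statement for the $\tilde u_A$ are routine once the frame is in hand.
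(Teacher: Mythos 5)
Your construction of the coframe and of the structure equations (\ref{sysc3}) is correct and is in substance what the paper does: the paper states this proposition as a summary of the construction immediately preceding it, i.e.\ it takes the coframe dual to $(E_1,\dots,E_8)$, reads (\ref{sysc3}) off the listed commutators via $\der\om^\gamma(E_\alpha,E_\beta)=-\om^\gamma([E_\alpha,E_\beta])$, and the order-three statement holds because the $\tilde u_A$ are the $\Span(E_1,E_2,E_3)$-components of the brackets (\ref{cr}), obtained by inverting the second-order matrix in (\ref{ns}) against third-order right-hand sides. Up to this point your argument is fine and matches the paper.

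The last step, however, contains a genuine error: the inverse transpose of (\ref{nb}) is \emph{not} (\ref{nbf}), and it does not preserve $\Span(\om^1,\dots,\om^5)$. Duality exchanges an invariant subspace with its annihilator. The group (\ref{nb}) preserves the rank-\emph{five} span of $(X,X_1,X_2,Y_1,Y_2)$, that is $\Span(E_4,\dots,E_8)$, while the $Z$'s may mix with everything; dualizing, the coframe transforms by matrices preserving the rank-\emph{three} annihilator $\Span(\om^1,\om^2,\om^3)$, with $\om^4,\dots,\om^8$ enjoying full freedom. In the $\om$-ordering this group has its zero block in the upper-right $3\times 5$ corner, whereas (\ref{nbf}) has it in the upper-right $5\times 3$ corner, and neither group contains the other: the dual of (\ref{nb}) allows $\om^4\to\om^4+\om^6$, which (\ref{nbf}) forbids, while (\ref{nbf}) allows $\om^1\to\om^1+\om^4$, which the dual of (\ref{nb}) forbids. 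So the identification ``dual of (\ref{nb}) $=$ (\ref{nbf})'' fails, and with it your proof of the transformation claim. The repair is to bypass (\ref{nb}) altogether: the genuine choices in the construction (a new adapted basis of $\mathcal D$, the derived vectors $Y_i'=[X_i',X']$, $Z_{ij}'=[Y_i',X_j']$ built from it, and the normalizing cosmetics) change the $E$-frame by a matrix that is block triangular for the flag $\Span(E_6,E_7,E_8)\subset\Span(E_4,\dots,E_8)\subset \mathrm{T}M$; its inverse transpose is then block triangular for the dual flag $\Span(\om^1,\om^2,\om^3)\subset\Span(\om^1,\dots,\om^5)$, i.e.\ a $(3,2,3)$-block lower triangular matrix, and every such matrix is of the form (\ref{nbf}). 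Equivalently and more simply: every coframe the construction can produce has its first five members spanning the annihilator of $\mathcal D$, and any two coframes with that property differ precisely by a matrix of the form (\ref{nbf}). Note that (\ref{nb}) and (\ref{nbf}) are both merely enveloping groups of this true ambiguity; they are not duals of one another.
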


\subsection{Local equivalence}
The above proposition motivates the following definition.

\begin{definition}\label{defmongec3}
Two Monge systems 
$$\begin{aligned}
&\dot{z}_{11}=f_{11}(\dot{x}^1,\dot{x}^2)\\
&\dot{z}_{12}=f_{12}(\dot{x}^1,\dot{x}^2)\\
&\dot{z}_{22}=f_{22}(\dot{x}^1,\dot{x}^2)
\end{aligned}$$ and
$$\begin{aligned}
&\dot{\bar{z}}_{11}=\bar{f}_{11}(\dot{\bar{x}}^1,\dot{\bar{x}}^2)\\
&\dot{\bar{z}}_{12}=\bar{f}_{12}(\dot{\bar{x}}^1,\dot{\bar{x}}^2)\\
&\dot{\bar{z}}_{22}=\bar{f}_{22}(\dot{\bar{x}}^1,\dot{\bar{x}}^2)
\end{aligned}$$
are locally equivalent iff there exists a local diffeomeorphism 
$$\phi~:~ M\to\bar{M}$$
of the corresponding manifolds $M$, with coordinates  $(t,x^1,x^2,\dot{x}^1,\dot{x}^2,z_{11},z_{12},z_{22})$, and $\bar{M}$, with coordinates  $(\bar{t},\bar{x}^1,\bar{x}^2,\dot{\bar{x}}^1,\dot{\bar{x}}^2,\bar{z}_{11},\bar{z}_{12},\bar{z}_{22})$, such that 
$$\phi^*\bma \bar{\om}^1\\\bar{\om}^2\\\bar{\om}^3\\\bar{\om}^4\\\bar{\om}^5\\\bar{\om}^6\\\bar{\om}^7\\\bar{\om}^8\ema\to
\bma a_{1}&a_{2}&a_{3}&a_{4}&a_{5}&0&0&0\\
 a_{6}&a_{7}&a_{8}&a_{9}&a_{10}&0&0&0\\
a_{11}&a_{12}&a_{13}&a_{14}&a_{15}&0&0&0\\
a_{16}&a_{17}&a_{18}&a_{19}&a_{20}&0&0&0\\
a_{21}&a_{22}&a_{23}&a_{24}&a_{25}&0&0&0\\
a_{26}&a_{27}&a_{28}&a_{29}&a_{30}&a_{31}&a_{32}&a_{33}\\
a_{34}&a_{35}&a_{36}&a_{37}&a_{38}&a_{39}&a_{40}&a_{41}\\
a_{42}&a_{43}&a_{44}&a_{45}&a_{46}&a_{47}&a_{48}&a_{49}
\ema
\bma \om^1\\\om^2\\\om^3\\\om^4\\\om^5\\\om^6\\\om^7\\\om^8\ema
$$
with some functions $a_{A}$, $A,B=1,2,\dots,49$ on $M$.
\end{definition}

When solving an equivalence problem for the system $z_{ij}=f_{ij}(\dot{x}^k)$, $i,j,k=1,2$, i.e. when building the system of local differential invariants characterizing it, we first calculate the derived flag for it.

We recall that given a Pfaffian system $\mathcal J$ of 1-forms ${\mathcal J}=(\omega^\mu)$, $\mu=1,2,\dots,N$, on a manifold $M$, the derived flag of it is a sequence of modules of 1-forms $\{{\mathcal J}_k\}$ such that
$${\mathcal J}={\mathcal J}_1\supset{\mathcal J}_2\supset{\mathcal J}_3\supset\dots,$$
and whose corresponding sets of generators $\Theta_k$ are defined inductively by:
$$\Theta_k=\{~\om^\mu\in\Theta_{k-1}~|~\der\om^\mu= 0~{\rm mod}~\Theta_{k-1}~\}.$$
 
We also recall that given a Lie group $G\subset\glg(N,\bbR)$, and two Pfaffian systems ${\mathcal J}$ and $\bar{\mathcal J}$ on two manifolds $M$ and $\bar{M}$, we say that they are locally 
$G$-equivalent iff there exists a diffeomorphism $\phi:M\to \bar{M}$, and a $G$-valued function $a:M\to G$ such that $\phi^*(\bar{\om}^\mu)=a^\mu_{~\nu}\om^\nu$. In particular, the diffeomorphism that realizes $G$-equivalence of two Pfaffian systems have to preserve their derived flags.

In this context our equivalence problem for the Monge equations defined in the previous section, is a $G=\glg(5,\bbR)$ euivalence problem of Pfaffian systems generated by $N=5$ one-forms $\omega^\mu$: 
$$\Theta_1=(\om^1,\om^2,\om^3,\om^4,\om^5),$$ 
on 8-dimensional manifolds, whose differentials satisfy the system (\ref{sysc3}), with some functions $(\tilde{u}_1,\tilde{u}_2,\dots,\tilde{u}_{12})$, and three additional 1-forms $(\om^6,\om^7,\om^8)$, such that $\om^1\dz\om^2\dz\dots\dz\om^8\neq 0$. 

It follows that the derived flag of this system is
$${\mathcal J}_1\supset{\mathcal J}_2\supset{\mathcal J}_3\equiv\{0\},$$
with the corresponding generators
$$\Theta_1=(\om^1,\om^2,\om^3,\om^4,\om^5)$$
and
$$\Theta_2=(\om^1,\om^2,\om^3),\quad\quad \Theta_3=\{0\}.$$

This, in particular shows, that when solving an equivalence problem for such systems, or what is the same, for our Monge equations  $z_{ij}=f_{ij}(\dot{x}^k)$, $i,j,k=1,2$, we can restrict transformations (\ref{nbf}) to 
\be
\bma \om^1\\\om^2\\\om^3\\\om^4\\\om^5\\\om^6\\\om^7\\\om^8\ema\to
\bma a_{1}&a_{2}&a_{3}&0&0&0&0&0\\
 a_{6}&a_{7}&a_{8}&0&0&0&0&0\\
a_{11}&a_{12}&a_{13}&0&0&0&0&0\\
a_{16}&a_{17}&a_{18}&a_{19}&a_{20}&0&0&0\\
a_{21}&a_{22}&a_{23}&a_{24}&a_{25}&0&0&0\\
a_{26}&a_{27}&a_{28}&a_{29}&a_{30}&a_{31}&a_{32}&a_{33}\\
a_{34}&a_{35}&a_{36}&a_{37}&a_{38}&a_{39}&a_{40}&a_{41}\\
a_{42}&a_{43}&a_{44}&a_{45}&a_{46}&a_{47}&a_{48}&a_{49}
\ema
\bma \om^1\\\om^2\\\om^3\\\om^4\\\om^5\\\om^6\\\om^7\\\om^8\ema,\label{nbff}
\ee
without loss of generality. This simplifies the equivalence problem enormously.

We summarize our considerations so far in the following reformulation of Definition \ref{defmongec3}:
\begin{definition}\label{defmongec3p}
Let $G$ be a Lie \emph{subgroup} of $\glg(8,\bbR)$ consisting of $8\times 8$ real matrices $A=(A^i_{~j})$ as below:
$$G=\{~(A^i_{~j})=\bma a_{1}&a_{2}&a_{3}&0&0&0&0&0\\
 a_{6}&a_{7}&a_{8}&0&0&0&0&0\\
a_{11}&a_{12}&a_{13}&0&0&0&0&0\\
a_{16}&a_{17}&a_{18}&a_{19}&a_{20}&0&0&0\\
a_{21}&a_{22}&a_{23}&a_{24}&a_{25}&0&0&0\\
a_{26}&a_{27}&a_{28}&a_{29}&a_{30}&a_{31}&a_{32}&a_{33}\\
a_{34}&a_{35}&a_{36}&a_{37}&a_{38}&a_{39}&a_{40}&a_{41}\\
a_{42}&a_{43}&a_{44}&a_{45}&a_{46}&a_{47}&a_{48}&a_{49}
\ema,~\det( A^i_{~j})\neq 0~\}.$$
Consider two Monge systems  $$z_{ij}=f_{ij}(\dot{x}^k)=z_{ji}\quad\quad{\rm and}\quad\quad  \bar{z}_{ij}=\bar{f}_{ij}(\dot{\bar{x}}^k)=\bar{z}_{ji},\quad\quad i,j,k=1,2,$$
and the corresponding manifolds $M$, with coordinates  $(t,x^1,x^2,\dot{x}^1,\dot{x}^2,z_{11},z_{12},z_{22})$, and $\bar{M}$, with coordinates  $(\bar{t},\bar{x}^1,\bar{x}^2,\dot{\bar{x}}^1,\dot{\bar{x}}^2,\bar{z}_{11},\bar{z}_{12},\bar{z}_{22})$. Let $(\om^1,\om^2,\dots,\om^8)$ and $(\bar{\om}^1,\bar{\om}^2,\dots,\bar{\om}^8)$ be the corresponding coframes which satisfy the system as in (\ref{sysc3}) on $M$ and, respectively, on $\bar{M}$.  

We say that the two Monge systems are \emph{locally equivalent} iff there exists a local diffeomeorphism 
$$\phi~:~ M\to\bar{M},$$ 
such that 
$$\phi^*(\bar{\om}^i)=A^i_{~j}\om^j,$$
with $A=(A^i_{~j}): M\to G$ being a $G$-valued function on $M$. 
\end{definition}
\subsection{The reduced structure group}
%/home/pawel/notebooks/utah/monge_equations/sp3_in8_only_p_s_group_reduction_derived _flag.nb
We now pass to consider the most general of systems (\ref{sysc3}). Thus we extend the coframe 1-forms $(\om^1,\om^2,\dots,\om^8)$ satisfying (\ref{sysc3}) to the most general forms from the equivalence introduced in the Definition \ref{defmongec3p}. We denoted these extended system of 1-forms by $(\theta^1,\theta^2,\dots,\theta^8)$. We have:
\be\theta^i=A^i_{~j}\om^j,\label{theta}\ee
where $(\om^1,\om^2,\dots,\om^8)$ are linearly independent and satisfy (\ref{sysc3}) and $(A^i_{~j})$ is the most general element of $G$. Thus we have a local bundle $G\to G\times M\to M$, and the forms  $(\theta^1,\theta^2,\dots,\theta^8)$ are uniquely defined there. We also have
\be
\der\theta^i=\der A^i_{~j} A^{-1}\phantom{}^j_{~k}\dz\theta^k-\tfrac12 A^i_{~l}c^l_{~pq} A^{-1}\phantom{}^p_{~j} A^{-1}\phantom{}^q_{~k}\theta^j\dz\theta^k,\label{strc}
\ee
where the structure functions $c^i_{~jk}$ are defined by (\ref{sysc3}) and
$$\der\om^i=-\tfrac12 c^i_{~jk}\om^j\dz\om^k.$$
In the following we will be interested in those transformed coefficients
$$\gamma^i_{~jk}=A^i_{~l}c^l_{~pq} A^{-1}\phantom{}^p_{~j} A^{-1}\phantom{}^q_{~k}$$ which can not be absorbed via transformation
\be
\der A^i_{~j} A^{-1}\phantom{}^j_{~k}~\to ~\der A^i_{~j} A^{-1}\phantom{}^j_{~k}+v^i_{~k}\theta^k\label{absoc3}\ee
in the first term $\der A^i_{~j} A^{-1}\phantom{}^j_{~k}\dz\theta^k$ on the r.h.s. of (\ref{strc}).  

Because of the block structure of the matrices $A$ forming the group $H$ 
\begin{itemize}
\item[1)] for all $i=1,2,3$ we have:
$$\der\theta^i\dz\theta^1\dz\theta^2\dz\theta^3=-\tfrac12\gamma^i_{~jk}\theta^j\dz\theta^k\dz\theta^1\dz\theta^2\dz\theta^3,$$
and
\item[2)] for all $i=1,2,3,4,5$ we have:
$$\der\theta^i\dz\theta^1\dz\theta^2\dz\theta^3\dz\theta^4\dz\theta^5=-\tfrac12\gamma^i_{~jk}\theta^j\dz\theta^k\dz\theta^1\dz\theta^2\dz\theta^3\dz\theta^4\dz\theta^5.$$
\end{itemize}
Point 2) above means that the structure functions $\gamma^i_{~jk}$ with $i=1,2,3,4,5$ and $j<k=6,7,8$ can \emph{not} be absorbed by the transformation (\ref{absoc3}). Similarly, Point 1) above means that the  structure functions $\gamma^i_{~jk}$ with $i=1,2,3$ and $j<k=4,5,6,7,8$ can \emph{not} be absorbed by the transformation (\ref{absoc3}). Taking these two pieces of information together we arrive at  
\begin{proposition}
All structure functions $\gamma^i_{~jk}=A^i_{~l}c^l_{~pq} A^{-1}\phantom{}^p_{~j} A^{-1}\phantom{}^q_{~k}$ appearing in (\ref{strc}) that can not be absorbed by transformation (\ref{absoc3}) correspond to the triples $(ijk)$ with $i=1,2,3$ and $j<k=4,5,6,7,8$, and to the triples $(ijk)$ with $i=4,5$ and $j<k=6,7,8$.
\end{proposition}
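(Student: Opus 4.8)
The plan is to read off everything from the block-lower-triangular shape of $G$ relative to the partition $\{1,2,3\}\cup\{4,5\}\cup\{6,7,8\}$, since that shape simultaneously controls the pseudo-connection $\der A^i_{~j}A^{-1}\phantom{}^j_{~k}$ and the absorption freedom (\ref{absoc3}). First I would record that $\pi^i_{~k}:=\der A^i_{~j}A^{-1}\phantom{}^j_{~k}$ is the right Maurer--Cartan form of $G$ and hence takes values in the Lie algebra $\frg\subset\gla(8,\bbR)$ of $G$, which inherits the same forced-zero pattern: $\pi^i_{~k}=0$ whenever $i\in\{1,2,3\}$ and $k\notin\{1,2,3\}$, and whenever $i\in\{4,5\}$ and $k\notin\{1,2,3,4,5\}$. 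The admissible absorbing terms $v^i_{~k}=v^i_{~kl}\theta^l$ in (\ref{absoc3}) obey exactly the same constraint: for each fixed $l$ the matrix $(v^i_{~kl})$ must again lie in $\frg$, so $v^i_{~kl}$ can be nonzero only in those same slots. This is the single structural fact that drives the whole statement.

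Next I would establish the non-absorbability of the listed triples, which is precisely the content of Points 1) and 2). For $i\in\{1,2,3\}$ one wedges (\ref{strc}) with $\theta^1\dz\theta^2\dz\theta^3$: the only surviving columns of $\pi^i_{~k}$ are $k=1,2,3$, so the connection term $\pi^i_{~k}\dz\theta^k$ is annihilated, giving Point 1). For $i\in\{1,\dots,5\}$ one instead wedges with $\theta^1\dz\dots\dz\theta^5$, killing the connection term for the same reason and giving Point 2). The decisive observation is that $\der\theta^i$, and hence each of these wedge products, is a genuine exterior derivative and is therefore unchanged by the reparametrization (\ref{absoc3}). Consequently the torsion coefficients that survive the wedge are absorption-invariant: from Point 1), the $\gamma^i_{~jk}$ with $i=1,2,3$ and $j<k\in\{4,5,6,7,8\}$; from Point 2), the $\gamma^i_{~jk}$ with $i=1,\dots,5$ and $j<k\in\{6,7,8\}$. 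These are exactly the essential coefficients claimed.

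Finally I would close the statement with the completeness direction: every coefficient \emph{not} on the list is absorbable. For a triple $(ijk)$ with $j<k$ off the list, the smaller index $j$ lies in the column range that $\frg$ permits for row $i$ (namely $\{1,2,3\}$ for $i\le 3$, $\{1,2,3,4,5\}$ for $i\in\{4,5\}$, and all of $\{1,\dots,8\}$ for $i\in\{6,7,8\}$), so a suitable $v^i_{~kl}$ removes that component through the antisymmetrized shift $\gamma^i_{~jk}\mapsto\gamma^i_{~jk}+2v^i_{~[jk]}$. Assembling the two index ranges — and noting that the $i\in\{1,2,3\}$ part of Point 2) is already subsumed by Point 1) — yields precisely the stated list. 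The main obstacle is this last bookkeeping: one must check that the $\frg$-constrained absorbing freedom surjects onto exactly the complement of the essential set, disturbing none of the invariant components, and this is exactly what the wedge characterization of Points 1) and 2) certifies, since it pins down the unabsorbable slots as those whose \emph{both} lower indices fall outside the allowed column range for the given row.
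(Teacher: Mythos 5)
Your proposal is correct and follows essentially the same route as the paper: the paper's "proof" is precisely the discussion preceding the proposition, where the block structure of $G$ forces the zero pattern on $\der A^i_{~j}A^{-1}\phantom{}^j_{~k}$, and wedging with $\theta^1\dz\theta^2\dz\theta^3$ (resp.\ $\theta^1\dz\dots\dz\theta^5$) isolates the coefficients $\gamma^i_{~jk}$ with both lower indices outside the allowed column range as absorption-invariant. Your write-up is in fact slightly more complete than the paper's, since you make explicit two points the paper leaves tacit — that the absorbing parameters $v^i_{~kl}$ must themselves be $\frg$-valued (otherwise the claim would be false), and the converse bookkeeping that every coefficient off the list is absorbable because its smaller index necessarily lies in the permitted column range.
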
 

This observation enables us to significantly reduce the structure group $G$ of our equivalence problem: since these $\gamma^i_{jk}$'s can not be absorbed, and since they are \emph{linearly} related to the structurual functions $c^i_{~jk}$ of the original EDS, one can try to \emph{normalize} them in such a way that we have
$$\gamma^i_{~jk}=c^i_{~jk}$$
for all the triples $(ijk)$ mentioned in the above proposition.
These conditions are algebraic constraints on the coefficients of the matrices $A\in G$. They are obviously not contradictory, since $A=Id$ satisfies them. Remarkably, we have the following proposition.
\begin{proposition}\label{pr36}
The most general matrix $A\in G$ for which $\gamma^i_{~jk}=c^i_{~jk}$ occures for all the triples $(ijk)$ such that $i=1,2,3$ and $j<k=4,5,6,7,8$, and $i=4,5$ and $j<k=6,7,8$ is given by
$$\begin{aligned}
&(A^i_{~j})=\\
&\bma a_{29}a_{38}a_{49}(1+b_5b_6)b_7^2&a_{29}a_{38}a_{49}b_5b_7^2&a_{29}a_{38}a_{49}b_6b_7^2&0&0&0&0&0\\
 2a_{38}^2a_{49}b_6b_7^2&a_{38}^2a_{49}b_7^2&a_{38}^2a_{49}b_6^2b_7^2&0&0&0&0&0\\
2a_{29}^2a_{49}b_5b_7^2&a_{29}^2a_{49}b_5^2b_7^2&a_{29}^2a_{49}b_7^2&0&0&0&0&0\\
a_{16}&a_{17}&a_{18}&a_{29}a_{49}b_7&a_{29}a_{49}b_5b_7&0&0&0\\
a_{21}&a_{22}&a_{23}&a_{38}a_{49}b_6b_7&a_{38}a_{49}b_7&0&0&0\\
a_{26}&a_{27}&a_{28}&a_{29}&a_{29}b_5&a_{29}b_7&a_{29}b_5b_7&0\\
a_{34}&a_{35}&a_{36}&a_{38}b_6&a_{38}&a_{38}b_6b_7&a_{38}b_7&0\\
a_{42}&a_{43}&a_{44}&a_{45}&a_{46}&0&0&a_{49}
\ema,
\end{aligned}$$
with 23 coefficients $a_{16}$, $a_{17}$, $a_{18}$, $a_{21}$, $a_{22}$, $a_{23}$, $a_{26}$, $a_{27}$, $a_{28}$, $a_{29}$, $a_{34}$, $a_{35}$, $a_{36}$, $a_{38}$, $a_{42}$, $a_{43}$, $a_{44}$, $a_{45}$, $a_{46}$, $a_{49}$, $b_5$, $b_6$ and $b_7$ constrained by the open condition 
$$\det(A^i_{~j})=a_{29}^5a_{38}^5a_{49}^6(1-b_5b_6)^5b_7^{10}\neq 0.$$
\end{proposition}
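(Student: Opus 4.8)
The plan is to treat the identities $\gamma^i_{~jk}=c^i_{~jk}$ as an algebraic system for the entries of $A$ and to solve it, using the three-step grading to keep the bookkeeping under control. First I would read off from (\ref{sysc3}) the structure constants $c^i_{~jk}$ and sort the eight indices into the graded blocks $\mathrm{I}=\{1,2,3\}$, $\mathrm{II}=\{4,5\}$, $\mathrm{III}=\{6,7,8\}$ of grades $-3,-2,-1$. The grading-preserving (``flat'') constants are exactly those produced by the $-\om^4\dz\om^7-\om^5\dz\om^6$, $-2\om^5\dz\om^7$, $-2\om^4\dz\om^6$ terms of $\der\om^1,\der\om^2,\der\om^3$ and the $\om^6\dz\om^8,\om^7\dz\om^8$ terms of $\der\om^4,\der\om^5$; every term carrying a function $\tilde u_a$ has a lower index in block $\mathrm{I}$. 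The decisive structural remark is that each $A\in G$ is block lower triangular for $(\mathrm{I},\mathrm{II},\mathrm{III})$, so $A^{-1}$ is too, and $(A^{-1})^p_{~j}=0$ whenever $p$ lies in a strictly higher block than $j$.

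Next I would substitute into $\gamma^i_{~jk}=A^i_{~l}c^l_{~pq}(A^{-1})^p_{~j}(A^{-1})^q_{~k}$ and run through the triples singled out in the preceding proposition, namely $i\in\{1,2,3\}$ with $j<k$ in $\{4,\dots,8\}$ and $i\in\{4,5\}$ with $j<k$ in $\{6,7,8\}$. For all of these the lower indices $j,k$ lie in blocks $\mathrm{II},\mathrm{III}$, so the only blocks of $A^{-1}$ that survive are $(A^{-1})_{\mathrm{II},\mathrm{II}},(A^{-1})_{\mathrm{III},\mathrm{III}},(A^{-1})_{\mathrm{III},\mathrm{II}}$, which depend solely on $A_{\mathrm{II},\mathrm{II}},A_{\mathrm{III},\mathrm{III}},A_{\mathrm{III},\mathrm{II}}$; in particular all $\tilde u$-terms (whose matching factor $(A^{-1})^p_{~j}$ has $p\in\mathrm{I}$) are annihilated. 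Combined with the factor $A^i_{~l}$ (contributing $A_{\mathrm{I},\mathrm{I}}$ when $i\in\mathrm{I}$ and, effectively, $A_{\mathrm{II},\mathrm{II}}$ when $i\in\mathrm{II}$), this shows the whole normalization system involves only the four blocks $A_{\mathrm{I},\mathrm{I}},A_{\mathrm{II},\mathrm{II}},A_{\mathrm{III},\mathrm{III}},A_{\mathrm{III},\mathrm{II}}$. The blocks $A_{\mathrm{II},\mathrm{I}}$ and $A_{\mathrm{III},\mathrm{I}}$ never occur, so their entries ($a_{16},a_{17},a_{18},a_{21},a_{22},a_{23}$ and $a_{26},a_{27},a_{28},a_{34},a_{35},a_{36},a_{42},a_{43},a_{44}$) remain arbitrary and account for the bulk of the free parameters.

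The heart of the proof is then to solve the reduced nonlinear system for the coupled blocks. I would start from grade $-1$: imposing $\gamma^4_{~jk}=c^4_{~jk}$, $\gamma^5_{~jk}=c^5_{~jk}$ for $j<k\in\{6,7,8\}$ is compatibility of $A_{\mathrm{III},\mathrm{III}}$ and $A_{\mathrm{II},\mathrm{II}}$ with the brackets $[E_8,E_6]=E_4$, $[E_8,E_7]=E_5$, $[E_6,E_7]=0$; this forces $\om^8$ to decouple and produces the vanishing entries $a_{33}=a_{41}=a_{47}=a_{48}=0$ together with the scaling $b_7$. Feeding this into the grade $-3$ conditions $\gamma^i_{~45}=0$ and $\gamma^i_{~jk}=c^i_{~jk}$ ($i\in\{1,2,3\}$) then pins down $A_{\mathrm{III},\mathrm{II}}$ (yielding $a_{30}=a_{29}b_5$, $a_{37}=a_{38}b_6$, while $a_{45},a_{46}$ stay free) and solves for $A_{\mathrm{I},\mathrm{I}}$ and $A_{\mathrm{II},\mathrm{II}}$ in terms of $a_{29},a_{38},a_{49}$ and the ratios $b_5,b_6,b_7$. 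Assembling the determined blocks with the free entries of $A_{\mathrm{II},\mathrm{I}}$ and $A_{\mathrm{III},\mathrm{I}}$ reproduces the displayed matrix, and block lower triangularity gives $\det A=\det A_{\mathrm{I},\mathrm{I}}\cdot\det A_{\mathrm{II},\mathrm{II}}\cdot\det A_{\mathrm{III},\mathrm{III}}$, which multiplies out to $a_{29}^5a_{38}^5a_{49}^6(1-b_5b_6)^5b_7^{10}$, the nonsingularity of $A\in G$ becoming the stated open condition.

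I expect the only real difficulty to be this middle solve: the bracket-compatibility equations for the diagonal blocks are quadratic and coupled to $A_{\mathrm{III},\mathrm{II}}$, and the art is to choose the reparametrization by $b_5,b_6,b_7$ so that the solution closes into the compact form above rather than an opaque web of rational relations. Getting the several entries to vanish and the determinant exponents ($5,5,6,10$, with the factor $(1-b_5b_6)^5$) to come out exactly is the delicate bookkeeping that substantiates the clean answer.
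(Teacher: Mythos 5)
The paper gives you nothing to compare against here: its entire proof of Proposition \ref{pr36} is the sentence that the claim is ``pure algebra''. So the question is whether your plan, carried out, would actually reproduce the displayed matrix. Its skeleton is correct, and I can confirm the structural pillars: $G$, hence $A^{-1}$, is block lower triangular for the splitting $\{1,2,3\},\{4,5\},\{6,7,8\}$; every $\tilde u_a$-term of (\ref{sysc3}) has a lower index in the first block and is therefore annihilated in $\gamma^i_{~jk}$ whenever $j,k\geq 4$; consequently the normalization equations involve only $A_{\mathrm{I},\mathrm{I}}$, $A_{\mathrm{II},\mathrm{II}}$, $A_{\mathrm{III},\mathrm{III}}$, $A_{\mathrm{III},\mathrm{II}}$, so the fifteen entries of $A_{\mathrm{II},\mathrm{I}}$ and $A_{\mathrm{III},\mathrm{I}}$ stay free; and the determinant factors over the diagonal blocks, whose determinants are $a_{29}^3a_{38}^3a_{49}^3b_7^6(1-b_5b_6)^3$, $a_{29}a_{38}a_{49}^2b_7^2(1-b_5b_6)$ and $a_{29}a_{38}a_{49}b_7^2(1-b_5b_6)$, multiplying to exactly the stated expression.

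The genuine gap is in the middle solve, which you yourself single out as the heart of the proof: the normalizations are attached to the wrong equations, and your first reduction fails as stated. The conditions with $i=4,5$ and $j<k\in\{6,7,8\}$ do \emph{not} force $a_{33}=a_{41}=a_{47}=a_{48}=0$, and they do not produce $b_7$. What they give is: (i) the quadratic relations $a_{31}a_{48}=a_{47}a_{32}$ and $a_{39}a_{48}=a_{47}a_{40}$, which kill $a_{47},a_{48}$ only after you invoke $\det A\neq 0$ (otherwise columns six and seven of $A_{\mathrm{III},\mathrm{III}}$ would be proportional); and (ii) formulas expressing $A_{\mathrm{II},\mathrm{II}}$ as $a_{49}$ times the upper-left $2\times 2$ corner of $A_{\mathrm{III},\mathrm{III}}$. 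The entries $a_{33},a_{41}$ are completely unconstrained at this stage; their vanishing is a grade $-3$ statement, coming from $\gamma^i_{~48}=\gamma^i_{~58}=0$, $i=1,2,3$, which are linear in the two entries $(A^{-1})^6{}_8,(A^{-1})^7{}_8$ with coefficients built from the nonsingular blocks $A_{\mathrm{I},\mathrm{I}}$ and $A_{\mathrm{II},\mathrm{II}}$, hence force both to vanish. Likewise $b_7$ is not a grade $-1$ scaling: it is the proportionality factor forced by the three conditions $\gamma^i_{~45}=0$, $i=1,2,3$, which say precisely that the block with rows $(a_{31},a_{32})$, $(a_{39},a_{40})$ is a scalar multiple of the block with rows $(a_{29},a_{30})$, $(a_{37},a_{38})$; after that, $b_5$ and $b_6$ are merely names for the ratios $a_{30}/a_{29}$ and $a_{37}/a_{38}$, and the remaining conditions with $j\in\{4,5\}$, $k\in\{6,7\}$ determine $A_{\mathrm{I},\mathrm{I}}$. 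With the deductions reassigned in this order, and with the nonsingularity of $A$ used explicitly each time a quadratic relation is converted into a vanishing statement, your plan does close up to the displayed matrix; as written, however, its first step claims more than the equations give, and everything ``fed into'' the later steps rests on that unjustified claim.
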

\begin{proof}
The proof is a pure algebra, so we skip it.
\end{proof}

The 23-parameter subgroup $G_0$ of $G$ consisting of all the inevrtible matrices $A=(A^i_{~j})$ as in Proposition \ref{pr36}, is the reduced structure group of our equivalence problem. In other words, we have just demonstrated that two Monge systems  $z_{ij}=f_{ij}(\dot{x}^k)=z_{ji}$ and $\bar{z}_{ij}=\bar{f}_{ij}(\dot{\bar{x}}^k)=\bar{z}_{ji}$, $i,j,k=1,2,$ are locally equivalent if their corresponding coframe forms $(\om^i)$ and $(\bar{\om}^i)$ are related via diffeomorphism $\phi$ satisfying $\phi^*(\bar{\om}^i)=A^i_{~j}\om^j$, with a function $A$ having valued in $G_0$. 
\subsection{The flat case}\label{flsec}
When dealing with $G$-structures, as our Monge $G_0$ structure considered in the previous Section,  it is always constructive to analyse the simplest case first. In our situation the simplest case is undoubtly the case of the system (\ref{sysc3}) with all the structural functions $(\tilde{u}_1,\tilde{u}_2,\dots,\tilde{u}_{12})$ identically vanishing. This corresponds to the Monge system (\ref{flatc3}) which has $\spg(3,\bbR)$ as the maximal group of local symmetries. Here we will show that there is a more general class of Monge systems (\ref{sysc3}) which also has this property. For the reasons which will be clear later we will now focus on the system (\ref{sysc3}) with
%/home/pawel/notebooks/utah/monge_equations/sp3_in8_only_p_s_group_reduction_reduced_simplest.nb
\be
\tilde{u}_3=\tfrac32\tilde{u}_{10},\quad \tilde{u}_4=\tfrac32\tilde{u}_5,\quad \tilde{u}_7=3(2\tilde{u}_2-\tilde{u}_9),\quad \tilde{u}_8=\tilde{u}_{11}=0,\quad \tilde{u}_{12}=3(2\tilde{u}_1-\tilde{u}_6).\label{flco}\ee

For further convenience we rename the six free variables $(\tilde{u}_1,\tilde{u}_2,\tilde{u}_5,\tilde{u}_6,\tilde{u}_9,\tilde{u}_{10})$ to $(u_1,u_2,u_3,u_4,u_5,u_6)$ via:
$$\tilde{u}_1=u_1,\quad \tilde{u}_2=u_2,\quad \tilde{u}_5=\tfrac23u_4,\quad \tilde{u}_6=u_5,\quad \tilde{u}_9=\tfrac13(6u_2-u_6),\quad \tilde{u}_{10}=\tfrac23u_3,$$
obtaining the following system of 1-forms $(\om^1,\om^2,\dots,\om^8)$:
\be
\begin{aligned}
\der\om^1&=2 u_1 \om^1\dz\om^6+2 u_2 \om^1\dz\om^7+u_2 \om^2\dz\om^6+u_3 \om^2\dz\om^7\\&+u_4 \om^3\dz\om^6+u_1 \om^3\dz\om^7-\om^4\dz\om^7-\om^5\dz\om^6,\\
\der\om^2&=\tfrac43 u_4 \om^1\dz\om^6+2 u_5 \om^1\dz\om^7+u_5 \om^2\dz\om^6+u_6 \om^2\dz\om^7\\&+\tfrac23 u_4 \om^3\dz\om^7-2 \om^5\dz\om^7,\\
\der\om^3&=\tfrac23(6 u_2-u_6) \om^1\dz\om^6+\tfrac43u_3 \om^1\dz\om^7+\tfrac23u_3 \om^2\dz\om^6\\&+3(2u_1-u_5)\om^3\dz\om^6+\tfrac13 (6u_2-u_6) \om^3\dz\om^7-2 \om^4\dz\om^6,\\
\der\om^4&=\om^6\dz\om^8,\\
\der\om^5&=\om^7\dz\om^8,\\
\der\om^6&=0,\\
\der\om^7&=0,\\
\der\om^8&=0.\\
\end{aligned}\label{sysc3fl}
\ee
We need the closure of this EDS. A short calculation yields the following proposition.
\begin{proposition}
If the eight linearly independent 1-forms $(\om^1,\om^2,\dots,\om^8)$ satisfy the system (\ref{sysc3fl}) we have:
\be
\begin{aligned}
\der u_1&=\tfrac13(12 u_1^2-2 u_2 u_4-9 u_1 u_5+u_4 u_6+3 v_2)\om^6\\&+\tfrac19(18 u_1 u_2+4 u_3 u_4-18 u_2 u_5-3 u_1 u_6+6 v_3)\om^7,\\
\der u_2&=\tfrac19 (36 u_1 u_2+4 u_3 u_4-27 u_2 u_5-6 u_1 u_6+6 v_3)\om^6\\&+\tfrac13 (-6 u_2^2+4 u_1 u_3-3 u_3 u_5+3 u_2 u_6+3 v_1)\om^7,\\
\der u_3&=v_1 \om^6+\tfrac13u_3(6u_2+u_6)\om^7,\\
\der u_4&=u_4(4u_1-u_5)\om^6+v_2 \om^7,\\
\der u_5&=\tfrac19(18 u_1 u_5-9 u_5^2+4 u_4 u_6+6 v_2)\om^6\\&+\tfrac19(8 u_3 u_4-18 u_2 u_5+9 v_3)\om^7,\\
\der u_6&=v_3\om^6+\tfrac13(-36 u_2^2-6 u_3 u_5+12 u_2 u_6+u_6^2+12 v_1)\om^7,\\
\end{aligned}\label{dcs1}
\ee
with functions $v_1$, $v_2$ $v_3$ satisfying:
\be
\begin{aligned}
\der v_1&=\tfrac23(72 u_1 u_2 u_3 + 8 u_3^2 u_4 - 54 u_2 u_3 u_5 - 
 12 u_1 u_3 u_6 + 18 u_2 v_1 + 3 u_6 v_1 + 
 15 u_3 v_3)\om^6\\&+\tfrac19(36 u_1 u_2^2 + 4 u_2 u_3 u_4 - 36 u_2^2 u_5 + 
 3 u_1 u_3 u_5 - 3 u_3 u_5^2 - 12 u_1 u_2 u_6 + 
 6 u_2 u_5 u_6\\& + u_1 u_6^2 + 3 u_5 v_1 + u_3 v_2 + 
 6 u_2 v_3 - u_6 v_3)\om^7,\\
\der v_2&=\tfrac19 (72 u_1 u_2 u_4+8 u_3 u_4^2-54 u_2 u_4 u_5-12 u_1 u_4 u_6+36 u_1 v_2-9 u_5 v_2+15 u_4 v_3)\om^6\\&+\tfrac23 (18 u_2^2 u_4 - 12 u_1 u_3 u_4 + 27 u_2 u_5^2 - 
 9 u_2 u_4 u_6 + u_4 u_6^2 - 3 u_4 v_1 - 
 18 u_2 v_2 \\&+ 3 u_6 v_2 - 9 u_5 v_3)\om^7,
 \end{aligned}\label{dcs2}\ee
$$\begin{aligned}
\der v_3&=-\tfrac23(12 u_2^2 u_4 - 18 u_1 u_2 u_5 + 18 u_2 u_5^2 - 
 6 u_2 u_4 u_6 + 3 u_1 u_5 u_6 - 2 u_4 v_1 - 
 6 u_2 v_2\\& - 3 u_1 v_3 - 3 u_5 v_3) \om^6\\&-\tfrac29(108 u_2^2 u_5 - 18 u_1 u_3 u_5 + 27 u_3 u_5^2 - 
 4 u_3 u_4 u_6 - 18 u_2 u_5 u_6 - 27 u_5 v_1 - 
 6 u_3 v_2\\& - 18 u_2 v_3 - 3 u_6 v_3)\om^7.\end{aligned}
$$
If the above diferentials are satisfied, then for all $i=1,2,\dots 8$ $$\der^2\om^i= 0,$$  and for all $i=1,2,\dots 6$ $$\der^2u_i= 0,$$ 
and for all $i=1,2,3$ $$\der^2v_i= 0,$$ 
i.e. the system is totally closed.  
\end{proposition}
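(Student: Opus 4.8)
The plan is to prove the proposition by the standard Cartan closure procedure: repeatedly apply the exterior derivative to the structure equations, impose the identity $\der^2=0$ (which holds automatically for genuine differential forms on $M$), and read off the resulting constraints on the derivatives of the structure functions. The first observation is that $\der^2\om^i=0$ is automatic for $i=4,5,6,7,8$. Indeed $\der\om^6=\der\om^7=\der\om^8=0$, while $\der^2\om^4=\der(\om^6\dz\om^8)=0$ and $\der^2\om^5=\der(\om^7\dz\om^8)=0$ because $\om^6,\om^7,\om^8$ are closed. Hence the only nontrivial closure conditions at the first stage come from $\der^2\om^1=\der^2\om^2=\der^2\om^3=0$.

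To exploit these, I would write each unknown differential as $\der u_j=\sum_{a=1}^{8}p_{ja}\,\om^a$ with $p_{ja}$ unknown functions, substitute into $\der(\der\om^i)$ for $i=1,2,3$ using (\ref{sysc3fl}) together with $\der\om^6=\der\om^7=0$, and expand the result in the basis of $3$-forms $\om^a\dz\om^b\dz\om^c$. Setting the coefficient of each independent $3$-form to zero yields a linear algebraic system for the $p_{ja}$. The content of (\ref{dcs1}) is precisely the solution of this system: the components of $\der u_j$ along $\om^1,\dots,\om^5,\om^8$ are forced to vanish (these are pinned down, for instance, by the coefficients of the $3$-forms containing $\om^8$, which can only be produced by the $\der u_j$ terms since the $-\der\om^4\dz\om^7-\der\om^5\dz\om^6$ contributions cancel), while the $\om^6$ and $\om^7$ components are determined in terms of the $u$'s up to exactly three undetermined functions. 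These I would \emph{define} to be $v_1,v_2,v_3$, namely the $\om^6$-component of $\der u_3$, the $\om^7$-component of $\der u_4$, and the $\om^6$-component of $\der u_6$, as dictated by the displayed formulas.

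I would then iterate. Having (\ref{dcs1}), write $\der v_i=\sum_{a}q_{ia}\,\om^a$ and impose $\der^2u_j=0$ for $j=1,\dots,6$. Since each $\der u_j$ is now a known $1$-form, linear in the $v_i$ with coefficients polynomial in the $u$'s, the condition $\der^2u_j=0$ is a collection of $2$-form identities; expanding in $\om^a\dz\om^b$ and equating coefficients gives a linear system for the $q_{ia}$. The assertion embodied in (\ref{dcs2}) and the subsequent display is that this system is consistent and determines $\der v_1,\der v_2,\der v_3$ completely in terms of the $u$'s and $v$'s, introducing \emph{no} further functions. The final step is to substitute these expressions back into $\der^2v_i$ and verify that it vanishes identically; this is exactly the statement that the system is totally closed.

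The main obstacle is not conceptual but rather the consistency and termination of this procedure. At each stage the closure conditions constitute an overdetermined linear system, and it is a nontrivial fact that (i) the first-stage system is consistent and leaves precisely the three-dimensional freedom $v_1,v_2,v_3$ (this is where the special normalization (\ref{flco}) is essential), and (ii) the second-stage system for $\der v_i$ is consistent and closes without forcing new functions or new algebraic relations among the $u$'s and $v$'s. Establishing (ii) — in particular the identity $\der^2v_i=0$ — is the delicate part, since any failure there would signal additional integrability constraints or the need for a further prolongation. In practice the whole computation is large but entirely mechanical and is most safely carried out in a symbolic algebra system; the displayed equations (\ref{dcs1}), (\ref{dcs2}) record its output.
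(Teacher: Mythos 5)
Your proposal is correct and follows essentially the same route as the paper: the paper's "short calculation" is precisely the Cartan closure procedure you describe — expand $\der u_j$ and $\der v_i$ in the coframe, impose $\der^2\om^i=0$ and then $\der^2 u_j=0$ to pin down the components (introducing $v_1,v_2,v_3$ as the residual freedom at the first stage), and finally verify $\der^2 v_i=0$ so that the system closes with no further prolongation. Your observations that closure is automatic for $\om^4,\dots,\om^8$ and that the $\om^8$-direction coefficients force the vanishing of the unwanted components match the mechanics of the paper's computation, which, like yours, is ultimately carried out symbolically.
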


Using the relations from the above proposition we get the following result.
\begin{proposition}\label{flapr}
For all choices of the functions $(u_1, u_2,\dots,u_6,v_1,v_2,v_3)$ satisfying the Monge relations (\ref{sysc3fl}), and as a consequence (\ref{dcs1}), (\ref{dcs2}), the corresponding Monge system is locally equivalent to the flat Monge system with $u_1=u_2=\dots=u_6=v_1=v_2=v_3=0$. 
\end{proposition}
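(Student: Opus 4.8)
The plan is to show that, under the constraints (\ref{flco}), the closed exterior differential system (\ref{sysc3fl})--(\ref{dcs2}) is gauge equivalent to the very same system with all invariants set to zero, which is precisely the flat Monge system (\ref{flatc3}). First I would record the structural features that make this possible: the forms $\om^6,\om^7,\om^8$ are closed, and the relations (\ref{dcs1})--(\ref{dcs2}) show that every $\der u_a$ and every $\der v_b$ lies in $\Span(\om^6,\om^7)$. Hence the nine functions $(u_1,\dots,u_6,v_1,v_2,v_3)$ are constant along the integrable $6$-dimensional distribution $\ker\om^6\cap\ker\om^7$, so after choosing $\om^6=\der r$, $\om^7=\der s$ they descend to functions of the two variables $(r,s)$ alone. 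In these variables (\ref{dcs1})--(\ref{dcs2}) becomes a first order system for $(u_a,v_b)(r,s)$ whose compatibility is exactly the assertion that the whole EDS is totally closed, established in the preceding proposition.

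With this in hand, the approach is Cartan's: prolong the reduced $G_0$-structure of Proposition \ref{pr36} to the full $\spa(3,\bbR)$-valued Cartan bundle and show that the associated Cartan connection is flat, i.e. that its curvature vanishes identically on the whole solution family, not merely at $u=v=0$. Equivalently --- and this is the version I would actually carry out --- I would look for a $G_0$-valued gauge $A=A(r,s)$, together with an adjustment of the remaining prolongation parameters, so that the transformed coframe $\bar\om^i=A^i{}_j\om^j$ satisfies (\ref{sysc3fl}) with $\bar u_a=\bar v_b=0$. Writing out $\der\bar\om^i$ and demanding that the torsion coefficients vanish yields an overdetermined first order system for the entries of $A$ in the variables $(r,s)$, acting through the residual parameters $b_5,b_6,b_7,a_{29},a_{38},a_{49},\dots$ of Proposition \ref{pr36}. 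The key point is that the integrability conditions of this system coincide with (\ref{dcs1})--(\ref{dcs2}); the closure therefore guarantees, via the Frobenius theorem, that such a gauge exists locally and transports any admissible $(u_a,v_b)$ to the origin.

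Once the coframe has been gauged so that $\bar u_a=\bar v_b\equiv 0$, its structure equations are (\ref{sysc3fl}) with vanishing invariants, which are the constant-coefficient Maurer--Cartan equations of the flat model; by the standard Cartan uniqueness theorem for coframes with prescribed structure functions the two are then locally equivalent, proving the claim. I expect the main obstacle to be precisely the construction of this trivializing gauge --- that is, verifying that the orbit of the prolonged structure group through a generic point of the invariant space $(u_a,v_b)$ contains the flat point, or equivalently that the full Cartan curvature really vanishes and not merely its harmonic part. Conceptually this is unsurprising, since the conditions (\ref{flco}) are designed to annihilate the harmonic curvature quintic, whence local flatness also follows abstractly from the general theory of parabolic geometries \cite{cap}; but the self-contained Cartan argument requires carrying out the explicit absorption above.
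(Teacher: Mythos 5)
Your strategy is, at bottom, the paper's own: exploit the fact that $\om^6,\om^7,\om^8$ are closed and that, by (\ref{dcs1})--(\ref{dcs2}), all of $\der u_a,\der v_b$ lie in $\Span(\om^6,\om^7)$, then produce a $G_0$-valued gauge killing the invariants, and conclude by rigidity of coframes with constant structure functions. The reduction of the invariants to functions of two variables $(r,s)$ is correct and useful (it is implicit in the paper), and your final step --- once $\bar u_a=\bar v_b\equiv 0$ the structure equations become the Maurer--Cartan equations of the nilpotent symbol algebra, so Cartan's uniqueness theorem for coframes gives the local diffeomorphism to the flat model (\ref{flatc3}) --- is sound, and in fact more economical than the paper's conclusion, which instead prolongs the trivialized coframe to $P\times M$ and identifies it locally with $\spg(3,\bbR)$.

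The gap is in the middle, and it is exactly where all the content of the proposition sits. You reduce existence of the trivializing gauge $A(r,s)\in G_0$ to the claim that ``the integrability conditions of this system coincide with (\ref{dcs1})--(\ref{dcs2})'' and then invoke Frobenius. That claim is not something one can cite; it \emph{is} the theorem. Moreover, Frobenius does not apply off the shelf: the torsion-annihilation equations for $A$ are simultaneously overdetermined and underdetermined (the flat structure has a large residual isotropy inside $G_0$, so the gauge is far from unique), so before any integrability check one must split the system, fix the residual gauge/absorption freedom, and put what remains into total-differential form; only then can one ask whether (\ref{dcs1})--(\ref{dcs2}) imply its compatibility. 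Carrying this out is precisely the computation the paper performs --- except that the paper short-circuits the existence question entirely by \emph{exhibiting} the solution: a matrix $\bar{A}\in G_0$ whose entries are explicit polynomials in $(u_1,\dots,u_6,v_1,v_2,v_3)$, so that no integration is needed at all (a fact your PDE formulation would not reveal), together with thirteen explicit forms $\bar{\Omega}{}_a=B_{ai}\om^i$, followed by the verification, using (\ref{dcs1})--(\ref{dcs2}) to differentiate the entries, that the resulting $21$ forms satisfy the full flat $\spa(3,\bbR)$ system (\ref{syu})--(\ref{syv}). Your fallback via parabolic geometry (\cite{cap}: vanishing harmonic curvature forces vanishing of the whole curvature, hence local flatness) is legitimate in principle, but it imports facts not established at this point of the paper, namely that the structure carries a regular normal Cartan connection whose harmonic curvature is the quintic $T$; as a self-contained proof of Proposition \ref{flapr} it therefore also leaves a hole.
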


\begin{proof}
  Consider a Monge system (\ref{sysc3fl}) defined on an 8-dimensional manifold $M$. We will show that, there exists a choice of functions $\bar{A}{}^i{}_j$ on $M$, which form a matrix $(\bar{A}{}^i{}_j)$ with values in the reduced structure group $G_0$, as in Proposition \ref{pr36}), and a choice of thirteen 1-forms $(\bar{\Omega}{}_1,\bar{\Omega}{}_2,\dots,\bar{\Omega}{}_{13})$ such that the 21 forms $(\bar{\theta}{}^1,\bar{\theta}{}^2,\dots \bar{\theta}{}^8,\bar{\Omega}{}_1,\bar{\Omega}{}_2,\dots,\bar{\Omega}{}_{13})$, with $\bar{\theta}{}^i=\bar{A}{}^i{}_j\omega^j$, satisfy the \emph{flat} EDS (\ref{syu})-(\ref{syv}) on $M$. These forms will be used to define forms satisfying the flat EDS (\ref{syu})-(\ref{syv}) on $\spg(3,\bbR)$.

  The explicit realization of this program is obtained by first defining the matrix $(\bar{A}{}^i{}_j)$ on $M$. This is given by:
   $$(\bar{A}{}^i{}_j)=\bma 1&0&0&0&0&0&0&0\\
  0&1&0&0&0&0&0&0\\
  0&0&1&0&0&0&0&0\\
  \tfrac16(u_6-6u_2)&-\tfrac13u_3&\tfrac12(u_5-2u_1)&1&0&0&0&0\\
  -\tfrac12u_5&-\tfrac12u_6&-\tfrac13u_4&0&1&0&0&0\\
  \tfrac13(u_6-3u_2)&-\tfrac13u_3&u_5-u_1&1&0&1&0&0\\
  0&0&-\tfrac13u_4&0&1&0&1&0\\
  s_1&s_2&s_3&0&0&0&0&1
  \ema,
  $$
where we have introduced the following abreviations:
  $$\begin{aligned}
    &s_1=\tfrac16(6u_2u_5+u_5u_6-2v_3)\\
    &s_2=\tfrac{1}{36}(36u_2^2+12u_3u_5-12u_2u_6+u_6^2-12v_1)\\
    &s_3=\tfrac{1}{36}(24u_2u_4+9u_5^2-4u_4u_6-12v_2), 
\end{aligned}.$$
Note that the matrix $(\bar{A}{}^i{}_j)$ above has values in $G_0$. Thus if we now take a coframe $(\omega^1,\omega^2,\omega^3,\omega^4,\omega^5,\omega^6,\omega^7,\omega^8)$ satisfying
(\ref{sysc3fl}), and a new coframe $(\bar{\theta}{}^1,\bar{\theta}{}^2,\dots \bar{\theta}{}^8)$ on $M$ given by
 %/home/pawel/MEGA/MEGAsync/notebooks/utah/monge_equations/2016_sp3_in8_flat__final1_special_gauge_koniec.nb
  $$\bar{\theta}{}^i=\bar{A}{}^i{}_j\om^j, \quad \mathrm{for}\quad i=1,2,\dots 8,$$
then they both define the same Monge geometry.

We now supplement the eight 1-forms $(\bar{\theta}{}^1,\bar{\theta}{}^2,\dots \bar{\theta}{}^8)$ with thirteen 1-forms $(\bar{\Omega}{}_1,\bar{\Omega}{}_2,\dots,\bar{\Omega}{}_{13})$ given on $M$ by:
  $$\bar{\Om}{}_a=B_{ai}\om^i,\quad \mathrm{for}\quad a=1,2,\dots 13,$$
  with
$$\tiny{\begin{aligned}
  (&B_{ai})=\\&\bma
  0&0&\tfrac14u_5^2&\tfrac12u_5&\tfrac12u_6&-\tfrac12u_5&-\tfrac13u_6&0\\
  B_{21}&\tfrac19u_3u_6&\tfrac16u_5(3u_2-2u_6)&u_2&\tfrac23u_3&\tfrac16(u_6-6u_2)&-\tfrac23u_3&0\\
  0&-\tfrac16u_5u_6&\tfrac13u_4u_5&\tfrac23u_4&u_5&-\tfrac23u_4&-\tfrac12u_5&0\\
  B_{41}&\tfrac{1}{36}(6u_2-u_6)u_6&B_{43}&\tfrac12(4u_1-u_5)&\tfrac16(6u_2-u_6)&u_5-2u_1&\tfrac16(u_6-6u_2)&0\\
  0&0&0&\tfrac12u_5&\tfrac12u_6&0&0&1\\
  B_{61}&B_{62}&B_{63}&B_{64}&B_{65}&B_{66}&B_{67}&\tfrac16u_6\\
  B_{71}&B_{72}&B_{73}&B_{74}&B_{75}&B_{76}&B_{77}&\tfrac12u_5\\
  \tfrac16u_5u_6&\tfrac{1}{36}u_6^2&\tfrac14u_5^2&0&0&-u_5&-\tfrac13 u_6&-1\\
  B_{91}&B_{92}&\tfrac{1}{24}u_5^2u_6&0&B_{95}&\tfrac16u_5u_6&B_{97}&-\tfrac16u_6\\
  B_{10,1}&B_{10,2}&\tfrac{1}{24}u_5(8 u_2 u_4 + 3 u_5^2)&\tfrac23u_2u_4&B_{10,5}&\tfrac19u_4(-6 u_2 + u_6)&B_{10,7}&-\tfrac12u_5\\
  B_{11,1}&B_{11,2}&B_{11,3}&B_{11,4}&B_{11,5}&\tfrac16u_6(-4 u_2 u_5 + v_3)&B_{11,7}&-\tfrac{1}{36}u_6^2\\
  B_{12,1}&B_{12,2}&B_{12,3}&B_{12,4}&B_{12,5}&B_{12,6}&B_{12,7}&-\tfrac{1}{12}u_5u_6\\
   B_{13,1}&B_{13,2}&B_{13,3}&B_{13,4}&B_{13,5}&B_{13,6}&B_{13,7}&-\tfrac{1}{4}u_5^2\\
 \ema.
\end{aligned}}$$
Here we have used the following abreviations:
$$\begin{aligned}
  &B_{21}=\tfrac12(4u_3u_5+2u_2u_6-u_6^2),\quad B_{41}=\tfrac16(3u_2u_5+2u_1u_6-3u_5u_6)\\
  &B_{43}=\tfrac{1}{36}(36u_1u_5-27u_5^2-4u_4u_6)\\
  &B_{61}=\tfrac{1}{36}(3u_2u_5u_6-18u_2^2u_5-6u_3u_5^2-u_5u_6^2+6u_5v_1+u_6v_3)\\
  &B_{62}=\tfrac{1}{216}u_6(12u_2u_6-36u_2^2-12u_3u_5-u_6^2+12v_1), \quad B_{63}=\tfrac{1}{24}u_5(2v_3-6u_2u_5-u_5u_6)\\
  &B_{64}=\tfrac16(v_3-3u_2u_5-2u_5u_6), \quad B_{65}=\tfrac{1}{12}(4v_1-12u_2^2-4u_3u_5+4u_2u_6-u_6^2)\\
  &B_{66}=B_{77}=\tfrac{1}{12}(6u_2u_5+u_5u_6-2v_3), \quad B_{67}=\tfrac{1}{36}(36u_2^2+12u_3u_5-12u_2u_6+u_6^2-12v_1)\\
  &B_{71}=\tfrac{1}{108}(-27 u_2 u_5^2 - 12 u_2 u_4 u_6 - 9 u_5^2 u_6 + 
  2 u_4 u_6^2 + 6 u_6 v_2 + 9 u_5 v_3)\\
  &B_{72}=-\tfrac{1}{72}u_6 (6 u_2 u_5 + u_5 u_6 - 2 v_3)\\
  &B_{73}=-\tfrac{1}{2}u_5 B_{76},\quad\quad  B_{76}=\tfrac{1}{36}(24 u_2 u_4 + 9 u_5^2 - 4 u_4 u_6 - 12 v_2)\\
  &B_{74}=\tfrac16(-4 u_2 u_4 - 3 u_5^2 + 2 v_2),\quad B_{75}=\tfrac16(-3 u_2 u_5 - 2 u_5 u_6 + v_3)\\
  &B_{91}=-\tfrac{1}{36}u_5 (18 u_2^2 + 6 u_3 u_5 - 6 u_2 u_6 - u_6^2 - 6 v_1)\\
  &B_{92}=-\tfrac{1}{216}u_6 (36 u_2^2 + 12 u_3 u_5 - 12 u_2 u_6 - u_6^2 - 
  12 v_1)\\
  &B_{95}=\tfrac13(-3 u_2^2 - u_3 u_5 + u_2 u_6 + v_1),\quad B_{97}=\tfrac{1}{36} (36 u_2^2 + 12 u_3 u_5 - 12 u_2 u_6 + u_6^2 - 12 v_1)\\
  &B_{10,1}=\tfrac{1}{36}(8 u_3 u_4 u_5 - 18 u_2 u_5^2 + 4 u_2 u_4 u_6 + 
  3 u_5^2 u_6 + 6 u_5 v_3)\\
  &B_{10,2}=\tfrac{1}{216}u_6 (16 u_3 u_4 - 36 u_2 u_5 + 3 u_5 u_6 + 12 v_3),\quad B_{10,5}=\tfrac19(4 u_3 u_4 - 9 u_2 u_5 + 3 v_3)\\
  &B_{10,7}=\tfrac{1}{18}(-8 u_3 u_4 + 18 u_2 u_5 + 3 u_5 u_6 - 6 v_3)\\
  &B_{11,1}=\tfrac{1}{216}(180 u_2^2 u_5 u_6 + 72 u_3 u_5^2 u_6 - 
  36 u_2 u_5 u_6^2 - 5 u_5 u_6^3 - 60 u_5 u_6 v_1 - 
  6 u_6^2 v_3)\\
  &B_{11,2}=\tfrac{1}{648}(180 u_2^2 u_6^2 + 72 u_3 u_5 u_6^2 - 60 u_2 u_6^3 - 
  u_6^4 - 60 u_6^2 v_1)\\
  &B_{11,3}=\tfrac{1}{48}(16 u_2 u_5^2 u_6 - 3 u_5^2 u_6^2 - 4 u_5 u_6 v_3),\quad B_{11,4}=\tfrac{1}{12}(8 u_2 u_5 u_6 + u_5 u_6^2 - 2 u_6 v_3)\\
  &B_{11,5}=\tfrac{1}{36}(60 u_2^2 u_6 + 24 u_3 u_5 u_6 - 20 u_2 u_6^2 + 
  u_6^3 - 20 u_6 v_1)
   \end{aligned}$$
$$\begin{aligned}
  &B_{11,7}=\tfrac{1}{108}(-180 u_2^2 u_6 - 72 u_3 u_5 u_6 + 60 u_2 u_6^2 - 
  u_6^3 + 60 u_6 v_1)\\
  &B_{12,1}=\tfrac{1}{648}(1134 u_2^2 u_5^2 + 270 u_3 u_5^3 - 
  48 u_3 u_4 u_5 u_6 - 54 u_2 u_5^2 u_6 + 
  24 u_2 u_4 u_6^2 - 27 u_5^2 u_6^2 -\\&\quad 16 u_4 u_6^3 - 
  378 u_5^2 v_1 - 24 u_6^2 v_2 - 126 u_5 u_6 v_3)\\
 &B_{12,2}=\tfrac{1}{1296}(756 u_2^2 u_5 u_6 + 180 u_3 u_5^2 u_6 - 
 32 u_3 u_4 u_6^2 - 72 u_2 u_5 u_6^2 - 15 u_5 u_6^3 - 
 252 u_5 u_6 v_1 - 60 u_6^2 v_3)\\
 &B_{12,3}=\tfrac{1}{144}(36 u_2 u_5^3 + 16 u_2 u_4 u_5 u_6 + 3 u_5^3 u_6 - 
 12 u_4 u_5 u_6^2 - 16 u_5 u_6 v_2 - 24 u_5^2 v_3)\\
 &B_{12,4}=\tfrac{1}{54}(27 u_2 u_5^2 + 12 u_2 u_4 u_6 + 18 u_5^2 u_6 - 
 5 u_4 u_6^2 - 12 u_6 v_2 - 18 u_5 v_3)\\
 &B_{12,5}=\tfrac{1}{108} (378 u_2^2 u_5 + 90 u_3 u_5^2 - 16 u_3 u_4 u_6 - 
  36 u_2 u_5 u_6 + 9 u_5 u_6^2 - 126 u_5 v_1 - 
  30 u_6 v_3)\\
  &B_{12,6}=\tfrac{1}{18}(-9 u_2 u_5^2 - 4 u_2 u_4 u_6 - 3 u_5^2 u_6 + 
  2 u_4 u_6^2 + 4 u_6 v_2 + 6 u_5 v_3)\\
  &B_{12,7}=\tfrac{1}{216}(-756 u_2^2 u_5 - 180 u_3 u_5^2 + 32 u_3 u_4 u_6 + 
  72 u_2 u_5 u_6 - 3 u_5 u_6^2 + 252 u_5 v_1 + 
  60 u_6 v_3)\\
  &B_{13,1}=\tfrac{1}{216}(144 u_2^2 u_4 u_5 - 24 u_3 u_4 u_5^2 + 
  216 u_2 u_5^3 + 9 u_5^3 u_6 - 24 u_4 u_5 u_6^2 - 
  48 u_4 u_5 v_1 - 36 u_5 u_6 v_2 -\\&\quad\quad 72 u_5^2 v_3 - 
  4 u_4 u_6 v_3)\\
  &B_{13,2}=\tfrac{1}{1296}(288 u_2^2 u_4 u_6 - 48 u_3 u_4 u_5 u_6 + 
  432 u_2 u_5^2 u_6 - 48 u_2 u_4 u_6^2 - 
  27 u_5^2 u_6^2 - 16 u_4 u_6^3 -\\&\quad\quad 96 u_4 u_6 v_1 - 
  24 u_6^2 v_2 - 144 u_5 u_6 v_3)\\
  &B_{13,3}=\tfrac{1}{1296}(432 u_2 u_4 u_5^2 + 81 u_5^4 - 144 u_4 u_5^2 u_6 - 
  16 u_4^2 u_6^2 - 432 u_5^2 v_2 - 72 u_4 u_5 v_3)\\
  &B_{13,4}=\tfrac{1}{18}(12 u_2 u_4 u_5 + 9 u_5^3 - 12 u_5 v_2 - 2 u_4 v_3)\\
  &B_{13,5}=\tfrac{1}{54}(72 u_2^2 u_4 - 12 u_3 u_4 u_5 + 108 u_2 u_5^2 - 
  12 u_2 u_4 u_6 + 27 u_5^2 u_6 - 4 u_4 u_6^2 - 
  24 u_4 v_1 -\\&\quad\quad 6 u_6 v_2 - 36 u_5 v_3)\\
  &B_{13,6}=\tfrac{1}{36}(-24 u_2 u_4 u_5 - 9 u_5^3 + 4 u_4 u_5 u_6 + 
  24 u_5 v_2 + 4 u_4 v_3)\\
  &B_{13,7}=\tfrac{1}{108} (-144 u_2^2 u_4 + 24 u_3 u_4 u_5 - 216 u_2 u_5^2 + 
  24 u_2 u_4 u_6 - 27 u_5^2 u_6 + 8 u_4 u_6^2 +\\&\quad\quad 
  48 u_4 v_1 + 12 u_6 v_2 + 72 u_5 v_3).
     \end{aligned}$$
It is a matter of checking that the so defined system of forms $(\bar{\theta}{}^1,\bar{\theta}{}^2,\dots \bar{\theta}{}^8,\bar{\Omega}{}_1,\bar{\Omega}{}_2,$ $\dots,\bar{\Omega}{}_{13})$ satisfies (\ref{syu})-(\ref{syv}).

One can worry that the flat system (\ref{syu})-(\ref{syv}) is fullfiled by the forms $(\bar{\theta}{}^1,\bar{\theta}{}^2,\dots \bar{\theta}{}^8,$ $\bar{\Omega}{}_1,\bar{\Omega}{}_2,$ $\dots,\bar{\Omega}{}_{13})$ only on $M$.

To see that these forms define also the flat Monge system (\ref{syu})-(\ref{syv}) on $\spg(3,\bbR)$, consider the parabolic subgroup $P$ in $\spg(3,\bbR)$ as defined in Proposition \ref{parap}. This has the Lie agebra $\mathfrak{p}$ consisting of matrices $X$ as in (\ref{sp3r}) with ${\bf a}={\bf p}=p={\bf u}=0$. Let $b$ be a general element from $P$, $b\in P$, in the representation corresponding to the considered representation of $\mathfrak{p}$. Define:
$$
  \bar{\omega}=\bma[c|c||c|c]
  \begin{matrix}\bar{\Om}{}_1&\bar{\Om}{}_2\\\bar{\Om}{}_3&\bar{\Om}{}_4\end{matrix}&\begin{matrix}\bar{\Om}{}_6\\\bar{\Om}{}_7\end{matrix}&\begin{matrix}\bar{\Om}{}_{11}&\bar{\Om}{}_{12}\\\bar{\Om}{}_{12}&\bar{\Om}{}_{13}\end{matrix}&\begin{matrix}\bar{\Om}{}_9\\\bar{\Om}{}_{10}\end{matrix}\\
  \cmidrule(lr){1-4}
  \begin{matrix}\bar{\theta}{}^7&\bar{\theta}{}^6\end{matrix}&\bar{\Om}{}_5&\begin{matrix}\bar{\Om}{}_9&\bar{\Om}{}_{10}\end{matrix}&\bar{\Om}{}_8\\
  \cmidrule(lr){1-4}\morecmidrules\cmidrule(lr){1-4}
  \begin{matrix}\bar{\theta}{}^2&\bar{\theta}{}^1\\\bar{\theta}{}^1&\bar{\theta}{}^3\end{matrix}&\begin{matrix}\bar{\theta}{}^5\\\bar{\theta}{}^4\end{matrix}&\begin{matrix}-\bar{\Om}{}_1&-\bar{\Om}{}_3\\-\bar{\Om}{}_2&-\bar{\Om}{}_4\end{matrix}&\begin{matrix}-\bar{\theta}{}^7\\-\bar{\theta}{}^6\end{matrix}\\
  \cmidrule(lr){1-4}
  \begin{matrix}\bar{\theta}{}^5&\bar{\theta}{}^4\end{matrix}&\bar{\theta}{}^8&\begin{matrix}-\bar{\Om}{}_6&-\bar{\Om}{}_7\end{matrix}&-\bar{\Om}{}_5
      \ema.
      $$
      Then the $\spa(3,\bbR)$-valued form
      $$\omega=b\bar{\omega}b^{-1}-(\der b) b^{-1},$$
      is defined on $P\times M$, satisfies $\der\omega+\omega\dz\omega=0$, and defines a \emph{coframe} of 1-forms $(\theta^1,\theta^2,\dots,\theta^8,\Om_1,$ $\Om_2,\dots,\Om_{13})$ on $P\times M$ via:
$$\omega=\bma[c|c||c|c]
  \begin{matrix}\Om_1&\Om_2\\\Om_3&\Om_4\end{matrix}&\begin{matrix}\Om_6\\\Om_7\end{matrix}&\begin{matrix}\Om_{11}&\Om_{12}\\\Om_{12}&\Om_{13}\end{matrix}&\begin{matrix}\Om_9\\\Om_{10}\end{matrix}\\
  \cmidrule(lr){1-4}
  \begin{matrix}\theta^7&\theta^6\end{matrix}&\Om_5&\begin{matrix}\Om_9&\Om_{10}\end{matrix}&\Om_8\\
  \cmidrule(lr){1-4}\morecmidrules\cmidrule(lr){1-4}
  \begin{matrix}\theta^2&\theta^1\\\theta^1&\theta^3\end{matrix}&\begin{matrix}\theta^5\\\theta^4\end{matrix}&\begin{matrix}-\Om_1&-\Om_3\\-\Om_2&-\Om_4\end{matrix}&\begin{matrix}-\theta^7\\-\theta^6\end{matrix}\\
  \cmidrule(lr){1-4}
  \begin{matrix}\theta^5&\theta^4\end{matrix}&\theta^8&\begin{matrix}-\Om_6&-\Om_7\end{matrix}&-\Om_5
      \ema.
$$
Hence, locally, $P\times M$ is isomorphic to $\spg(3,\bbR)$. Moreover, the 21-linearly indeendent forms $(\theta^1,\theta^2,\dots,\theta^8,\Om_1,\Om_2,\dots,\Om_{13})$ satisfy the flat Monge system on it. 
\end{proof}

\subsection{The main invariant}
We now pass to the general case of the system (\ref{sysc3}) and we will find its first invariant w.r.t. the action of the reduced group $G_0$. 

The procedure here consists in trying to make absorptions and normalizations for the invariant forms $\theta^i=A^i_{~j}\om^j$, with $\om^j$ as in (\ref{sysc3}) and $(A^i_{~j})=A\in G_0$, in such a way that the resulting system is as close to the system (\ref{syu}) for the flat model, as possible. This means, that we will only modify the system (\ref{syu}) by adding to its right hand sides only specific `horizontal' terms of the form $t^i{}_{jk}\theta^j\dz\theta^k$. 

This, in modern language, can be phrased as follows:
\begin{itemize}
\item we try to find a Cartan bundle $P\to{\mathcal G}^{21}\to M$ assocaiated with a Cartan geometry of type $\big(\spg(3,\bbR),P\big)$ proper to our Monge systems
\item we do it by trying to lift the invariant coframe forms $(\theta^i)$ to the searched Cartan bundle $P\to{\mathcal G}^{21}\to M$ in such a way that the lifts are a part of the basis $(\theta^1,\theta^2,\dots,\theta^8,\Om_1,\Om_2,\dots,\Om_{13})$ for an \emph{apropriately normalized} $\spa(3,\bbR)$ Cartan connection $\omega$
\item this connection, in particular, should be such that in the case when $\kappa=\der\omega+\omega\dz\omega=0$ it coincides with the Maurer-Cartan form $\omega$ considered in the previous section
%/home/pawel/notebooks/utah/monge_equations/sp3_in8_only_p_s_group_reduction_reduced.nb
\item the phrases `specific horizontal terms' and `apropriate normalization' above mean, that we want to force the invariant forms $\theta^i=A^i_{~j}\om^j$, $i=1,2,3$ to satisfy the equations below:
\end{itemize}  
\be
\begin{aligned}
\der\theta^1&=\Om_1\dz\theta^1+\Om_2\dz\theta^2+\Om_3\dz\theta^3+\Om_4\dz\theta^1-\theta^4\dz\theta^7-\theta^5\dz\theta^6\\&+\sum_{i=1}^3\sum_{j=i+1}^8t^1_{~ij}\theta^i\dz\theta^j\\
\der\theta^2&=2\Om_1\dz\theta^2+2\Om_3\dz\theta^1-2\theta^5\dz\theta^7+\sum_{i=1}^3\sum_{j=i+1}^8t^2_{~ij}\theta^i\dz\theta^j\\
\der\theta^3&=2\Om_2\dz\theta^1+2\Om_4\dz\theta^3-2\theta^4\dz\theta^6+\sum_{i=1}^3\sum_{j=i+1}^8t^3_{~ij}\theta^i\dz\theta^j,
\end{aligned}\label{syuf3}
\ee
with some functional coefficients $t^i_{~jk}$ - the torsions.

Note that in these equations we allowed only such torsions, which do not destroy terms quadratic in $\theta$s which are preserved byt the group $G_0$. It is why the first sums in 
these expressions have the upper limit equal to 3.

Now we have the following result, which is a pure calculation:

\begin{proposition}\label{proe}
Let
$$
\begin{aligned}
&E_2=\der\theta^2-\Big(2\Om_1\dz\theta^2+2\Om_3\dz\theta^1-2\theta^5\dz\theta^7+\sum_{i=1}^3\sum_{j=i+1}^8t^2_{~ij}\theta^i\dz\theta^j\Big)\\
&E_3=\der\theta^3-\Big(2\Om_2\dz\theta^1+2\Om_4\dz\theta^3-2\theta^4\dz\theta^6+\sum_{i=1}^3\sum_{j=i+1}^8t^3_{~ij}\theta^i\dz\theta^j\Big).
\end{aligned}
$$
Then 
$$E_2\dz\theta^1\dz\theta^2\dz\theta^4\dz\theta^5\dz\theta^7\dz\theta^8= 0\quad
{\rm iff}\quad 
t^2_{~36}=\frac{a_{38}^2T}{a_{29}^3b_7(b_5b_6-1)^3},$$
and
$$E_3\dz\theta^1\dz\theta^3\dz\theta^4\dz\theta^5\dz\theta^6\dz\theta^8= 0\quad 
{\rm iff}\quad t^2_{~27}=\frac{a_{29}^2}{a_{38}^3b_7(b_5b_6-1)^3}T.$$
Here 
$$\begin{aligned}
T=&-\tilde{u}_8+(3\tilde{u}_5-2\tilde{u}_4)b_6+(6\tilde{u}_1-3\tilde{u}_6-\tilde{u}_{12})b_6^2\\&+
(\tilde{u}_7-6\tilde{u}_2+3\tilde{u}_9)b_6^3+(2\tilde{u}_3-3\tilde{u}_{10})b_6^4+\tilde{u}_{11}b_6^5\end{aligned}.$$
\end{proposition}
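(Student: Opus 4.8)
The plan is to read each wedge identity as the extraction of a single quadratic structure coefficient, and then to evaluate that coefficient explicitly from Proposition \ref{pr36} and the structure equations (\ref{sysc3}).

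First I would observe that the six-form $\theta^1\dz\theta^2\dz\theta^4\dz\theta^5\dz\theta^7\dz\theta^8$ is, up to sign, the complement of the slot $\theta^3\dz\theta^6$ inside the volume form $\theta^1\dz\cdots\dz\theta^8$. Hence for any semibasic $2$-form $\sum c_{ij}\theta^i\dz\theta^j$ the wedge with this six-form returns $\pm c_{36}$ times the volume. Applying this to $E_2$, the terms $2\Om_1\dz\theta^2$ and $2\Om_3\dz\theta^1$ are killed (each repeats one of $\theta^1,\theta^2$), the term $2\theta^5\dz\theta^7$ is killed ($\theta^5,\theta^7$ both occur in the six-form), and of the whole torsion sum only $t^2_{~36}\theta^3\dz\theta^6$ survives. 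Thus $E_2\dz\theta^1\dz\theta^2\dz\theta^4\dz\theta^5\dz\theta^7\dz\theta^8=0$ is equivalent to matching $t^2_{~36}$ against the $\theta^3\dz\theta^6$-component of $\der\theta^2$.

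The only genuinely structural point is that this component is purely horizontal, i.e. the vertical ($\der A$) part of $\der\theta^2$ does not contaminate it. Writing $\theta^2=A^2_{~j}\om^j$ with $A\in G_0$ and using that the $(1,2,3)$-block of $A$ is block-triangular, so that $\om^1,\om^2,\om^3$ lie in $\Span(\theta^1,\theta^2,\theta^3)$, the vertical part of $\der\theta^2$ is $(\der A\,A^{-1})^2_{~k}\dz\theta^k$ with $k\in\{1,2,3\}$; after the wedge only $k=3$ could survive. But the $(1,2,3)$-block of $G_0$ is the $\mathrm{Sym}^2$ representation of $\mathbf{GL}(2,\bbR)$ on symmetric $2\times2$ matrices, with $\om^1,\om^2,\om^3$ playing the role of the $(12),(11),(22)$ entries, and in that representation the $(2,3)$ entry of every Lie-algebra element vanishes. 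Hence $(\der A\,A^{-1})^2_{~3}=0$ and the vertical term drops out; this is precisely why it is $\Om_1,\Om_3$, and no other connection forms, that appear in the $\der\theta^2$ line of (\ref{syuf3}). The second equivalence is entirely parallel: the complementary six-form isolates the $\theta^2\dz\theta^7$-slot of $\der\theta^3$, and here the vanishing $(3,2)$ entry of the same $\mathrm{Sym}^2$ action does the job, with the identical invariant $T$ reappearing by the $1\leftrightarrow2$ symmetry of the construction.

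It then remains to compute the horizontal $\theta^3\dz\theta^6$-coefficient. I would substitute the second row of $A$ from Proposition \ref{pr36}, namely $\theta^2=a_{38}^2a_{49}b_7^2(2b_6\om^1+\om^2+b_6^2\om^3)$, differentiate by (\ref{sysc3}), and re-express $\om^1,\om^2,\om^3$ and $\om^6,\om^7$ through the $\theta$'s via $A^{-1}$, whose relevant entries are polynomial in $b_5,b_6,b_7$ over the $\mathrm{Sym}^2$ and $\mathfrak{g}_{-1}$ blocks. The flat terms $-\om^4\dz\om^7,-\om^5\dz\om^6$ only rebuild the flat right-hand side and the vertical pieces, so only the twelve $\tilde u$-terms feed the $\theta^3\dz\theta^6$-slot, and their $b_6$-weights assemble into successive powers, producing exactly the quintic $T$ together with the prefactor $a_{38}^2/(a_{29}^3b_7(b_5b_6-1)^3)$. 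Setting the total coefficient to zero yields the stated $t^2_{~36}$. The main obstacle is purely computational: faithfully re-expressing $\der\theta^2$ in the non-flat coframe $\theta^i=A^i_{~j}\om^j$ and isolating one quadratic slot while tracking the polynomial dependence on $b_5,b_6,b_7$, so that the twelve $\tilde u$-contributions collapse into the degree-five polynomial $T$. A convenient consistency check is that under the flat conditions (\ref{flco}) every coefficient of $T$ vanishes identically, so $T\equiv0$ there, which is exactly what one expects of the harmonic-curvature quintic.
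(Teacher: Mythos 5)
Your proposal is correct and is essentially the paper's own argument: the paper's proof is literally ``by inspection,'' i.e.\ the direct evaluation you describe, and your scaffolding --- wedging with the six-form complementary to the $\theta^3\dz\theta^6$ (resp.\ $\theta^2\dz\theta^7$) slot to kill all $\Om$-terms and all torsion terms but one, plus the observation that the $(2,3)$ and $(3,2)$ entries of $\der A\,A^{-1}$ vanish identically because the upper $3\times3$ block of $G_0$ is the $\mathrm{Sym}^2$ image of $\glg(2,\bbR)$ (which is also why only $\Om_1,\Om_3$ and $\Om_2,\Om_4$ appear in those two structure equations) --- is exactly what makes that inspection legitimate. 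The one step you assert rather than perform (re-expressing $\der\theta^2=a_{38}^2a_{49}b_7^2\,\der(2b_6\om^1+\om^2+b_6^2\om^3)$ through $A^{-1}$ and collapsing the twelve $\tilde u$-contributions into the quintic $T$ with prefactor $a_{38}^2/(a_{29}^3b_7(b_5b_6-1)^3)$) is precisely the computation the paper also leaves unstated, and your consistency check that all six coefficients of $T$ vanish under (\ref{flco}) confirms the bookkeeping; note only that the paper's ``$t^2_{~27}$'' in the second equivalence should be read as $t^3_{~27}$, the $\theta^2\dz\theta^7$-coefficient of $\der\theta^3$, as your argument implicitly and correctly does.
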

\begin{proof}
By inspection.
\end{proof}

We call $T$ the \emph{main invariant of the system} (\ref{sysc3}). Properties of $T$, e.g. properties of its roots, 
when it is considered as a polynomial in variable $b_6$, 
$$T(b_6)=\al_0(\tilde{u}_i)+\al_1(\tilde{u}_i)b_6+\al_2(\tilde{u}_i)b_6^2+\al_3(\tilde{u}_i)b_6^3+\al_4(\tilde{u}_i)b_6^4+\al_5(\tilde{u}_i)b_6^5,$$
yield $G_0$-invariant information about the system 
(\ref{sysc3}). 

Being more specific, note that the identical vanishing of the \emph{quintic} polynomial $T=T(b_6)$, i.e. $T(b_6)= 0$, or what is the same $\al_0(\tilde{u}_i) =\al_1(\tilde{u}_i)=\dots=\al_5(\tilde{u}_i)=0$, is equivalent to the equations
(\ref{flco}) considered in Section \ref{flsec}. This means that when 
performing the equivalence method for systems (\ref{sysc3}) we have two, very different, cases:
\begin{itemize}
\item[A)] either $T$ is identically zero, $T= 0$,
\item[B)] or $T\neq 0$.
\end{itemize}
In the first case, by virtue of Proposition \ref{flapr} modulo the equivalence there is only one such system - the flat system (\ref{sysc3fl}) with all $u_1=u_2=\dots=u_6=0$. In the second case we have more possibilities. They can be distinguished by enumerating roots, and the multiplicities of the roots, of the polynomial $T=T(b_6)$. 

At this stage the following comment is in order. In view of the Tanaka theory \cite{Tan}, it is clear that we can continue our the search of the Cartan connection for the most general Monge system (\ref{sysc3}), and that with the normalizations taken into account so far we will eventually reduce the structure group from $G_0$ to the arabolic $P\spg(3,\bbR)$ obtaining a Cartan connection $\omega$ for a Cartan geometry of type $(\spg(3,\bbR),P)$. But it is also clear that from now on we can split our analysys of Monge systems into the cases that A) the system is flat, B) it is nonflat, with this second case still splitting into subcases corresponding to various root multiplicities of the main invariant $T=T(b_6)$. If we only to enumerate noninvariant classes of the Monge systems, it may more appropraite to find a connection description for each of these subcases separately.

Since the aim of this paper is to describe the Monge systems for which the main invariant $T=T(b_6)$ \emph{has a root with multiplicity equal to five}, we stop the procedure of constructing the $\spa(3,\bbR)$ Cartan connection for all Monge systems (\ref{sysc3}) here, and will only use the fact that such connection exists.

\section{Systems with the main invariant having fivefold multiple root}
In this Section we consider systems (\ref{sysc3}) whose main invariant $T=T(b_6)$ has a \emph{single} root. For such systems, without loss of generality, we
can assume that all the structural functions 
$\tilde{u}_i$ in (\ref{sysc3}) identically vanish, except the structural function $\tilde{u}_{11}\neq 0$. For simplicity of notation we denote this structural function by $u$,
$$\tilde{u}_{11}=u\neq 0.$$
\subsection{Integration of the EDS in this case}
We have the following proposition.
%/home/pawel/notebooks/utah/monge_equations/sp3_in8_only_p_s_group_reduction_reduce_quintic_root.nb
\begin{proposition}
The system (\ref{sysc3}) in which $u_{11}=u\neq 0$ and all other $\tilde{u}_i=0$, satisfies:
\be
\begin{aligned}
\der\om^1&=-\om^4\dz\om^7-\om^5\dz\om^6,\\
\der\om^2&=-2 \om^5\dz\om^7,\\
\der\om^3&=u \om^2\dz\om^7-2 \om^4\dz\om^6,\\
\der\om^4&=\om^6\dz\om^8,\\
\der\om^5&=\om^7\dz\om^8,\\
\der\om^6&=0,\\
\der\om^7&=0,\\
\der\om^8&=0.\\
\end{aligned}\label{sysc3u}
\ee
with
\be\begin{aligned}
&\der u=v_1\om^2+v_2\om^7,\\
&\der v_1=w_1\om^2+w_2\om^7,\\
&\der v_2=w_2\om^2+2v_1\om^5+w_3\om^7,\\
&\der w_1=z_1\om^2+z_2\om^7,\\
&\der w_2=z_2\om^2+2w_1\om^5+z_3\om^7,\\
&\der w_3=z_3\om^2+4w_2\om^5+z_4\om^7+2v_1\om^8,\\
&\der z_1=z_5\om^2+z_6\om^7,\\
&\der z_2=z_6\om^2+2z_1\om^5+z_7\om^7,\\
&\der z_3=z_7\om^2+4z_2\om^5+z_8\om^7+2w_1\om^8,\\
&\der z_4=z_8\om^2+6z_3\om^5+z_9\om^7+6w_2\om^8,
\end{aligned}\label{sysc3u1}\ee
and some functions $v_1,v_2,w_1,w_2,w_3,z_1,z_2,z_3,z_4,z_5,z_6,z_7,z_8,z_9.$
\end{proposition}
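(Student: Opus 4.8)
The plan is to treat the two halves of the statement separately: the structure equations (\ref{sysc3u}) come from a direct substitution, while the prolongation relations (\ref{sysc3u1}) are obtained by systematically closing the exterior differential system, that is, by imposing $\der^2=0$ on each equation in turn. For the first half I would simply put $\tilde u_{11}=u$ and $\tilde u_i=0$ for $i\neq 11$ into the general system (\ref{sysc3}); all the coefficient terms in the first three equations vanish except the displayed ones (only $\tilde u_{11}\om^2\dz\om^7=u\,\om^2\dz\om^7$ survives in $\der\om^3$), and the last five equations are unchanged. This is immediate.

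For the second half the engine is the identity $\der^2=0$, applied repeatedly. I would first differentiate $\der\om^3=u\,\om^2\dz\om^7-2\,\om^4\dz\om^6$. Using $\der\om^2=-2\,\om^5\dz\om^7$, $\der\om^4=\om^6\dz\om^8$ and $\der\om^6=\der\om^7=0$, every term collapses except $\der u\dz\om^2\dz\om^7$, because of the repeated factors $\om^6\dz\om^6$ and $\om^7\dz\om^7$. Hence $\der u\dz\om^2\dz\om^7=0$, and since the six $3$-forms $\om^i\dz\om^2\dz\om^7$ with $i\in\{1,3,4,5,6,8\}$ are linearly independent, this forces $\der u=v_1\om^2+v_2\om^7$ for some functions $v_1,v_2$, which is the first line of (\ref{sysc3u1}).

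The remaining lines are produced by iterating this one move. Given a relation $\der f=\sum_k g_k\,\om^{i_k}$ with indices $i_k\in\{2,5,7,8\}$, I would differentiate it, expand each $\der g_k$ in the full coframe $(\om^1,\dots,\om^8)$, and collect the coefficient of every basis $2$-form; setting each coefficient to zero both kills the unwanted components of the $\der g_k$ and forces certain surviving ones to coincide. The nonzero structure equations $\der\om^2=-2\,\om^5\dz\om^7$ and $\der\om^5=\om^7\dz\om^8$ are precisely what generate the $\om^5$- and $\om^8$-source terms. For example, $\der^2u=0$ reads $\der v_1\dz\om^2-2v_1\,\om^5\dz\om^7+\der v_2\dz\om^7=0$: its $\om^5\dz\om^7$-part forces the summand $2v_1\om^5$ in $\der v_2$, its $\om^2\dz\om^7$-part identifies the $\om^7$-coefficient of $\der v_1$ with the $\om^2$-coefficient of $\der v_2$ (the shared function $w_2$), and its other parts annihilate every remaining component, giving exactly $\der v_1=w_1\om^2+w_2\om^7$ and $\der v_2=w_2\om^2+2v_1\om^5+w_3\om^7$. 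One level deeper, the contribution $2v_1\der\om^5=2v_1\,\om^7\dz\om^8$ coming from the freshly created $2v_1\om^5$ summand is what forces the term $2v_1\om^8$ in $\der w_3$, and the same mechanism, now with two reinforcing contributions, produces the integer multipliers $4w_2$, $6z_3$ and the $\om^8$-terms $2w_1\om^8$, $6w_2\om^8$ at the $w$- and $z$-levels.

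I expect the only real difficulty to be the bookkeeping rather than any conceptual point. One must organize the prolongation strictly level by level --- $u$; then $v_1,v_2$; then $w_1,w_2,w_3$; then $z_1,\dots,z_4$ --- and verify at each stage that the closure constraints reproduce exactly the sharing pattern and integer coefficients recorded in (\ref{sysc3u1}), checking in particular that the constraint on $\der w_2$ forced by $\der^2 v_2=0$ is consistent with the expression already found from $\der^2 v_1=0$, and similarly for the overlapping $z$-relations. The functions $z_5,\dots,z_9$ enter as the genuinely new quantities at the last displayed level, and since the proposition asserts no more than the ten relations shown, no termination argument is needed here.
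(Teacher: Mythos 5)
Your proposal is correct and follows essentially the same route as the paper: the paper's proof likewise obtains (\ref{sysc3u1}) by expanding the differentials of $u$, $v_1$, $v_2$ (and, implicitly, the subsequent $w$- and $z$-level functions) in the coframe $(\om^1,\dots,\om^8)$ and closing the system under the conditions $\der^2=0$. Your write-up is in fact slightly more explicit than the paper's, since you correctly pinpoint $\der^2\om^3=0$ as the source of the initial constraint $\der u\dz\om^2\dz\om^7=0$ and carry the level-by-level bookkeeping through the $z$-relations.
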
 
\begin{proof}
The proof consists on writing the differential of $u$, $\der u$, in the basis of forms $\om^1,\om^2,\dots,\om^8$, then using conditions $\der^2u= 0$, and then by decomposing the differentials of the remaining two functions $v_1$ and $v_2$ onto the basis of $\omega^i$s, and requiring $\der^2v_1=\der^2v_2= 0$. The system from the proposition is implied by, and guarantees that $\der^2u=\der^2v_1=\der^2v_2= 0$.   
\end{proof}

Let us now define the basic invariants $I_1,I_2,I_3$ and $R$ of the system (\ref{sysc3u}). These are
\be
\begin{aligned}
&I_1=5v_2^2-4uw_3\\
&I_2=v_1\\
&I_3=3v_1v_2-2uw_2\\
&R=4v_1^2-3uw_1.
\end{aligned}\label{in1}
\ee
The reason for the term `basic invariants' for these quantities will be evident later.

Using $I_1,I_2,I_3$ and $R$ we also define 
\be
\begin{aligned}
&t^4_{~12}=-\frac{a_{38}^3}{8u^4a_{29}^4}I_3\\
&t^4_{~15}=\frac{a_{38}}{6u^2a_{29}^2}I_2+\frac{a_{38}^4}{16u^4a_{29}^4}I_1\\
&t^4_{~17}=\frac{a_{38}^4}{16u^4a_{29}^4}I_1\\
&r_{212}=-\frac{a_{38}^2}{9u^4a_{29}^4}R.
\end{aligned}\label{in2}
\ee
It is instructive to \emph{integrate} the EDS (\ref{sysc3u}).
%/home/pawel/MEGA/MEGAsync/notebooks/utah/monge_equations/2016_solve_new.nb
\begin{theorem}\label{exa}
  Every Monge structure with the main invariant $T(b_6)$ having root of multiplicity five admits local coordinates $(y_1,y_2,\dots,y_8)$ in which the forms $(\om^1,\om^2,\dots,\om^8)$ satisfying (\ref{sysc3u}) read:
  \be\begin{aligned}
   \om^1=&\der y_8-y_5\der y_2+y_3\der y_4,\\
   \om^2=&\der y_7-2y_4\der y_2,\\
   \om^3=&\der y_6+2y_3\der y_5-\tfrac12 h_{22}\der y_7,\\
   \om^4=&\der y_5-y_1\der y_3,\\
   \om^5=&\der y_4-y_1\der y_2,\\
   \om^6=&\der y_3,\\
   \om^7=&\der y_2,\\
   \om^8=&\der y_1.
    \end{aligned}
  \label{systemu}
  \ee
  Here the function $h=h(y_2,y_7)$ is a differentiable function of its variables $y_2,y_7$. It is related to the variable $u$ via:
  $$u=\tfrac12 h_{222}\neq 0,$$
  where as usual we have used $h_{22}=\frac{\partial^2 h}{\partial y_2\partial y_2}$ and $h_{222}=\frac{\partial^3 h}{\partial y_2\partial y_2\partial y_2}$.

  In this coordinate system the derived quantities $v_1,v_2,w_1,w_2,w_3,z_1,z_2,z_3,z_4,z_5,z_6,z_7,z_8,z_9$ read:
  \be\begin{aligned}
  v_1=&u_7=\tfrac12 h_{2227},\\
  v_2=&Du=\tfrac12(h_{2222}+2y_4h_{2227}),\\
  w_1=&u_{77}=\tfrac12h_{22277},\\
  w_2=&(Du)_7=\tfrac12(h_{22227}+2y_4h_{22277}),\\
  w_3=&D^2u+2y_1u_7=\tfrac12(h_{22222}+4y_4h_{22227}+4y_4^2h_{22277}+2y_1h_{2227}),\\
  z_1=&u_{777}=\tfrac12h_{222777},\\
  z_2=&(Du)_{77}=\tfrac12(h_{222277}+2y_4h_{222777}),\\
  z_3=&\big(D^2u+2y_1u_7\big)_7=\tfrac12(h_{222227}+4y_4h_{222277}+4y_4^2h_{222777}+2y_1h_{22277}),\\
  z_4=&D^3u+2y_1(2Du+u)_7=\tfrac12(h_{222222}+6y_4h_{222227}+12y_4^2h_{222277}+6y_1h_{22227}+\\&8y_4^3h_{222777}+12y_1y_4h_{22277}),\\
  z_5=&u_{7777}=\tfrac12h_{2227777},\\
  z_6=&(Du)_{777}=\tfrac12(h_{2222777}+2y_4h_{2227777}),\\
  z_7=&\big(D^2u+2y_1u_7\big)_{77}=\tfrac12(h_{2222277}+4y_4h_{2222777}+4y_4^2h_{2227777}+2y_1h_{222777}),\\
  z_8=&\big(D^3u+6y_1(Du)_7\big)_7=\tfrac12(h_{2222227}+6y_4h_{2222277}+12y_4^2h_{2222777}+6y_1h_{222277}+\\&8y_4^3h_{2227777}+12y_1y_4h_{222777}),\\
  z_9=&D^4u+\big(10y_2D^2u+2y_1Du+12y_1^2u_7\big)_7=\tfrac12(h_{2222222}+8y_4h_{2222227}+\\&24y_4^2h_{2222277}+12y_1h_{222227}+32y_4^3h_{2222777}+48y_1y_4h_{222277}+16y_4^4h_{2227777}+\\&48y_1y_4^2h_{222777}+12y_1^2h_{22277}).
  \end{aligned}\ee
  The basic invariants $I_1,I_2,I_3$ and $R$ are
  $$\begin{aligned}
  I_1=&\tfrac14\Big(5h_{2222}^2-4h_{222}h_{22222}-2y_1h_{222}h_{2227}+4y_4(5h_{2227}h_{2222}-4h_{222}h_{22227})+\\&4y_4^2(5h_{2227}^2-4h_{222}h_{22277})\Big),\\
  I_2=&\tfrac12h_{2227},\\
  I_3=&\tfrac14\Big(3h_{2227}h_{2222}-2h_{222}h_{22227}+2y_4(3h_{2227}^2-2h_{222}h_{22277})\Big),\\
  R=&\tfrac14(4h_{2227}^2-3h_{222}h_{22277}).
  \end{aligned}$$
\end{theorem}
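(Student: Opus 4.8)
The plan is to \emph{integrate} the closed exterior differential system (\ref{sysc3u})--(\ref{sysc3u1}) directly. By the preceding Proposition a Monge structure whose main invariant $T(b_6)$ has a fivefold root satisfies these equations with $\tilde u_{11}=u\neq 0$ and all other structure functions vanishing, so I take (\ref{sysc3u})--(\ref{sysc3u1}) as the starting point. The coordinates $(y_1,\dots,y_8)$ are then built one at a time by iterated use of the Poincaré lemma, beginning with the forms that are already closed and working upward through the filtration; at each stage I subtract from the next $\om^i$ an explicit correction built from the coordinates found so far, arranging the remainder to be closed and hence locally exact.

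First I would integrate the three closed bottom forms: since $\der\om^6=\der\om^7=\der\om^8=0$, Poincaré gives $\om^8=\der y_1$, $\om^7=\der y_2$, $\om^6=\der y_3$. Next, $\der\om^5=\om^7\dz\om^8=\der y_2\dz\der y_1$ shows $\om^5+y_1\der y_2$ is closed, so it equals $\der y_4$ and $\om^5=\der y_4-y_1\der y_2$; likewise $\der\om^4=\om^6\dz\om^8$ yields $\om^4=\der y_5-y_1\der y_3$. Continuing upward, $\der\om^2=-2\om^5\dz\om^7$ makes $\om^2+2y_4\der y_2$ closed, producing $y_7$ with $\om^2=\der y_7-2y_4\der y_2$; and a short expansion of $\der\om^1=-\om^4\dz\om^7-\om^5\dz\om^6$ (in which the two $y_1$-terms cancel) shows $\om^1+y_5\der y_2-y_3\der y_4$ is closed, producing $y_8$ with $\om^1=\der y_8-y_5\der y_2+y_3\der y_4$. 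Functional independence of the $y_i$ is automatic: at every stage $\der y_{\mathrm{new}}$ together with the previously found differentials spans the same space as the corresponding linearly independent $\om$'s, so in the end $\der y_1,\dots,\der y_8$ are independent.

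The decisive step is $u$ and $\om^3$. From $\der u=v_1\om^2+v_2\om^7$ and the expressions just found for $\om^2,\om^7$, the differential $\der u$ lies in $\Span(\der y_2,\der y_7)$, so $u$ is a function of $y_2,y_7$ alone. Integrating $u$ three times in $y_2$ therefore produces a potential $h=h(y_2,y_7)$ with $u=\tfrac12 h_{222}$. With this $h$ one computes $\der\!\big(-\tfrac12 h_{22}\der y_7\big)=\tfrac12 h_{222}\,\der y_7\dz\der y_2=u\,\der y_7\dz\der y_2$, which is precisely the term in $\der\om^3=u\,\om^2\dz\om^7-2\om^4\dz\om^6=u\,\der y_7\dz\der y_2-2\,\der y_5\dz\der y_3$. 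Hence $\om^3-2y_3\der y_5+\tfrac12 h_{22}\der y_7$ is closed, giving the last coordinate $y_6$ and the formula $\om^3=\der y_6+2y_3\der y_5-\tfrac12 h_{22}\der y_7$. This establishes (\ref{systemu}), with $u=\tfrac12 h_{222}\neq 0$ as the only residual datum.

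The remaining content is the identification of the derived quantities. I would obtain these by differentiating $u$ repeatedly and re-expanding in the coframe: writing $\der u,\der v_1,\der v_2,\dots$ in the $y$-coordinates and comparing with (\ref{sysc3u1}) expresses $v_1,v_2,w_1,\dots,z_9$ as the successive $\partial_{y_7}$- and total-derivative ($D$) derivatives of $u$, i.e.\ as the displayed polynomials in the partials $h_{22\cdots}$ with coefficients in $y_1,y_4$; substituting these into the definitions (\ref{in1}) then gives the stated forms of $I_1,I_2,I_3,R$. I expect the main obstacle to be organizational rather than conceptual: maintaining consistency in this tower of derivatives and verifying that the single function $h(y_2,y_7)$ reproduces \emph{all} of (\ref{sysc3u1}) with no further differential constraint forced on $h$. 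That no such constraint appears is guaranteed in principle by the closure $\der^2\equiv 0$ proved in the preceding proposition, but the exact numerical coefficients in the $z_i$ and in $I_1,\dots,R$ must still be pinned down by carrying the bookkeeping through explicitly.
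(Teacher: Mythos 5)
Your proposal is correct and follows essentially the same route as the paper's proof: integrate the closed forms $\om^6,\om^7,\om^8$ first, then work up the filtration via the Poincar\'e lemma to produce $y_4,y_5,y_7,y_8$, deduce $u=u(y_2,y_7)$, introduce the potential $h$ with $u=\tfrac12 h_{222}$ to integrate $\om^3$, and finish by mechanical differentiation against (\ref{sysc3u1})--(\ref{in1}). The only (immaterial) difference is that you read off $u=u(y_2,y_7)$ from $\der u=v_1\om^2+v_2\om^7$, whereas the paper obtains it as the integrability condition $\der u\dz\der y_2\dz\der y_7=0$ coming from $\der^2\om^3=0$; both arguments are valid and equivalent.
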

\begin{proof}
  The last three equations (\ref{sysc3u}) ensure that there exists functions $(y_1,y_2,y_3)$ on $M$ such that
  $$\om^8=\der y_1,\quad\om^7=\der y_2,\quad\om^6=\der y_3.$$
  Because of the independence condition
  \be \om^1\dz\om^2\dz\dots\om^8\neq 0,\label{ic}\ee
  we have $$\der y_1\dz\der y_2\dz\der y_3\neq 0.$$
  Using this we see that the 5th equation (\ref{sysc3u}) becomes:
  $$\der\om^5+\der y_1\dz\der y_2=0.$$
  This can be written as:
  $$\der\Big( \om^5+y_1\der y_2\Big)=0,$$
  so that one can find a function $y_4$ on $M$ such that
  $$\om^5+y_1\der y_2=\der y_4.$$
  This means that
  $$\om^5=\der y_4-y_1\der y_2,$$
  and
  $$\der y_1\dz\der y_2\dz\der y_3\dz\der y_4\neq 0,$$
  due to the independence condition (\ref{ic}).

  Similarly the 4th equation (\ref{sysc3u}) becomes:
  $$\der\om^4+\der y_1\dz\der y_3=0.$$
  Writing this as
  $$\der\Big( \om^4+y_1\der y_3\Big)=0,$$
 we get another function $y_5$ on $M$ such that
 $$\om^4=\der y_5-y_1\der y_3,$$
  and
  $$\der y_1\dz\der y_2\dz\der y_3\dz\der y_4\dz\der y_5\neq 0.$$
  In a similar way we integrate the second and the first equations (\ref{sysc3u}) which become:
  $$0=\der\om^2-2\der y_2\dz\der y_4=\der\Big(\om^2+2y_4\der y_2\Big),$$
  and
  $$0=\der\om^1-\der y_2\dz\der y_5-\der y_3\dz\der y_4=\der\Big(\om^1+y_5\der y_2-y_3\der y_4\Big).$$
  We thus obtain
  $$\begin{aligned}
    \om^2=&\der y_7-2y_4\der y_2,\\
    \om^1=&\der y_8-y_5\der y_2+y_3\der y_4,
    \end{aligned}$$
  with
  $$\der y_1\dz\der y_2\dz\der y_3\dz\der y_4\dz\der y_5\dz\der y_7\dz\der y_8\neq 0.$$
  The last equation to be solved is the third equation (\ref{sysc3u}). In view of the above this read:
  \be\der\om^3-2\der y_3\dz\der y_5+u\der y_2\dz\der y_7=0.\label{o3}\ee
  This equation has a differential consequence. Indeed, applying $\der$ on both sides of it, we get that:
  $$\der u\dz\der y_2\dz\der y_7=0.$$
  This means that the function $u$ defining the Monge structure is functionally dependent on the functions $y_2$ and $y_7$, 
  $$u=u(y_2,y_7).$$
  For further convenience, without loss of generality, we take
  $$u=\frac12h_{222},\quad\mathrm{with}\quad h=h(y_2,y_7).$$ 
  With this notation equation (\ref{o3}), becomes
  $$0=\der\Big(\om^3-2y_3\der y_5+\tfrac12 h_{22}\der y_7\Big).$$
  Thus we have
  $$\om^3=\der y_6+2y_3\der y_5-\tfrac12 h_{22}\der y_7,$$
  with $$\der y_1\dz\der y_2\dz\der y_3\dz\der y_4\dz\der y_5\dz\der y_6\dz\der y_7\dz\der y_8\neq 0.$$
  This solves the equations (\ref{sysc3u}) introducing, as a byproduct, a coordinate system $(y_1,y_2,\dots,y_8)$ on $M$.

  The rest of the proof consists in the use of the definitions (\ref{sysc3u1})-(\ref{in1}) and the rules of differentiation.
\end{proof}
%%There should be an example with u=1/2h''', where h=h(x2) only. This is integerated to the very end giving the Monge equations in the file
%/home/pawel/MEGA/MEGAsync/notebooks/utah/monge_equations/2016_solve_new_distribution_1.nb

Although in Theorem \ref{exa} we integrated the Monge EDS (\ref{sysc3u}) and found local representation of the Monge system coframe $(\om^1,\om^2,\dots,\om^8)$ it is not particularly easy to find explicit expressions for the corresponding Monge equations (\ref{msc3}). There is however a subclass of Monge systems (\ref{systemu}), given by a particular choice of a class of functions $h=h(y_2,y_7)$, for which the corresponding Monge equations can be written explicitly. This is described by the following proposition.
\begin{proposition}\label{exu1}
Monge system (\ref{systemu}) in which the function $h=h(y_2,y_7)$ does not depend on the coordinate $y_7$, $h_7=0$, is equivalent to the system defined in terms of Monge ODEs
 $$\dot{z}_{11}=\dot{x}_1^2,\quad \dot{z}_{12}=\dot{x}_1\dot{x}_2,\quad z_{22}=\dot{x}_2^2+h(\dot{x}_1),\quad\quad h_{\dot{x}_1\dot{x}_1\dot{x}_1}\neq 0.$$
  This system has $u\neq 0$ iff $h^{(3)}\neq 0$. Its basic invariants are:
  $$I_1=\tfrac14\Big(5h^{(4)}{}^2-4h^{(3)}h^{(5)}\Big),\quad I_2=I_3=R=0.$$
  \end{proposition}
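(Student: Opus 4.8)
The plan is to realise the concrete ODEs as a system (\ref{sysc3u}) and then quote Theorem \ref{exa}. First I would form the distribution $\mathcal{D}=\Span(X,X_1,X_2)$ of Section 3 for $f_{11}=\dot{x}_1^2$, $f_{12}=\dot{x}_1\dot{x}_2$, $f_{22}=\dot{x}_2^2+h(\dot{x}_1)$, and compute the derived fields $Y_i=[X_i,X]$ and $Z_{ij}=[Y_i,X_j]$. A short calculation gives $Z_{11}=-2\partial_{z_{11}}-h''\partial_{z_{22}}$, $Z_{12}=-\partial_{z_{12}}$, $Z_{22}=-2\partial_{z_{22}}$, so the nonsingularity condition (\ref{ns}) holds identically with determinant $4$, independently of $h$. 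Evaluating the third-order commutators (\ref{cr}), I expect all of them to vanish except $[Z_{11},X_1]=h'''(\dot{x}_1)\,\partial_{z_{22}}=-\tfrac12 h'''(\dot{x}_1)\,Z_{22}$, since every mixed derivative $f_{ij,112}$, $f_{ij,122}$, $f_{ij,222}$ is zero. Passing to the normalized frame $(E_1,\dots,E_8)$ and reading the structure relations that define the $\tilde{u}_i$, this forces every $\tilde{u}_i$ to vanish except $\tilde{u}_{11}=\tfrac12 h'''(\dot{x}_1)$. Hence the coframe dual to $(E_1,\dots,E_8)$ satisfies precisely (\ref{sysc3u}) with $u=\tfrac12 h'''(\dot{x}_1)$, which is nonzero exactly when $h^{(3)}\neq0$.

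With the system placed in the family (\ref{sysc3u}), I would identify its coframe with the normal form (\ref{systemu}) of Theorem \ref{exa}. Inverting the frame gives at once $\om^8=-\der t$, $\om^7=\der\dot{x}_1$, $\om^6=\der\dot{x}_2$, $\om^5=\der x_1-\dot{x}_1\,\der t$ and $\om^4=\der x_2-\dot{x}_2\,\der t$, which match (\ref{systemu}) under $y_1=-t$, $y_2=\dot{x}_1$, $y_3=\dot{x}_2$, $y_4=x_1-t\dot{x}_1$, $y_5=x_2-t\dot{x}_2$. The remaining forms involve the fibre coordinates $z_{ij}$; for instance $\om^2=-\der z_{11}+2\dot{x}_1\,\der x_1-\dot{x}_1^2\,\der t$ equals $\der y_7-2y_4\,\der y_2$ with $y_7=-z_{11}+2x_1\dot{x}_1-t\dot{x}_1^2$, and $\om^1,\om^3$ are treated in the same way to produce $y_8,y_6$. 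The decisive observation is that $\om^7=\der\dot{x}_1=\der y_2$, so the argument of $h$ is exactly the coordinate $y_2$; since $u=\tfrac12 h'''(\dot{x}_1)=\tfrac12 h_{222}$ depends on $y_2$ alone, the function $h$ of Theorem \ref{exa} may be taken independent of $y_7$, that is $h_7=0$, and it agrees with the $h$ of the present proposition up to a polynomial of $y_2$-degree at most two (which changes neither $u$ nor any higher derivative). This gives the asserted local equivalence.

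The basic invariants then follow by substituting $h_7=0$ into the closed expressions recorded in Theorem \ref{exa}. Every derivative of $h$ bearing an index $7$ vanishes, so $I_2=\tfrac12 h_{2227}=0$, $I_3=0$ and $R=\tfrac14(4h_{2227}^2-3h_{222}h_{22277})=0$, while $I_1$ collapses to $\tfrac14(5h_{2222}^2-4h_{222}h_{22222})$. Reading $\partial_{y_2}$ as differentiation in $\dot{x}_1$, this is $I_1=\tfrac14\big(5h^{(4)}{}^2-4h^{(3)}h^{(5)}\big)$, as claimed.

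The only genuine labour is the bracket bookkeeping of the first paragraph: one must confirm that the three mixed commutators in (\ref{cr}) really do vanish for this particular $f_{ij}$ and that the single surviving bracket carries the coefficient $-\tfrac12 h'''$, for it is this step alone that pins the concrete ODEs to the one-function family (\ref{sysc3u}). Once that is secured, both the equivalence and the invariants reduce to matching coordinates and specialising the formulas of Theorem \ref{exa} to $h_7=0$.
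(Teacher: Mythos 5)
Your proposal is correct, but it runs the equivalence in the direction opposite to the paper's proof, with genuinely different intermediate work. The paper starts from the normal form (\ref{systemu}) with $h_7=0$, makes the explicit change of variables $(y_1,\dots,y_8)\mapsto(z_{11},z_{12},z_{22},x_1,x_2,\dot{x}_1,\dot{x}_2,t)$, and then exhibits an explicit lower-triangular $G$-valued transformation carrying $(\om^1,\dots,\om^8)$ to the canonical jet coframe $\der z_{12}-\dot{x}_1\dot{x}_2\,\der t$, $\der z_{11}-\dot{x}_1^2\,\der t$, $\der z_{22}-\big(\dot{x}_2^2+h(\dot{x}_1)\big)\der t,\dots$, from which the ODEs are read off; no bracket computation appears, and the displayed invariants are left to follow from Theorem \ref{exa}. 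You instead start from the ODEs, build the Section 3 frame, and show by bracket bookkeeping that all structure functions vanish except $\tilde{u}_{11}=\tfrac12 h'''$ (your $Z_{ij}$, the determinant $4$ in (\ref{ns}), and the single surviving commutator in (\ref{cr}) are all correct), so the dual coframe satisfies (\ref{sysc3u}) with $u=\tfrac12 h'''$; you then re-integrate to the normal form — your coordinates agree with the paper's up to signs ($y_1=-t$ versus $y_1=t$, etc.) — and specialize Theorem \ref{exa} to $h_7=0$. Since equivalence is symmetric, either direction proves the proposition; your direction makes the claims $I_2=I_3=R=0$, $I_1=\tfrac14\big(5(h^{(4)})^2-4h^{(3)}h^{(5)}\big)$ and ``$u\neq0$ iff $h^{(3)}\neq0$'' completely mechanical, at the cost of the bracket computation the paper's direction avoids, and your quadratic-ambiguity remark correctly disposes of the residual freedom in $h$. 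One caution on ``$\om^1,\om^3$ are treated in the same way'': the existence of $y_8$, i.e.\ exactness of $\om^1+y_5\der y_2-y_3\der y_4$, requires the normalization $E_1=Z_{12}$ rather than the $E_1=\tfrac12 Z_{12}$ printed in Section 3 (the printed factor is inconsistent with the relations $[E_5,E_6]=[E_4,E_7]=E_1$ and with (\ref{sysc3})); this does not affect $\tilde{u}_{11}$, your formula for $\om^2$, or any conclusion, but it is the one spot where the dualization must be done with the consistent choice.
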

\begin{proof}
  We first perform a change of coordinates
  $$(y_1,y_2,\dots,y_8)\mapsto (z_{11},z_{12},z_{22},x_1,x_2,\dot{x}_1,\dot{x}_2,t)$$
  given by:
  $$\begin{aligned}
    &y_1=t,\quad y_2=\dot{x}_1,\quad y_3=\dot{x_2},\quad y_4=-x_1+t\dot{x}_1,\\&y_5=-x_2+t\dot{x}_2,\quad y_6=z_{22}+t\dot{x}_1h'-t\dot{x}_2^2- th-x_1h',\\& y_7=z_{11}-2x_1\dot{x}_1+t\dot{x}_1^2,\quad y_8=z_{12}-x_2\dot{x}_1.
  \end{aligned}$$
  This brings the coframe 1-forms (\ref{systemu}) into a rather ugly form. However, it turns out that, once rewritten in this way, coframe $(\om^1,\om^2,\dots,\om^8)$ can be easilly transformed by a Monge equivalence of the form
  $$
\bma \om^1\\\om^2\\\om^3\\\om^4\\\om^5\\\om^6\\\om^7\\\om^8\ema\to\bma \tilde{\om}{}^1\\\tilde{\om}{}^2\\\tilde{\om}{}^3\\\tilde{\om}{}^4\\\tilde{\om}{}^5\\\tilde{\om}{}^6\\\tilde{\om}{}^7\\\tilde{\om}{}^8\ema=
\bma a_{1}&a_{2}&a_{3}&a_4&a_5&0&0&0\\
 a_{6}&a_{7}&a_{8}&a_9&a_{10}&0&0&0\\
a_{11}&a_{12}&a_{13}&a_{14}&a_{15}&0&0&0\\
a_{16}&a_{17}&a_{18}&a_{19}&a_{20}&0&0&0\\
a_{21}&a_{22}&a_{23}&a_{24}&a_{25}&0&0&0\\
a_{26}&a_{27}&a_{28}&a_{29}&a_{30}&a_{31}&a_{32}&a_{33}\\
a_{34}&a_{35}&a_{36}&a_{37}&a_{38}&a_{39}&a_{40}&a_{41}\\
a_{42}&a_{43}&a_{44}&a_{45}&a_{46}&a_{47}&a_{48}&a_{49}
\ema
\bma \om^1\\\om^2\\\om^3\\\om^4\\\om^5\\\om^6\\\om^7\\\om^8\ema,
$$
into a coframe in the canonical Monge ODE form. Indded we have:
$$
\begin{aligned}
  \tilde{\om}^1 &=\om^1-\dot{x}_1\om^4-\dot{x}_2\om^5=\der z_{12}-\dot{x}_1\dot{x}_2\der t,\\
\tilde{\om}^2& =\om^2-2\dot{x}_1\om^5=\der z_{11}-\dot{x}_1^2\der t,\\
\tilde{\om}^3& =\om^3+\tfrac12h''(\dot{x}_1)\om^2-2\dot{x}_2\om^4-h'(\dot{x}_1)\om^5=\der z_{22}-\big(\dot{x}_2^2+h(\dot{x}_1)\big)\der t,\\
\tilde{\om}^4&=-\om^4=
\der x_2-\dot{x}_2\der t,\\
\tilde{\om}^5&=-\om^5=
\der x_1-\dot{x}_1\der t,\\
\tilde{\om}^6&=\om^6=\der \dot{x}_2,\\
\tilde{\om}^7&=\om^7=\der \dot{x}_1,\\
\tilde{\om}^8&=\om^8=\der t.
  \end{aligned}
$$
Thus the new coordinates $(z_{11},z_{12},z_{22},x_1,x_2,\dot{x}_1,\dot{x}_2,t)$ on $M$ are the canonical `jet' coordinates in which the coframe $(\tilde{\om}{}^2,\tilde{\om}{}^2,\dots,\tilde{\om}{}^8)$, Monge equivalent to the initial $(\om^1,\om^2,\dots,\om^8)$, defines the Monge ODEs:
$$z_{11}=\dot{x}_1^2,\quad z_{12}=\dot{x}_1\dot{x}_2,\quad z_{22}=\dot{x}_2^2+h(\dot{x}_1).$$
Note that we need $h^{(3)}(\dot{x}_1)\neq 0$ to have $u\neq 0$.
\end{proof}

\subsection{Reduction and the principal connection. Invariants $I_1$, $I_2$, $I_3$ and $R$}
The idea now is to describe the considered geometries in terms of a certain subset ${\mathcal G}^{11}$ of the Cartan bundle $P\to{\mathcal G}^{21}\to M$ assocaiated with a Cartan geometry of type $\big(\spg(3,\bbR),P\big)$ proper to our Monge systems. The subset ${\mathcal G}^{11}$ will be defined by taking apropriate well defined \emph{normalizations} of the corresponding $\spa(3,\bbR)$ Cartan connection. Contrary to the flat model, these normalizations are possible, since we will now know that certain curvature functions for this connection are \emph{not} zero, due to $u\neq 0$.

We have the fundamental theorem.
%/home/pawel/MEGA/MEGAsync/notebooks/utah/monge_equations/2016_cartan_connection_minimal_torsion_new.nb
%/home/pawel/MEGA/MEGAsync/notebooks/utah/monge_equations/2016_cartan_connection_minimal_torsion_new_paper.nb
\begin{theorem}\label{ft}
  Consider a Monge system on an 8-dimensional manifold $M$ with the main invariant $T=T(b_6)$ having a single root. Then such a system is given in terms of a coframe $(\omega^1,\omega^2,\dots,\omega^8)$ as in (\ref{sysc3u})-(\ref{sysc3u1}) and its all local differential invariants are given in terms of a curvature of \emph{principal} connection $\gamma$ with \emph{torsion} on an 11-dimensional principal fiber bundle $H\to{\mathcal G}^{11}\to M$.

  The bundle $H\to{\mathcal G}^{11}\to M$ is locally a subbundle of the Cartan bundle $P\to{\mathcal G}^{21}\to M$ with an $\spa(3,\bbR)$ Cartan connection
  $$\omega=\bma[c|c||c|c]
  \begin{matrix}\Om_1&\Om_2\\\Om_3&\Om_4\end{matrix}&\begin{matrix}\Om_6\\\Om_7\end{matrix}&\begin{matrix}\Om_{11}&\Om_{12}\\\Om_{12}&\Om_{13}\end{matrix}&\begin{matrix}\Om_9\\\Om_{10}\end{matrix}\\
  \cmidrule(lr){1-4}
  \begin{matrix}\theta^7&\theta^6\end{matrix}&\Om_5&\begin{matrix}\Om_9&\Om_{10}\end{matrix}&\Om_8\\
  \cmidrule(lr){1-4}\morecmidrules\cmidrule(lr){1-4}
  \begin{matrix}\theta^2&\theta^1\\\theta^1&\theta^3\end{matrix}&\begin{matrix}\theta^5\\\theta^4\end{matrix}&\begin{matrix}-\Om_1&-\Om_3\\-\Om_2&-\Om_4\end{matrix}&\begin{matrix}-\theta^7\\-\theta^6\end{matrix}\\
  \cmidrule(lr){1-4}
  \begin{matrix}\theta^5&\theta^4\end{matrix}&\theta^8&\begin{matrix}-\Om_6&-\Om_7\end{matrix}&-\Om_5
      \ema,
      $$
      in which the forms $(\theta^1,\theta^2,\dots,\theta^8)$ are
      \be\theta^i=A^i{}_j\omega^j,\label{fo0}\ee with $(A^i{}_j)\in G_0$ as in Proposition \ref{pr36}. The subset ${\mathcal G}^{11}\subset{\mathcal G}^{21}$ is defined by the following condition on the curvature $$\kappa=\der\omega+\omega\dz\omega$$ of the Cartan connection:
      \be\kappa=\bma[l|c||c|c]
  \begin{matrix}0&\quad\quad\,\,\, 0\\0&\quad\quad\,\,\, 0\end{matrix}&\begin{matrix}0\\0\end{matrix}&\begin{matrix}0&0\\0&0\end{matrix}&\begin{matrix}0\\0\end{matrix}\\
  \cmidrule(lr){1-4}
  \begin{matrix}0&\quad\quad\,\,\, 0\end{matrix}&0&\begin{matrix}0&0\end{matrix}&0\\
  \cmidrule(lr){1-4}\morecmidrules\cmidrule(lr){1-4}
  \begin{matrix}0&0\\0&\theta^2\dz(\theta^5+\theta^7)\end{matrix}&\begin{matrix}0\\0\end{matrix}&\begin{matrix}0&0\\0&0\end{matrix}&\begin{matrix}0\\0\end{matrix}\\
  \cmidrule(lr){1-4}
  \begin{matrix}0&\quad\quad\,\,\, 0\end{matrix}&0&\begin{matrix}0&0\end{matrix}&0
      \ema.\label{kap}\ee
      The subgroup $H\subset G_0$ consists of real $8\times 8$ matrices 
 \be({\mathcal A}^i{}_j)=\bma
  a^5b&a^5bc&0&0&0&0&0&0\\
  0&a^4b&0&0&0&0&0&0\\
  2a^6bc&a^6bc^2&a^6b&0&0&0&0&0\\
  0&0&0&a^3b&a^3bc&0&0&0\\
  0&0&0&0&a^2b&0&0&0\\
  0&0&0&a^3(1-b)&a^3(1-b)c&a^3&a^3c&0\\
  0&0&0&0&a^2(1-b)&0&a^2&0\\
   0&0&0&0&0&0&0&b
   \ema,\quad ab\neq 0.\label{mah}\ee
\end{theorem}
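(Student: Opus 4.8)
The plan is to carry Cartan's reduction one stage further than the main-invariant analysis of the preceding section, starting from the partially normalized $G_0$-structure of Proposition \ref{pr36} and the \emph{closed} structure equations (\ref{sysc3u})--(\ref{sysc3u1}) that hold in the present case $\tilde u_{11}=u\neq 0$, all other $\tilde u_i=0$. Concretely, I would form the lifted coframe $\theta^i=A^i{}_j\om^j$ with $(A^i{}_j)\in G_0$, assemble it together with the connection $1$-forms $\Om_a$ into the $\spa(3,\bbR)$-valued matrix $\omega$ exactly as displayed in the statement, and compute the curvature $\kappa=\der\omega+\omega\dz\omega$. By (\ref{sysc3u}) the components of $\kappa$ are explicit functions on ${\mathcal G}^{21}$, polynomial in the fibre coordinates of $G_0$ and linear in the derived quantities $u,v_1,v_2,w_1,\dots,z_9$ governed by (\ref{sysc3u1}).

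The reduction rests entirely on the hypothesis $u\neq 0$. The point is that the lowest-weight torsion coefficient can be normalized to the fixed value $1$ precisely because $u$ is nowhere zero and supplies the nonvanishing scale needed to do so; this is what produces the single entry $\theta^2\dz(\theta^5+\theta^7)$ in the required normal form. I would impose this normalization and then absorb the remaining inessential torsion in order of weight, solving the resulting \emph{algebraic} equations for the fibre coordinates one block at a time. A direct check that the solution locus is cut out by the stated conditions shows that all but three of the $G_0$-parameters are fixed, and that the surviving freedom is exactly the three-parameter group $H$ of (\ref{mah}), with coordinates $(a,b,c)$ and open condition $ab\neq 0$. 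Reading off which entries of $(A^i{}_j)$ remain free gives the explicit matrix shape (\ref{mah}) and the dimension count $\dim{\mathcal G}^{11}=\dim M+\dim H=8+3=11$, exhibiting ${\mathcal G}^{11}\subset{\mathcal G}^{21}$ as the subset on which the normalization has been achieved.

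With the structure group cut down to $H$, I would recompute $\kappa=\der\omega+\omega\dz\omega$ on ${\mathcal G}^{11}$ and verify that every block vanishes except the lower-left entry, which collapses to $\theta^2\dz(\theta^5+\theta^7)$, yielding the normal form (\ref{kap}). The restriction $\omega|_{{\mathcal G}^{11}}$ then decomposes into the soldering coframe $(\theta^i)$ and a genuine principal $H$-connection $\gamma$, and the basic invariants $I_1,I_2,I_3,R$ of (\ref{in1}) reappear as the torsion of $\gamma$ through the identifications (\ref{in2}); their higher prolongations give all remaining local invariants, which is the sense in which the full invariant system is encoded in the curvature of $\gamma$. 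As an independent consistency check one can feed the coordinate formulas of Theorem \ref{exa} into (\ref{in1}) and confirm the same invariants.

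The main obstacle is this final verification: the curvature computation on a $21$-dimensional bundle is long, and the delicate part is confirming that, \emph{after} the normalizations, all the non-displayed components of $\kappa$ vanish identically rather than producing further invariants. This is not automatic --- it requires feeding the entire closed system (\ref{sysc3u1}) back into $\der\omega+\omega\dz\omega$ as a set of Bianchi-type identities, since any surviving component would force either an additional reduction of $H$ or an extra invariant. A secondary subtlety is checking that the normalization equations are mutually consistent and leave exactly the three parameters $(a,b,c)$ free --- no more and no fewer --- which is what pins the reduced structure group to the precise form (\ref{mah}).
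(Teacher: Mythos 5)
Your overall route is the same Cartan reduction the paper carries out: lift by $(A^i{}_j)\in G_0$, demand the curvature normal form (\ref{kap}), solve the resulting algebraic system for the fibre coordinates and the forms $\Om_a$, and read off the residual group. Your account of the role of $u\neq 0$ (it both forces $b_6=0$ and supplies the scale that fixes the coefficient of $\theta^2\dz(\theta^5+\theta^7)$) is also essentially right. The genuine gap is your central claim that the solution locus of (\ref{kap}) ``fixes all but three of the $G_0$-parameters''. It does not, and this is not a consistency check one can postpone to the end --- it is where your argument stops short. The unknowns in (\ref{kap}) are not only the $23$ fibre coordinates of $G_0$ but also the thirteen $1$-forms $\Om_a$, and solving for the latter repeatedly introduces \emph{new} scalar unknowns (the paper's $u_1,u_2,u_3,\lambda_1,\lambda_2,\dots$). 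When all the equations encoded in (\ref{kap}) are satisfied, one is left with \emph{six} free parameters --- $b_5,a_{26},a_{29},a_{38},a_{46}$ and the surviving unknown $u_3$ --- i.e.\ with a $14$-dimensional manifold $M\times G_1$, not an $11$-dimensional one. So the normal form (\ref{kap}) alone is strictly weaker than the definition of ${\mathcal G}^{11}$, and an argument using only it cannot produce the group (\ref{mah}).

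The missing idea is a second, different-in-kind normalization. On the $14$-dimensional locus the invariant coframe $(\theta^1,\dots,\theta^8,\Om_1,\Om_2,\Om_4,\Om_6,\Om_9,\Om_{11})$ fails to be (part of) a principal or Cartan connection, because
$$\der\Om_6=2\Om_1\dz\Om_6-2\Om_4\dz\Om_6+\Om_6\dz\theta^8-\Om_9\dz\theta^8-\Om_{11}\dz\theta^5+F_1\Om_6\dz\theta^2+F_2\Om_6\dz\theta^5+F_3\Om_6\dz\theta^7$$
with \emph{non-constant} functions $F_1,F_2,F_3$, whereas mixed vertical-horizontal terms in the curvature of any connection must carry constant coefficients. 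Imposing the additional normalizations $F_1=F_2=F_3=0$ of (\ref{ultn}) determines $u_3$, $a_{26}$ and $a_{46}$ via (\ref{no12}), and only then does one land on the $11$-dimensional bundle with residual parameters $b_5,a_{29},a_{38}$; the reparameterization $a_{29}=a^3b$, $a_{38}=a^2b$, $b_5=c$ then yields exactly the group $H$ of (\ref{mah}) and the dimension count $8+3=11$. Your proposal needs this step inserted between ``solve (\ref{kap})'' and ``read off $H$''; without it the three-parameter answer, and hence the statement of the theorem, is unobtainable.
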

Before the proof, for the completness of this result, we also give a corollary, which will be proven, after the proof of the theorem.
\begin{corollary}\label{coco}
  The normalization condition (\ref{kap}) implies that the basis forms $(\theta^1,\theta^2,\dots,\theta^8,\Om_1,\Om_2,\dots,\Om_{13})$ of the Cartan connection $\omega$, when pullbacked from ${\mathcal G}^{21}$ to ${\mathcal G}^{11}$, satisfy
\be\begin{aligned}
&\Om_3=\Om_7=\Om_{10}=\Om_{12}=\Om_{13}=0\\
    &\Om_5=3\Om_1-2\Om_4-\theta^8,\quad\quad\quad\quad\Om_8=6\Om_1-4\Om_4-\theta^8\\
&\Om_6=-t^4_{~12}\theta^2-t^4_{~15}\theta^5-t^4_{~17}\theta^7\\
&\Om_9=-t^4_{~12}\theta^2+(2t^4_{~17}-3t^4_{~15})\theta^5+(t^4_{~17}-2t^4_{~15})\theta^7\\
&\Om_{11}=r_{212}\theta^2-\tfrac43t^4_{~12}\theta^5-\tfrac43t^4_{~12}\theta^7\\
\end{aligned}\label{coco1}
\ee
  on ${\mathcal G}^{11}$. In terms of the pullbacks the principal connection $\gamma$ on ${\mathcal G}^{11}$ is given by
{\tiny\be
(\gamma^i{}_j)=\bma
-\Om_1-\Om_4&-\Om_2&0&0&0&0&0&0\\0&-2\Om_1&0&0&0&0&0&0\\-2\Om_2&0&-2\Om_4&0&0&0&0&0\\0&0&0&-3\Om_1+\Om_4&-\Om_2&0&0&0\\0&0&0&0&-4\Om_1+2\Om_4&0&0&0\\0&0&0&6\Om_1-4\Om_4&0&3\Om_1-3\Om_4&-\Om_2&0\\0&0&0&0&6\Om_1-4\Om_4&0&2\Om_1-2\Om_4&0\\
  0&0&0&0&0&0&0&-6\Om_1+4\Om_4
  \ema.\label{coco2}\ee}
\end{corollary}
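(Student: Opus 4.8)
The plan is to prove the corollary by a direct computation of the curvature $\kappa=\der\omega+\omega\dz\omega$ of the explicit $\spa(3,\bbR)$-valued connection $\omega$ displayed in Theorem \ref{ft}, and then to impose the normalization (\ref{kap}) entry by entry. Since $\omega$ is written in the $6\times6$ block form adapted to the Monge gradation, I would organize the computation by graded components: each block of $\kappa$ is a matrix of $2$-forms assembled from $\der\Om_A$, $\der\theta^i$ and the quadratic terms of $\omega\dz\omega$. On ${\mathcal G}^{11}$ the coframe $(\theta^i)$ is the lift of the coframe satisfying (\ref{sysc3u})--(\ref{sysc3u1}) via $\theta^i=A^i{}_j\omega^j$ with $A\in G_0$, so the $\der\theta^i$ are already known modulo the connection forms. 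Substituting these into the blocks turns the single matrix identity $\kappa=(\ref{kap})$ into a system of $2$-form equations that is \emph{linear} in the connection forms $\Om_A$.

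The key step is to exploit that $(\theta^1,\dots,\theta^8)$ is a coframe on the lift of $M$, so that every $2$-form decomposes uniquely into its $\theta^j\dz\theta^k$ components. Reading off these coefficients block by block, the vanishing of the $\mathfrak{g}_0,\mathfrak{g}_1,\mathfrak{g}_2,\mathfrak{g}_3$ components of $\kappa$ yields a linear system whose solution expresses ten of the connection forms in terms of the remaining ones: $\Om_3,\Om_7,\Om_{10},\Om_{12},\Om_{13}$ are forced to vanish, $\Om_5$ and $\Om_8$ become combinations of $\Om_1,\Om_4,\theta^8$, and $\Om_6,\Om_9,\Om_{11}$ become semibasic combinations of $\theta^2,\theta^5,\theta^7$ with the coefficients $t^4_{~12},t^4_{~15},t^4_{~17},r_{212}$ fixed by (\ref{in2}) in terms of the basic invariants (\ref{in1}). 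This is precisely (\ref{coco1}), and it leaves $\Om_1,\Om_2,\Om_4$ together with $\theta^1,\dots,\theta^8$ as the $11$ independent forms on ${\mathcal G}^{11}$. The unique surviving nonzero entry of $\kappa$ must then reproduce $\theta^2\dz(\theta^5+\theta^7)$, which is the consistency check that the prescribed normalization is attainable rather than overdetermined.

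With (\ref{coco1}) in hand, the remaining step is to extract the principal connection $\gamma$. Substituting the solved expressions for the $\Om_A$ back into the structure equations rewrites each $\der\theta^i$ in the form $\der\theta^i=-\gamma^i{}_j\dz\theta^j+T^i$, where $\gamma$ takes values in $\mathfrak{h}=\operatorname{Lie}(H)$ obtained by differentiating the matrices (\ref{mah}) at the identity, and $T^i$ is a purely horizontal (semibasic) torsion. Because the only free connection forms left are $\Om_1,\Om_2,\Om_4$, matching $\dim\mathfrak{h}=3$, collecting the terms linear in these forms identifies the matrix entries $\gamma^i{}_j$ and produces exactly (\ref{coco2}); the leftover $\theta\dz\theta$ terms constitute the torsion of $\gamma$.

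The main obstacle I expect is bookkeeping rather than conceptual: $\kappa$ is a $6\times6$ array of $2$-forms depending on all $21$ connection forms, and one must carefully distribute the quadratic contributions of $\omega\dz\omega$ across the graded blocks while respecting the symmetry of the $\aB$- and $\bf a$-type blocks. The one genuinely delicate point is compatibility of the normalization (\ref{kap}) with the Bianchi identity $\der\kappa=\kappa\dz\omega-\omega\dz\kappa$, which guarantees that prescribing all but one curvature component to vanish is consistent and that the surviving component is forced to be precisely $\theta^2\dz(\theta^5+\theta^7)$. This is where the hypothesis $u\neq0$ (the fivefold root of $T$) enters, since it is exactly the nonvanishing curvature that makes the normalizations in (\ref{in2}) and (\ref{coco1}) well defined.
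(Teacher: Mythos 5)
Your global strategy (impose (\ref{kap}) componentwise and solve) is indeed the paper's strategy, but two structural assumptions in your plan are wrong, and the first one is fatal as stated. You treat (\ref{kap}) as a system that is \emph{linear in the $\Om_A$ alone}, to be solved over a fixed horizontal coframe, asserting that ``every $2$-form decomposes uniquely into its $\theta^j\dz\theta^k$ components.'' On ${\mathcal G}^{21}$ (and on ${\mathcal G}^{11}$) the $\theta^i$ are only $8$ of the $21$ (resp.\ $11$) coframe forms, so this decomposition claim is false; and, much more importantly, the system (\ref{kap}) is \emph{not solvable} for the $\Om_A$ if the $G_0$-parameters in $\theta^i=A^i{}_j\om^j$ are left arbitrary. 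Concretely, the equation $E_2=0$ of (\ref{e2}) carries the obstruction term $\tfrac{a_{38}^2T}{a_{29}^3b_7(b_5b_6-1)^3}\,\theta^3\dz\theta^6$ with $T=ub_6^5$, which no choice of the $\Om_A$ can absorb; since $u\neq 0$, solvability forces the \emph{fiber} condition $b_6=0$ of (\ref{no1}). This is the first of the chain of normalizations (\ref{no1})--(\ref{no12}) — including the last three, $F_1=F_2=F_3=0$ in (\ref{ultn}), needed so that the mixed vertical--horizontal terms in $\der\Om_6$ have constant coefficients — which cut the $23$-parameter group $G_0$ down to the $3$-parameter group $H$ of (\ref{mah}). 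That reduction \emph{is} the definition of ${\mathcal G}^{11}$ and is exactly where $u\neq 0$ enters (note the divisions by $u$ in (\ref{no2}) and (\ref{no12})); your plan contains no mechanism for producing these fiber conditions, and without them the ``linear system'' you propose to solve is inconsistent, not merely tedious.

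Secondly, your attribution of (\ref{coco1}) to ``the vanishing of the $\mathfrak{g}_0,\mathfrak{g}_1,\mathfrak{g}_2,\mathfrak{g}_3$ components of $\kappa$'' misdescribes where the relations come from. In the actual solution, the semibasic expressions for $\Om_5,\Om_6,\Om_8,\Om_9$ and the (eventual) vanishing of $\Om_3,\Om_7,\Om_{10}$ are extracted from the \emph{negative-grade} equations (those for $\der\theta^1,\dots,\der\theta^8$), and these determine the $\Om$'s only \emph{up to residual free $1$-form coefficients} (the paper's $u_1,u_2,u_3,u_4,u_5,u_6,\lambda_1,\lambda_2$, cf.\ (\ref{fo1})); the nonnegative-grade equations for $\der\Om_a$ are then what kill those residuals and produce $\Om_{11}$ and $\Om_{12}=\Om_{13}=0$. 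So neither block of equations suffices by itself, and ``uniquely solving a linear system'' is not an accurate description of the logic: existence and uniqueness only emerge at the end, when the explicit solution (\ref{fifo}) is in hand, from which (\ref{coco1}) and then (\ref{coco2}) (via (\ref{conh}) and the structure equations (\ref{syuu})) are read off. Relatedly, your appeal to the Bianchi identity to ``guarantee'' that prescribing all but one curvature entry is consistent is not a proof — Bianchi gives necessary conditions, not sufficiency — and the paper never argues this way; consistency is established constructively by exhibiting the solution.
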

\begin{proof}(of the Theorem and the Corollary)
  We take the most general coframe $(\theta^i)=(A^i{}_j\omega^j)$, with $(A^i{}_j)\in G_0$, corresponding to the coframe $(\omega^j)$ satisfying (\ref{sysc3u})-(\ref{sysc3u1}), and try to solve the normalization conditions (\ref{kap}) for $A^i{}_j$ and $(\Omega_1,\Omega_2,\dots,\Om_{13})$. Explicitly, we are looking for  $A^i{}_j$ and $(\Omega_1,\Omega_2,\dots,\Om_{13})$ such that:
  {\tiny $$\begin{aligned}
       0=&\bma[l|c||c|c]
  \begin{matrix}0&\quad\quad\,\,\, 0\\0&\quad\quad\,\,\, 0\end{matrix}&\begin{matrix}0\\0\end{matrix}&\begin{matrix}0&0\\0&0\end{matrix}&\begin{matrix}0\\0\end{matrix}\\
  \cmidrule(lr){1-4}
  \begin{matrix}0&\quad\quad\,\,\, 0\end{matrix}&0&\begin{matrix}0&0\end{matrix}&0\\
  \cmidrule(lr){1-4}\morecmidrules\cmidrule(lr){1-4}
  \begin{matrix}{\color{blue}0}&0\\0&{\color{red} \theta^2\dz(\theta^5+\theta^7)}\end{matrix}&\begin{matrix}0\\0\end{matrix}&\begin{matrix}0&0\\0&0\end{matrix}&\begin{matrix}0\\0\end{matrix}\\
  \cmidrule(lr){1-4}
  \begin{matrix}0&\quad\quad\,\,\, 0\end{matrix}&0&\begin{matrix}0&0\end{matrix}&0
      \ema-\der\bma[c|c||c|c]
  \begin{matrix}\Om_1&\Om_2\\\Om_3&\Om_4\end{matrix}&\begin{matrix}\Om_6\\\Om_7\end{matrix}&\begin{matrix}\Om_{11}&\Om_{12}\\\Om_{12}&\Om_{13}\end{matrix}&\begin{matrix}\Om_9\\\Om_{10}\end{matrix}\\
  \cmidrule(lr){1-4}
  \begin{matrix}\theta^7&\theta^6\end{matrix}&\Om_5&\begin{matrix}\Om_9&\Om_{10}\end{matrix}&\Om_8\\
  \cmidrule(lr){1-4}\morecmidrules\cmidrule(lr){1-4}
  \begin{matrix}{\color{blue}\theta^2}&\theta^1\\\theta^1&{\color{red} \theta^3}\end{matrix}&\begin{matrix}\theta^5\\\theta^4\end{matrix}&\begin{matrix}-\Om_1&-\Om_3\\-\Om_2&-\Om_4\end{matrix}&\begin{matrix}-\theta^7\\-\theta^6\end{matrix}\\
  \cmidrule(lr){1-4}
  \begin{matrix}\theta^5&\theta^4\end{matrix}&\theta^8&\begin{matrix}-\Om_6&-\Om_7\end{matrix}&-\Om_5
      \ema-\\&\bma[c|c||c|c]
  \begin{matrix}\Om_1&\Om_2\\\Om_3&\Om_4\end{matrix}&\begin{matrix}\Om_6\\\Om_7\end{matrix}&\begin{matrix}\Om_{11}&\Om_{12}\\\Om_{12}&\Om_{13}\end{matrix}&\begin{matrix}\Om_9\\\Om_{10}\end{matrix}\\
  \cmidrule(lr){1-4}
  \begin{matrix}\theta^7&\theta^6\end{matrix}&\Om_5&\begin{matrix}\Om_9&\Om_{10}\end{matrix}&\Om_8\\
  \cmidrule(lr){1-4}\morecmidrules\cmidrule(lr){1-4}
  \begin{matrix}{\color{blue}\theta^2}&{\color{blue}\theta^1}\\{\color{red}\theta^1}&{\color{red}\theta^3}\end{matrix}&\begin{matrix}{\color{blue}\theta^5}\\{\color{red}\theta^4}\end{matrix}&\begin{matrix}{\color{blue}-\Om_1}&{\color{blue}-\Om_3}\\{\color{red}-\Om_2}&{\color{red}-\Om_4}\end{matrix}&\begin{matrix}{\color{blue}-\theta^7}\\{\color{red}-\theta^6}\end{matrix}\\
  \cmidrule(lr){1-4}
  \begin{matrix}\theta^5&\theta^4\end{matrix}&\theta^8&\begin{matrix}-\Om_6&-\Om_7\end{matrix}&-\Om_5
      \ema\dz\bma[c|c||c|c]
  \begin{matrix}{\color{blue}\Om_1}&{\color{red}\Om_2}\\{\color{blue}\Om_3}&{\color{red}\Om_4}\end{matrix}&\begin{matrix}\Om_6\\\Om_7\end{matrix}&\begin{matrix}\Om_{11}&\Om_{12}\\\Om_{12}&\Om_{13}\end{matrix}&\begin{matrix}\Om_9\\\Om_{10}\end{matrix}\\
  \cmidrule(lr){1-4}
  \begin{matrix}{\color{blue}\theta^7}&{\color{red}\theta^6}\end{matrix}&\Om_5&\begin{matrix}\Om_9&\Om_{10}\end{matrix}&\Om_8\\
  \cmidrule(lr){1-4}\morecmidrules\cmidrule(lr){1-4}
  \begin{matrix}{\color{blue}\theta^2}&{\color{red}\theta^1}\\{\color{blue}\theta^1}&{\color{red}\theta^3}\end{matrix}&\begin{matrix}\theta^5\\\theta^4\end{matrix}&\begin{matrix}-\Om_1&-\Om_3\\-\Om_2&-\Om_4\end{matrix}&\begin{matrix}-\theta^7\\-\theta^6\end{matrix}\\
  \cmidrule(lr){1-4}
  \begin{matrix}{\color{blue}\theta^5}&{\color{red}\theta^4}\end{matrix}&\theta^8&\begin{matrix}-\Om_6&-\Om_7\end{matrix}&-\Om_5
      \ema
     \end{aligned}$$}
for $A^i{}_j$ and $(\Omega_1,\Omega_2,\dots,\Om_{13})$. 
The first thing we note is that these equations differ from the flat Maurer-Cartan equations only in the equation for $\der\theta^3$ (distinguished in red color above). And the difference is very tiny: even this equation differes from the equation for $\der\theta^3$ in the flat case by one term only, the term equal to ${\color{red}\theta^2\dz(\theta^5+\theta^7)}$. Writing this (red colored) equation explicitly we have ${\color{red}E_3=0}$ with  
  \be{\color{red}E_3=\der\theta^3-\Big(2\Om_2\dz\theta^1+2\Om_4\dz\theta^3-2\theta^4\dz\theta^6+\theta^2\dz(\theta^5+\theta^7)\Big)}\label{e3}\ee
as one of the equations to solve. On the other hand, from Proposition \ref{proe} we know that in the case of \emph{any} Monge system (\ref{sysc3}) the equation for $\der\theta^2$, which now reads ${\color{blue}E_2=0}$ with
\be {\color{blue}E_2= \der\theta^2-\Big(2\Om_1\dz\theta^2+2\Om_3\dz\theta^1-2\theta^5\dz\theta^7\Big)},\label{e2}\ee
satisfies
$$\Big({\color{blue}\der\theta^2}-\frac{a_{38}^2T}{a_{29}^3b_7(b_5b_6-1)^3}\theta^3\dz\theta^6\Big)\dz\theta^1\dz\theta^2\dz\theta^4\dz\theta^5\dz\theta^7\dz\theta^8=0.$$
This last equation is compatible with the equation (\ref{e2}) which we want to solve, if and only if
$$\frac{a_{38}^2T}{a_{29}^3b_7(b_5b_6-1)^3}=0.$$
In the flat case, when $T= 0$, this is satisfied automatically, but in our case when $T=ub_6^5$ we are forced to have
$$\frac{a_{38}^2ub_6^5}{a_{29}^3b_7(b_5b_6-1)^3}=0.$$
Since $u\neq 0$ this requires a \emph{normalization of one of the parameters} of the group $G_0$. Moreover, since the determinant of the most general element $(A^i{}_j)$ of the group $G_0$ is
\be\det(A^i{}_j)=-a_{29}^5a_{38}^5a_{49}^6b_7^{10}(1-b_5b_6)^^5\neq 0,\label{det}\ee
we can \emph{not} choose $a_{38}=0$. Thus, to satisfy equation (\ref{e2}), we must restrict the set of group parameters to a subset in which
\be
b_6=0.\label{no1}\ee
Note that such a normalization would not be justified in the flat case!

Now we accept the normalization, $b_6=0$, assuming it from now on,
and try to solve the rest of the equations (\ref{kap}). In particular we look at the equation $${\color{red}E_3}\dz\theta^1\dz\theta^3\dz\theta^4{\color{red}=0}.$$
A short calculation shows that this is equivalent to:
$$\begin{aligned}&{\color{red} 0=}-a_{38}^3a_{49}b_7^5{\color{red}E_3}\dz\theta^1\dz\theta^3\dz\theta^4=\\
&\Big((ua_{29}^2+a_{38}^3a_{49}b_7^2)\theta^5-2a_{38}(a_{17}-a_{16}b_5+a_{18}b_5^2)\theta^6-a_{49}b_7(ua_{29}^2-a_{38}^3b_7)\theta^7\Big)\dz\theta^{1234},\end{aligned}$$
where we have introduced $\theta^{1234}:=\theta^1\dz\theta^2\dz\theta^3\dz\theta^4$. Since the indices in the basis forms $\theta^i$ run from 1 to 8, i.e. they are one-digit, we will use similar abreviations in the following: for example $\theta^{ij}$ and $\theta^{ijk}$ will respectively mean: $\theta^{ij}=\theta^i\dz\theta^j$ and $\theta^{ijk}=\theta^i\dz\theta^j\dz\theta^k$. 

Because of $a_{38}a_{49}b_7\neq 0$, due to the determinant condition (\ref{det}), and the linear independence of $\theta^5$, $\theta^6$ and $\theta^7$, we have to make another restriction on the group $G_0$ parameters to satisfy this equation. Solving (\ref{no2}) with respect to $a_{49}$, $b_7$ and $a_{17}$ we get the following further restrictions:
\be
a_{49}=-\frac{a_{38}^3}{ua_{29}^2},\quad b_7=\frac{ua_{29}^2}{a_{38}^3},\quad a_{17}=b_5(a_{16}-a_{18}b_5).\label{no2}\ee
Having imposed restrictions (\ref{no1}), (\ref{no2}) we now look at
$${\color{red} 0=E_3}\dz\theta^1\dz\theta^3,\quad \mathrm{and}\quad {\color{blue} 0=E_2}\dz\theta^1\dz\theta^2.$$
These, respectively, are:
$$\begin{aligned}
  {\color{red} 0=u a_{29}^2E_3}\dz\theta^1\dz\theta^3=-2a_{38}(a_{27}-a_{26}b_5+a_{28}b_5^2)\theta^{1234}\\
{\color{blue} 0=ua_{29}^4E_2}\dz\theta^1\dz\theta^2=-2a_{38}^3(a_{36}\theta^5-a_{23}\theta^7)\dz\theta^{123}.\end{aligned}$$ 
This brings next \emph{three} new normalizations:
\be a_{23}=a_{36}=0,\quad a_{27}=b_5(a_{26}-a_{28}b_5).\label{no3}\ee
After imposing these normalizations we now look at ${\color{blue}0=E_2}\dz\theta^2$. This reads:
$${\color{blue}0=E_2}\dz\theta^2=-2(\Om_3+\frac{a_{38}^2a_{34}}{ua_{29}^3}\theta^5-\frac{a_{21}a_{38}^2}{ua_{29}^3}\theta^7)\dz\theta^{12}.$$
  This, in particular shows that the 1-form $\Om_3$ must be a `lift' of a 1-form from $M$, as to satisfy this equation we have to have
  \be\Om_3=-\frac{a_{38}^2a_{34}}{ua_{29}^3}\theta^5+\frac{a_{38}^2a_{21}}{ua_{29}^3}\theta^7+u_1 \theta^1+u_2\theta^2,\label{fo1}\ee
  with some \emph{new} parameters $$u_1\quad\&\quad u_2.$$
  It is now time to analyze consequences of equation $E_1$ for the form $\der\theta^1$. This reads:
  $$E_1=\der\theta^1-(\Om_1\dz\theta^1+\Om_2\dz\theta^2+\Om_3\dz\theta^3+\Om_4\dz\theta^1-\theta^4\dz\theta^7-\theta^5\dz\theta^6).$$
  Using the normalizations (\ref{no1}), (\ref{no2}), (\ref{no3}), and the definition (\ref{fo1}) we easilly find that:
  $$0=u a_{29}^4E_1\dz\theta^1\dz\theta^2=a_{38}^2\Big((a_{29}a_{34}-a_{28}a_{38})\theta^5+(a_{18}a_{38}-a_{21}a_{29})\theta^7\Big)\dz\theta^{123}.$$
  This gives next normalizations, which require tu put:
  \be a_{18}=\lambda_2 a_{29},\quad a_{21}=\lambda_2a_{38},\quad a_{28}=\lambda_1 a_{29},\quad a_{34}=\lambda_1 a_{38},\label{no4}\ee
  with new unknowns
  $$\lambda_1\quad\&\quad \lambda_2.$$
  Now the linear algebraic equations $E_1={\color{blue} E_2}={\color{red} E_3}=0$ for the forms $\Om_1$, $\Om_2$ $\Om_3$ can be totally solved. The result is:
  \be\begin{aligned}
  \Om_1=&\frac{\der a_{29}}{a_{29}} - \frac{\der a_{38}}{2 a_{38}} +\\& \frac{2 u^3 u_2 a_{29}^6 + 2 \lambda_1 u a_{16} a_{38}^5 - 2 \lambda_2 u a_{26} a_{38}^5 + \lambda_1 v_2 a_{29} a_{38}^5 + \lambda_2 v_2 a_{29} a_{38}^5}{2 u^3 a_{29}^6} \theta^1+ u_3 \theta^2 - \\&\frac{a_{38}^2 (2 u a_{26} - v_2 a_{29} - 4 \lambda_1 u a_{29} b_{5})}{2 u^2 a_{29}^3} \theta^5 + \frac{a_{38}^2 (2 u a_{16} + v_2 a_{29} - 4 \lambda_2 u a_{29} b_{5})}{2 u^2 a_{29}^3} \theta^7\\
  \Om_2=&\frac{a_{29} \der b_{5}}{a_{38}} +\\& \frac{2 u^3 u_3 a_{29}^5 + u v_1 a_{29}^3 a_{38} - v_2 a_{16} a_{38}^4 - v_2 a_{26} a_{38}^4 + 2 \lambda_1 v_2 a_{29} a_{38}^4 b_{5} + 2 \lambda_2 v_2 a_{29} a_{38}^4 b_{5}}{2 u^3 a_{29}^5} \theta^1 +\\& \frac{(\lambda_1 + \lambda_2) a_{38}^3}{2 u a_{29}^3} \theta^2 + \frac{u^2 u_2 a_{29}^6 + \lambda_1 a_{16} a_{38}^5 - \lambda_2 a_{26} a_{38}^5}{u^2 a_{29}^6} \theta^3 - \frac{a_{38}^2 (a_{26} - 2 \lambda_1 a_{29} b_{5})}{u a_{29}^3}\theta^4 + \\&\frac{a_{38}^2 (a_{16} - 2 \lambda_2 a_{29} b_{5})}{u a_{29}^3} \theta^6\\
  \Om_4=&\frac{2 \der a_{29}}{a_{29}} - \frac{3 \der a_{38}}{2 a_{38}} + \frac{2 u^3 u_2 a_{29}^5 + \lambda_1 v_2 a_{38}^5 + \lambda_2 v_2 a_{38}^5}{2 u^3 a_{29}^5} \theta^1 - \\&\frac{a_{38} (u v_1 a_{29}^3 - v_2 a_{16} a_{38}^3 - v_2 a_{26} a_{38}^3 + 
    2 \lambda_1 v_2 a_{29} a_{38}^3 b_{5} + 2 \lambda_2 v_2 a_{29} a_{38}^3 b_{5})}{2 u^3 a_{29}^5} \theta^2 + u_1 \theta^3 -\\& \frac{\lambda_1 a_{38}^3}{u a_{29}^3} \theta^4 + \frac{v_2 a_{38}^2}{2 u^2 a_{29}^2} \theta^5 + \frac{\lambda_2 a_{38}^3}{u a_{29}^3} \theta^6 + \frac{v_2 a_{38}^2}{2 u^2 a_{29}^2} \theta^7,
  \end{aligned}\label{fo2}\ee
  with the following new normalizations:
  \be a_{22}=\frac{a_{38}(a_{16}-\lambda_2a_{29}b_5)}{a_{29}},\quad a_{35}=\frac{a_{38}(a_{26}-\lambda_1a_{29}b_5)}{a_{29}}.\label{no5}\ee
  Note that in the above formulas for $\Omega_1,\Omega_2,\Omega_4$ a new unknown $$u_3$$
 was introduced. Note also the appearence of the derivatives $v_1$ and $v_2$ of the function $u$ in these expressions.

 Let us summarize, this what we have achieved so far:

 We solved the equations $E_1=E_2=E_3=0$, which forced us to normalize \emph{thirteen} group $G_0$ parameters, namely $b_6, b_7, a_{17},a_{18}, a_{21}, a_{22}, a_{23}, a_{27},  a_{28}, a_{34}, a_{35}, a_{36}, a_{49}$. We also determined \emph{four} 1-forms $\Om_1,\Om_2,\Om_3,\Om_4$, which were part of our unknowns. The price paid for this was the introduction of \emph{five new} unknowns, namely the variables $u_1, u_2,u_3,\lambda_1,\lambda_2$. Recalling that $G_0$ has 23 arameters, we are now rested with $23-13+5=15$ unknown scalar variables, and still undetermined 1-forms $\Om_5,\Om_6,\dots, \Om_{13}$.

 Now in the same way we solve the normalization equations (\ref{kap}) with $\der\theta^4$, $\der\theta^5$, $\der\theta^6$, $\der\theta^7$, $\der\theta^8$. These are:
 $$\begin{aligned}
 E_4=&\der\theta^4-(\Om_2\dz\theta^5+\Om_4\dz\theta^4+\Om_5\dz\theta^4+\Om_6\dz\theta^1+\Om_7\dz\theta^3+\theta^6\dz\theta^8)=0\\
E_5=&\der\theta^5-(\Om_1\dz\theta^5+\Om_3\dz\theta^4+\Om_5\dz\theta^5+\Om_6\dz\theta^2+\Om_7\dz\theta^1+\theta^7\dz\theta^8)=0\\
E_6=&\der\theta^6-(\Om_2\dz\theta^7+\Om_4\dz\theta^6-\Om_5\dz\theta^6-\Om_8\dz\theta^4-\Om_9\dz\theta^1-\Om_{10}\dz\theta^3)=0\\
E_7=&\der\theta^7-(\Om_1\dz\theta^7+\Om_3\dz\theta^6-\Om_5\dz\theta^7-\Om_8\dz\theta^5-\Om_9\dz\theta^2-\Om_{10}\dz\theta^1)=0\\
E_8=&\der\theta^8-(2\Om_5\dz\theta^8+2\Om_6\dz\theta^5+2\Om_7\dz\theta^4)=0.
 \end{aligned}$$ 
 Here we only indicate in which order one should do it, and what is the result at each step:
 \begin{itemize}
 \item Equations $E_4\dz\theta^{134}=0$ and $E_5\dz\theta^{125}=0$ are equivalent to the following normalizations
   \be\lambda_1=\lambda_2=a_{44}=a_{45}=0, \quad a_{16}=\frac{ua_{29}^3a_{46}}{2a_{38}^3},\quad a_{43}=\frac{4a_{38}^3a_{42}b_5-ua_{29}^2a_{46}^2}{4a_{38}^3}\label{no6}\ee
 \item Equations $E_4\dz\theta^{14}=0$, $E_5\dz\theta^{25}=0$ and $E_8\dz\theta^{58}=0$ are equivalent to
   \be \Om_7=u_1\theta^4+u_2\theta^5,\label{fo3}\ee
   and a normalization
   \be a_{42}=0.\label{no7}\ee
 \item Equations $E_6\dz\theta^{146}=0$ and $E_7\dz\theta^{257}=0$ are equivalent to
   \be\Om_{10}=u_4\theta^1-u_1\theta^6-u_2\theta^7.\label{fo4}\ee
   Here we had to introduce a \emph{new unknown} $u_4$.
   \item Now solving $E_4\dz\theta^4=0$, $E_4=0$, $E_5=0$, $E_7\dz\theta^5=0$, $E_7=0$, $E_6=0$ for $\Om_6$, $\Om_5$, $\Om_9$ $\Om_8$, respectively, and using all the normalizations made so far, we get the ten 1-forms $\Om_1,\Om_2,\dots,\Om_{10}$ as below:
 \end{itemize}
 \be\begin{aligned}
 \Om_1=&\frac{\der a_{29}}{a_{29}} - \frac{\der a_{38}}{2 a_{38}} +u_2 \theta^1+ u_3 \theta^2 -\frac{a_{38}^2 (2 u a_{26} - v_2 a_{29})}{2 u^2 a_{29}^3} \theta^5 + \frac{u^2a_{29}^2a_{46} + v_2 a_{38}^3}{2 u^2 a_{29}^2a_{38}} \theta^7\\
 \Om_2=&\frac{a_{29} \der b_{5}}{a_{38}} +\frac{4 u^3 u_3 a_{29}^5 +2 u v_1 a_{29}^3 a_{38} - 2v_2 a_{26} a_{38}^4 - u v_2 a_{29}^3 a_{38}a_{46}}{4 u^3 a_{29}^5} \theta^1 +\\& + u_2\theta^3 - \frac{a_{38}^2a_{26}}{u a_{29}^3}\theta^4 + \frac{a_{46}}{2 a_{38}} \theta^6\\
 \Om_3=&u_1\theta^1+u_2\theta^2\\
 \Om_4=&\frac{2 \der a_{29}}{a_{29}} - \frac{3 \der a_{38}}{2 a_{38}} + u_2\theta^1 -\frac{a_{38} (2u v_1 a_{29}^3 - 2v_2 a_{26} a_{38}^3 - uv_2 a_{29}^3 a_{46})}{4 u^3 a_{29}^5} \theta^2 + u_1 \theta^3 +\\ &\frac{v_2 a_{38}^2}{2 u^2 a_{29}^2} \theta^5  + \frac{v_2 a_{38}^2}{2 u^2 a_{29}^2} \theta^7\\
 \Om_5=&-\frac{\der a_{29}}{a_{29}} + \frac{3\der a_{38}}{2 a_{38}} -\\& \frac{-2 u v_1 a_{29}^3 a_{38}^3 + 2 v_2 a_{26} a_{38}^6 + 2 u^2 a_{26} a_{29}^2 a_{38}^3 a_{46} + u v_2 a_{29}^3 a_{38}^3 a_{46} + u^3 a_{29}^5 a_{46}^2}{4 u^3 a_{29}^5 a_{38}^2} \theta^2 -\\& \frac{2 u a_{26} a_{38}^3 + v_2 a_{29} a_{38}^3 + 2 u^2 a_{29}^3 a_{46}}{2 u^2 a_{29}^3 a_{38}} \theta^5 - \frac{v_2 a_{38}^3 + u^2 a_{29}^2 a_{46}}{2 u^2 a_{29}^2 a_{38}} \theta^7 - \theta^8\\
 \Om_6=&-\frac{a_{46} \der a_{29}}{a_{29} a_{38}} + \frac{3 a_{46} \der a_{38}}{2 a_{38}^2} - \frac{\der a_{46}}{2 a_{38}} - \\&\frac{a_{46} (-4 u v_1 a_{29}^3 a_{38}^3 + 4 v_2 a_{26} a_{38}^6 + 2 u^2 a_{26} a_{29}^2 a_{38}^3 a_{46} + 2 u v_2 a_{29}^3 a_{38}^3 a_{46} +u^3 a_{29}^5 a_{46}^2}{8 u^3 a_{29}^5 a_{38}^3} \theta^2+ u_2 \theta^4 + \\&\frac{4 u^3 u_3 a_{29}^5 a_{38}^2 + 2 u v_1 a_{29}^3 a_{38}^3 - 2 v_2 a_{26} a_{38}^6 - 2 u^2 a_{26} a_{29}^2 a_{38}^3 a_{46} - 3 u v_2 a_{29}^3 a_{38}^3 a_{46} - 2 u^3 a_{29}^5 a_{46}^2}{4 u^3 a_{29}^5 a_{38}^2} \theta^5 -\\& \frac{a_{46} (2 v_2 a_{38}^3 + u^2 a_{29}^2 a_{46})}{4 u^2 a_{29}^2 a_{38}^2} \theta^7 - \frac{2 a_{26} a_{38}^3 + u a_{29}^3 a_{46}}{2 u a_{29}^3 a_{38}} \theta^8\\
 \Om_7=&u_1\theta^4+u_2\theta^5\\
 \Om_8=&-\frac{2 \der a_{29}}{a_{29}} + \frac{3 \der a_{38}}{a_{38}} - \\&\frac{-4 u v_1 a_{29}^4 a_{38}^3 - 4 u a_{26}^2 a_{38}^6 + 4 v_2 a_{26} a_{29} a_{38}^6 + 2 u v_2 a_{29}^4 a_{38}^3 a_{46} + u^3 a_{29}^6 a_{46}^2}{4 u^3 a_{29}^6 a_{38}^2} \theta^2 -\\&
 \frac{v_2 a_{38}^3 + u^2 a_{29}^2 a_{46}}{u^2 a_{29}^2 a_{38}} \theta^5 + \frac{(2 u a_{26} - v_2 a_{29}) a_{38}^2}{u^2 a_{29}^3} \theta^7 - \theta^8\\
 \Om_9=&\frac{a_{38}^2 \der a_{26}}{u a_{29}^3} - \frac{(3 a_{26} a_{38}^3 + u a_{29}^3 a_{46}) \der a_{29}}{u a_{29}^4 a_{38}} + \frac{3 (2 a_{26} a_{38}^3 + u a_{29}^3 a_{46}) \der a_{38}}{2 u a_{29}^3 a_{38}^2} - \\&\frac{(2 a_{26} a_{38}^3 + u a_{29}^3 a_{46})(-4 u v_1 a_{29}^3 a_{38}^3 + 4 v_2 a_{26} a_{38}^6 + 2 u v_2 a_{29}^3 a_{38}^3 a_{46} + u^3 a_{29}^5 a_{46}^2)}{8 u^4 a_{29}^8 a_{38}^3} \theta^2- \\&\frac{2 u a_{26}^2 a_{38}^6 + 2 v_2 a_{26} a_{29} a_{38}^6 + 2 u^2 a_{26} a_{29}^3 a_{38}^3 a_{46} + u v_2 a_{29}^4 a_{38}^3 a_{46} + u^3 a_{29}^6 a_{46}^2}{2 u^3 a_{29}^6 a_{38}^2} \theta^5 - u_2 \theta^6 - \\&\frac{4 u^3 u_3 a_{29}^5 + 2 u v_1 a_{29}^3 a_{38} + 2 v_2 a_{26} a_{38}^4 - 2 u^2 a_{26} a_{29}^2 a_{38} a_{46} + u v_2 a_{29}^3 a_{38} a_{46}}{ 4 u^3 a_{29}^5} \theta^7 - \\&\frac{2 a_{26} a_{38}^3 + u a_{29}^3 a_{46}}{2 u a_{29}^3 a_{38}} \theta^8\\
 \Om_{10}=&-u_1\theta^6-u_2\theta^7.
 \end{aligned}\label{fo5}\ee
 We have to mention that to satisfy equations $E_4\dz\theta^4=0$, $E_4=0$, $E_5=0$, $E_7\dz\theta^5=0$, $E_7=0$, $E_6=0$ we had to make the normalization
 \be u_4=0,\label{no8}\ee
 as it is vissible in the expression for the form $\Om_{10}$ above. Also, we note that solving the equations $E_4\dz\theta^4=0$, $E_4=0$, $E_5=0$, $E_7\dz\theta^5=0$, $E_7=0$, $E_6=0$ did \emph{not} bring any new additional variables. So, at this stage we normalized \emph{eighteen} of the group parameters: $b_6, b_7, a_{16},a_{17},a_{18}, a_{21}, a_{22}, a_{23}, a_{27},a_{28}, a_{34}, a_{35}, a_{36},a_{42}, a_{43},$
 $a_{44},$ $a_{45},a_{49}$, and were forced to kill three from the additional unknowns $\lambda_1,\lambda_2,u_1,$ $u_2,u_3, u_4$, namely $\lambda_1,\lambda_2, u_4$. So now, the number of scalar unknowns reduced to $23+4+2-18-1-2=8$. These are $b_5, a_{26}, a_{29},a_{38}, a_{46},u_1,u_2,u_3$.  Moreover it follows that not only we now solved the equations $E_1=E_2=E_3=E_4=E_5=E_6=E_7=0$, but we also have $E_8=0$, which is a consequence of our normalizations and the other 7 equations $E_i=0$, $i=1,2,\dots,7$. So to fully solve our normalization conditions (\ref{kap}) we are now left with equations involving $\der\Omega_a$, $a=1,2,\dots,13$. The equations with $\der\theta^i$, $i=1,2\dots,8$ were solved!

We now pass to solve the equations involving $\der\Omega_a$, $a=1,2,\dots,13$. Explicitly, these are: 
$$
\begin{aligned}
E_9=&\der\Om_1-(\theta^1\dz\Om_{12}+\theta^2\dz\Om_{11}+\theta^5\dz\Om_9+\theta^7\dz\Om_6-\Om_2\dz\Om_3)=0\\
E_{10}=&\der\Om_2-(\theta^1\dz\Om_{11}+\theta^3\dz\Om_{12}+\theta^4\dz\Om_9+\theta^6\dz\Om_6-\Om_1\dz\Om_2-\Om_2\dz\Om_4)=0\\
E_{11}=&\der\Om_3-(\theta^1\dz\Om_{13}+\theta^2\dz\Om_{12}+\theta^5\dz\Om_{10}+\theta^7\dz\Om_7+\Om_1\dz\Om_3+\Om_3\dz\Om_4)=0\\
E_{12}=&\der\Om_4-(\theta^1\dz\Om_{12}+\theta^3\dz\Om_{13}+\theta^4\dz\Om_{10}+\theta^6\dz\Om_7+\Om_2\dz\Om_3)=0\\
E_{13}=&\der\Om_5-(\theta^4\dz\Om_{10}+\theta^5\dz\Om_{9}-\theta^6\dz\Om_{7}-\theta^7\dz\Om_6+\theta^8\dz\Om_8)=0\\
E_{14}=&\der\Om_6-(\theta^4\dz\Om_{12}+\theta^5\dz\Om_{11}+\theta^8\dz\Om_{9}-\Om_1\dz\Om_6-\Om_2\dz\Om_7+\Om_5\dz\Om_6)=0\\
E_{15}=&\der\Om_7-(\theta^4\dz\Om_{13}+\theta^5\dz\Om_{12}+\theta^8\dz\Om_{10}-\Om_3\dz\Om_6-\Om_4\dz\Om_7+\Om_5\dz\Om_7)=0\\
E_{16}=&\der\Om_8-(-2\theta^6\dz\Om_{10}-2\theta^7\dz\Om_{9}-2\Om_5\dz\Om_8)=0\\
E_{17}=&\der\Om_9-(-\theta^6\dz\Om_{12}-\theta^7\dz\Om_{11}-\Om_1\dz\Om_{9}-\Om_2\dz\Om_{10}-\Om_5\dz\Om_9-\Om_6\dz\Om_8)=0\\
E_{18}=&\der\Om_{10}-(-\theta^6\dz\Om_{13}-\theta^7\dz\Om_{12}-\Om_3\dz\Om_{9}-\Om_4\dz\Om_{10}-\Om_5\dz\Om_{10}-\Om_7\dz\Om_8)=0\\
E_{19}=&\der\Om_{11}-(-2\Om_1\dz\Om_{11}-2\Om_2\dz\Om_{12}-2\Om_6\dz\Om_9)=0\\
E_{20}=&\der\Om_{12}-(-\Om_1\dz\Om_{12}-\Om_2\dz\Om_{13}-\Om_3\dz\Om_{11}-\Om_4\dz\Om_{12}-\Om_6\dz\Om_{10}-\Om_7\dz\Om_9)=0\\
E_{21}=&\der\Om_{13}-(-2\Om_3\dz\Om_{12}-2\Om_4\dz\Om_{13}-2\Om_7\dz\Om_{10})=0.
\end{aligned}
$$
As before we only indicate the order of solving the equations and the results:
\begin{itemize}
\item It follows that with all our normalizations made so far $E_{13}=0$ is automatically satisfied
\item Equation $E_{10}\dz\theta^{13}=0$ gives a normalization
  \be u_2=0\label{no9}\ee
\item Equation $E_9\dz\theta^2=0$ gives
  \be \Om_{12}=-\frac{u_1a_{29}}{a_{38}}\der b_5+\frac{u_1 a_{26}a_{38}^2}{ua_{29}^3}\theta^4-\frac{u_1a_{46}}{2a_{38}}\theta^6+u_5\theta^1+u_6\theta^2,\label{fo6}\ee
\item Now the $E_{12}\dz\theta^3=0$ gives new normalizations
  \be u_1=u_6=0,\label{no10},\ee
  whereas the equations $E_{11}=E_{12}=0$ tell that
  \be \Om_{13}=0,\label{fo7}\ee
  and that we have to normalize 
  \be u_5=0.\label{no11}\ee
\item Finally the equations $E_9=E_{10}=0$ solve for $\Om_{11}$, resulting in:
  \be\begin{aligned}
 \Om_{11}=& -\der u_3 + \frac{a_{38} (v_2 a_{38}^3 + u^2 a_{29}^2 a_{46})}{2 u^3 a_{29}^5} \der a_{26} -\\& \frac{4 u^3 u_3 a_{29}^5 a_{38}^2 + 3 v_2 a_{26} a_{38}^6 + 5 u^2 a_{26} a_{29}^2 a_{38}^3 a_{46} + u^3 a_{29}^5 a_{46}^2}{2 u^3 a_{29}^6 a_{38}^2} \der a_{29} + \\&\frac{4 u^3 u_3 a_{29}^5 a_{38}^2 + 6 v_2 a_{26} a_{38}^6 + 12 u^2 a_{26} a_{29}^2 a_{38}^3 a_{46} + 3 u^3 a_{29}^5 a_{46}^2}{4 u^3 a_{29}^5 a_{38}^3} \der a_{38}
 - \\&\frac{(2 u a_{26} - v_2 a_{29}) a_{38}}{4 u^2 a_{29}^3} \der a_{46}
 -u_{32}\theta^2-u_{35}\theta^5-u_{37}\theta^7-\frac{(2 a_{26} a_{38}^3 + u a_{29}^3 a_{46})^2}{4 u^2 a_{29}^6 a_{38}^2} \theta^8,
  \end{aligned}\label{fo8}\ee
  where
  $$\begin{aligned}
    u_{32}=&\Big(16 u^6 u_3^2 a_{29}^{10} a_{38}^4 + 12 u^2 v_1^2 a_{29}^6 a_{38}^6 - 
  8 u^3 w_1 a_{29}^6 a_{38}^6 - 32 u v_1 v_2 a_{26} a_{29}^3 a_{38}^9 +\\& 
  16 u^2 w_2 a_{26} a_{29}^3 a_{38}^9 + 20 v_2^2 a_{26}^2 a_{38}^{12} - 
  8 u w_3 a_{26}^2 a_{38}^{12} - 8 u^3 v_1 a_{26} a_{29}^5 a_{38}^6 a_{46} - \\&
  12 u^2 v_1 v_2 a_{29}^6 a_{38}^6 a_{46} + 
  8 u^3 w_2 a_{29}^6 a_{38}^6 a_{46} + 
  16 u^2 v_2 a_{26}^2 a_{29}^2 a_{38}^9 a_{46} + 
  16 u v_2^2 a_{26} a_{29}^3 a_{38}^9 a_{46} - \\&
  8 u^2 w_3 a_{26} a_{29}^3 a_{38}^9 a_{46} + 
  4 u^4 a_{26}^2 a_{29}^4 a_{38}^6 a_{46}^2 + 
  12 u^3 v_2 a_{26} a_{29}^5 a_{38}^6 a_{46}^2 + 
  3 u^2 v_2^2 a_{29}^6 a_{38}^6 a_{46}^2 - \\&
  2 u^3 w_3 a_{29}^6 a_{38}^6 a_{46}^2 + 
  4 u^5 a_{26} a_{29}^7 a_{38}^3 a_{46}^3 + 
  2 u^4 v_2 a_{29}^8 a_{38}^3 a_{46}^3 + u^6 a_{29}^{10} a_{46}^4\Big)\times \Big(16 u^6 a_{29}^{10} a_{38}^4\Big)^{-1}\\
  u_{35}=&\Big(-8 u^4 u_3 a_{26} a_{29}^5 a_{38}^5 + 4 u^3 u_3 v_2 a_{29}^6 a_{38}^5 - 
  2 u v_1 v_2 a_{29}^4 a_{38}^6 + 2 u^2 w_2 a_{29}^4 a_{38}^6 + 
  4 u v_2 a_{26}^2 a_{38}^9 +\\& 4 v_2^2 a_{26} a_{29} a_{38}^9 - 
  2 u w_3 a_{26} a_{29} a_{38}^9 + 2 u^3 v_1 a_{29}^6 a_{38}^3 a_{46} + 
  4 u^3 a_{26}^2 a_{29}^2 a_{38}^6 a_{46} +\\& 
  6 u^2 v_2 a_{26} a_{29}^3 a_{38}^6 a_{46} + 
  u v_2^2 a_{29}^4 a_{38}^6 a_{46} - u^2 w_3 a_{29}^4 a_{38}^6 a_{46} + 
  4 u^4 a_{26} a_{29}^5 a_{38}^3 a_{46}^2 +\\& 
  u^3 v_2 a_{29}^6 a_{38}^3 a_{46}^2 + u^5 a_{29}^8 a_{46}^3\Big)\times \Big(4 u^5 a_{29}^8 a_{38}^3 \Big)^{-1}\\
u_{37}=&\Big(4 u^3 u_3 v_2 a_{29}^5 a_{38}^3 - 2 u v_1 v_2 a_{29}^3 a_{38}^4 + 
  2 u^2 w_2 a_{29}^3 a_{38}^4 + 4 v_2^2 a_{26} a_{38}^7 - 
  2 u w_3 a_{26} a_{38}^7 + \\&4 u^5 u_3 a_{29}^7 a_{46} + 
  2 u^3 v_1 a_{29}^5 a_{38} a_{46} + 
  2 u^2 v_2 a_{26} a_{29}^2 a_{38}^4 a_{46} + 
  u v_2^2 a_{29}^3 a_{38}^4 a_{46} -\\& u^2 w_3 a_{29}^3 a_{38}^4 a_{46}\Big)\times \Big(4 u^5 a_{29}^7 a_{38} \Big)^{-1}
  \end{aligned}$$
  \item A remarkable fact now is that, it follows that the set of forms $(\Om_1,\Om_2,\dots,$ $\Om_{12},\Om_{13})$ given by (\ref{fo5}), (\ref{fo6}), (\ref{fo7}), (\ref{fo8}), together with normalizations (\ref{no1}), (\ref{no2}), (\ref{no3}), (\ref{no4}), (\ref{no5}), (\ref{no6}), (\ref{no7}), (\ref{no8}), (\ref{no9}), (\ref{no10}), (\ref{no11}) solves \emph{all} the equations $E_1=E_2=\dots=E_{21}=0$!
  \end{itemize}
Thus, taking into account all our normalizations (\ref{no1}), (\ref{no2}), (\ref{no3}), (\ref{no4}), (\ref{no5}), (\ref{no6}), (\ref{no7}), (\ref{no8}), (\ref{no9}), (\ref{no10}), (\ref{no11}), we are now left with only \emph{six} parameters $b_5, a_{26}, a_{29}, a_{38}, a_{46}, u_3$. Using them we found explicit expressions (\ref{fo0}), (\ref{fo5}), (\ref{fo6}), (\ref{fo7}), (\ref{fo8}) for all 21
forms $(\theta^1,\theta^2,\dots,\theta^8,\Om_1,\Om_2,$ $\dots,\Om_{13})$ which satisfy the normalization condition (\ref{kap}). These forms live on a $8+6=14$-dimensional manifold $M\times G_1$, which is parameterized by the points of $M$ and \emph{six} parameters $b_5, a_{26}, a_{29}, a_{38}, a_{46}, u_3$ localy describing $G_1$.

Note that with our current normalizations we have in particular:
$$\Om_3=\Om_7=\Om_{10}=\Om_{12}=\Om_{13}=0.$$
Since $21-5=16$ and the dimension of $M\times G_1$ is 14, then
only six out of the eight 1-forms $\Om_1,\Om_2,\Om_4,\Om_5,\Om_6,\Om_8,\Om_9,\Om_{11}$ may be independent. We now choose $(\theta^1,\theta^2,\dots,\theta^8,\Om_1,\Om_2,\Om_4,\Om_6,\Om_9,\Om_{11})$ as a basis of 1-forms on $M\times G_1$. This is an \emph{invariant} coframe on $M\times G_1$, so it is reasonable to ask if this defines a Cartan connection on $M\times G_1$.

The answer to this question is \emph{no}, since one can check, using our definitions (\ref{fo0}), (\ref{fo5}), (\ref{fo6}), (\ref{fo7}), (\ref{fo8}), with  (\ref{no1}), (\ref{no2}), (\ref{no3}), (\ref{no4}), (\ref{no5}), (\ref{no6}), (\ref{no7}), (\ref{no8}), (\ref{no9}), (\ref{no10}), (\ref{no11}), that:
$$\begin{aligned}
\der\Om_6=&2\Om_1\dz\Om_6-2\Om_4\dz\Om_6+\Om_6\dz\theta^8-\Om_9\dz\theta^8-\Om_{11}\dz\theta^5+\\&+F_1\Om_6\dz\theta^2+F_2\Om_6\dz\theta^5+F_3\Om_6\dz\theta^7,
\end{aligned}
$$
with
$$\begin{aligned}
  F_1=&\frac{12 u^3 u_3 a_{29}^5 a_{38}^2 + 2 u v_1 a_{29}^3 a_{38}^3 - 
   2 v_2 a_{26} a_{38}^6 + 2 u^2 a_{26} a_{29}^2 a_{38}^3 a_{46} - 
   u v_2 a_{29}^3 a_{38}^3 a_{46} + u^3 a_{29}^5 a_{46}^2}{4 u^3 a_{29}^5 a_{38}^2},\\
  F_2=&\frac{-2 u a_{26} a_{38}^3 + v_2 a_{29} a_{38}^3 + u^2 a_{29}^3 a_{46}}{u^2 a_{29}^3 a_{38}},\quad\quad\quad\quad\quad\quad\quad\quad
  F_3=\frac{v_2 a_{38}^3 + 2 u^2 a_{29}^2 a_{46}}{u^2 a_{29}^2 a_{38}}.
\end{aligned}$$
These are the last three terms in $\der\Om_6$ that prevent the coframe $(\theta^1,\theta^2,\dots,\theta^8,\Om_1,$ $\Om_2,\Om_4,\Om_6,\Om_9,\Om_{11})$ to be a basis for a Cartan connection on $M\times G_1$. This is because the forms $(\theta^1,\theta^2,\dots,\theta^8)$ are `horizontal', the forms $(\Om_1,$ $\Om_2,\Om_4,\Om_6,\Om_9,\Om_{11})$ are `vertical', and in the curvature of any Cartan connection the mixed `horizontal-vertical' terms,  $\Om_a\dz\theta^i$, as our $\Om_6\dz\theta^2$, $\Om_6\dz\theta^5$, and $\Om_6\dz\theta^7$ terms in the diffferential $\der\Om_6$, can only have \emph{constant} coefficients. This forces us to introduce \emph{three new normalizations}. These are made in such a way that $F_1$, $F_2$ and $F_3$ are \emph{constant}. Since the normalizations
\be F_1=F_2=F_3=0\label{ultn}\ee
are the simplest, and they do not contradict any group condition, such as e.g. $\det(A^i{}_j)\neq 0$, we chose them from now on.

Solving the normalization conditions (\ref{ultn}) gives
\be
u_3=-\frac{v_1a_{38}}{6u^2a_{29}^2},\quad\quad a_{26}=\frac{v_2a_{29}}{4u},\quad\quad a_{46}=-\frac{v_2a_{38}^3}{2u^2a_{29}^2}.\label{no12}\ee

So now, we are on 11-dimensional manifold, locally $M\times H$, where $H$ is parameterized by $b_5, a_{29}, a_{38}$. Taking into account all the normalizations 
(\ref{no1}), (\ref{no2}), (\ref{no3}), (\ref{no4}), (\ref{no5}), (\ref{no6}), (\ref{no7}), (\ref{no8}), (\ref{no9}), (\ref{no10}), (\ref{no11}) and (\ref{no12}) the forms $(\Om_1,\Om_2,\dots,\Om_{13})$ solving (\ref{kap}) read now:
%/home/pawel/MEGA/MEGAsync/notebooks/utah/monge_equations/2016_cartan_connection_minimal_torsion_new_paper_last_normalization.nb
\be\begin{aligned}
 \Om_1=&\frac{\der a_{29}}{a_{29}} - \frac{\der a_{38}}{2 a_{38}} -\frac{v_1a_{38}}{6u^2a_{29}^2} \theta^2 +\frac{v_2a_{38}^2}{4 u^2 a_{29}^2} \theta^5 + \frac{v_2a_{38}^2}{4 u^2 a_{29}^2}\theta^7\\
 \Om_2=&\frac{a_{29} \der b_{5}}{a_{38}} + \frac{v_1a_{38}}{3u^2a_{29}^2}\theta^1  -  \frac{v_2a_{38}^2}{4 u^2 a_{29}^2} \theta^4 -\frac{v_2a_{38}^2}{4 u^2 a_{29}^2} \theta^6\\
 \Om_3=&0\\
 \Om_4=&\frac{2 \der a_{29}}{a_{29}} - \frac{3 \der a_{38}}{2 a_{38}}- \frac{v_1a_{38}}{2u^2a_{29}^2} \theta^2 + \frac{v_2a_{38}^2}{2 u^2 a_{29}^2} \theta^5  + \frac{v_2a_{38}^2}{2 u^2 a_{29}^2}\theta^7\\
 \Om_5=&-\frac{\der a_{29}}{a_{29}} + \frac{3\der a_{38}}{2 a_{38}} +\frac{v_1a_{38}}{2u^2a_{29}^2}  \theta^2 -\frac{v_2a_{38}^2}{4 u^2 a_{29}^2} \theta^5 - \frac{v_2a_{38}^2}{4 u^2 a_{29}^2}\theta^7 - \theta^8\\
 \Om_6=&-t^4{}_{12}\theta^2-t^4{}_{15}\theta^5-t^4{}_{17} \theta^7\\
 \Om_7=&0\\
 \Om_8=&-\frac{2\der a_{29}}{a_{29}} + \frac{3\der a_{38}}{a_{38}} +\frac{v_1a_{38}}{u^2a_{29}^2}  \theta^2 -\frac{v_2a_{38}^2}{2 u^2 a_{29}^2} \theta^5 - \frac{v_2a_{38}^2}{2 u^2 a_{29}^2}\theta^7 - \theta^8\\
 \Om_9=&-t^4{}_{12}\theta^2-(3t^4{}_{15}-2t^4{}_{17})\theta^5-(2t^4{}_{15}-t^4{}_{17})\theta^7\\
 \Om_{10}=&0\\
 \Om_{11}=&r_{212}\theta^2-\tfrac43 t^4{}_{12}(\theta^5+\theta^7)\\
 \Om_{12}=&0\\
 \Om_{13}=&0.
\end{aligned}\label{fifo}\ee
In these expressions we used the definitions (\ref{in2}). We also have
$$\theta^i=A^i{}_j\om^j,$$
with $(\om^1,\om^2,\dots,\om^8)$ as in (\ref{sysc3u})-(\ref{sysc3u1}), and the matrix $(A^i{}_j)$, reduced by our normalizations from this in Proposition \ref{pr36} to:
$$A^i{}_j=\bma
  -\frac{ua_{29}^3}{a_{38}^2}&-\frac{ua_{29}^3b_5}{a_{38}^2}&0&0&0&0&0&0\\
  0&-\frac{ua_{29}^3}{a_{38}^2}&0&0&0&0&0&0\\
  -\frac{2ua_{29}^4b_5}{a_{38}^3}&-\frac{ua_{29}^4b_5^2}{a_{38}^3}&-\frac{ua_{29}^4}{a_{38}^3}&0&0&0&0&0\\
  -\frac{v_2a_{29}}{4u}&-\frac{v_2a_{29}b_5}{4u}&0&-a_{29}&-a_{29}b_5&0&0&0\\
  0&-\frac{v_2a_{38}}{4u}&0&0&-a_{38}&0&0&0\\
  \frac{v_2a_{29}}{4u}&\frac{v_2a_{29}b_5}{4u}&0&a_{29}&a_{29}b_5&\frac{ua_{29}^3}{a_{38}^3}&\frac{ua_{29}^3b_5}{a_{38}^3}&0\\
  0& \frac{v_2a_{38}}{4u}&0&0&a_{38}&0&\frac{ua_{29}^2}{a_{38}^2}&0\\
   0&-\frac{v_2^2a_{38}^3}{16u^3a_{29}^2}&0&0&-\frac{v_2a_{38}^3}{2u^2a_{29}^2}&0&0&-\frac{a_{38}^3}{ua_{29}^2}
   \ema.$$

One can now easilly check that $\Om_5$ and $\Om_8$ satisfy
$$\Om_5=3\Om_1-2\Om_4-\theta^8,\quad\quad\Om_8=6\Om_1-4\Om_4-\theta^8,$$
so that all the relations (\ref{coco1}) from Corollary \ref{coco} are proven.

To see that the system of forms $(\theta^1,\theta^2,\dots,\theta^8,\Om_1,\Om_2,\Om_4)$ defines a princial connection on $M\times H$ we first note that these forms constitute a coframe on  $M\times H$, and then calculate their differentials. These are:
\be
\begin{aligned}
\der\theta^1&=\Om_1\dz\theta^1+\Om_2\dz\theta^2+\Om_4\dz\theta^1-\theta^4\dz\theta^7-\theta^5\dz\theta^6,\\
\der\theta^2&=2\Om_1\dz\theta^2-2\theta^5\dz\theta^7,\\
\der\theta^3&=2\Om_2\dz\theta^1+2\Om_4\dz\theta^3-2\theta^4\dz\theta^6+\theta^2\dz\theta^5+\theta^2\dz\theta^7,\\
\der\theta^4&=3\Om_1\dz\theta^4+\Om_2\dz\theta^5-\Om_4\dz\theta^4+t^4_{~12}\theta^1\dz\theta^2+t^4_{~15}\theta^1\dz\theta^5+t^4_{~17}\theta^1\dz\theta^7\\&+\theta^4\dz\theta^8+\theta^6\dz\theta^8,\\
\der\theta^5&=4\Om_1\dz\theta^5-2\Om_4\dz\theta^5+t^4_{~15}\theta^2\dz\theta^5+t^4_{~17}\theta^2\dz\theta^7+\theta^5\dz\theta^8+\theta^7\dz\theta^8,\\
\der\theta^6&=-6\Om_1\dz\theta^4-3\Om_1\dz\theta^6+\Om_2\dz\theta^7+4\Om_4\dz\theta^4+3\Om_4\dz\theta^6\\&-t^4_{~12}\theta^1\dz\theta^2+(2t^4_{~17}-3t^4_{~15})\theta^1\dz\theta^5+(t^4_{~17}-2t^4_{~15})\theta^1\dz\theta^7
-\theta^4\dz\theta^8-\theta^6\dz\theta^8,\\
\der\theta^7&=-6\Om_1\dz\theta^5-2\Om_1\dz\theta^7+4\Om_4\dz\theta^5+2\Om_4\dz\theta^7+(2t^4_{~17}-3t^4_{~15})\theta^2\dz\theta^5\\&
+(t^4_{~17}-2t^4_{~15})\theta^2\dz\theta^7
-\theta^5\dz\theta^8-\theta^7\dz\theta^8,\\
\der\theta^8&=6\Om_1\dz\theta^8-4\Om_4\dz\theta^8-2t^4_{~12}\theta^2\dz\theta^5+2t^4_{~17}\theta^5\dz\theta^7,\\
\der\Om_1&=-\tfrac13t^4_{~12}\theta^2\dz\theta^5-\tfrac13t^4_{~12}\theta^2\dz\theta^7+(t^4_{~17}-t^4_{~15})\theta^5\dz\theta^7,\\
\der\Om_2&=-\Om_1\dz\Om_2-\Om_2\dz\Om_4+r_{212}\theta^1\dz\theta^2-\tfrac43t^4_{~12}\theta^1\dz\theta^5-\tfrac43t^4_{~12}\theta^1\dz\theta^7\\&+t^4_{~12}\theta^2\dz\theta^4+t^4_{~12}\theta^2\dz\theta^6+(2t^4_{~17}-3t^4_{~15})\theta^4\dz\theta^5\\&+(t^4_{~17}-2t^4_{~15})\theta^4\dz\theta^7+t^4_{~15}\theta^5\dz\theta^6-t^4_{~17}\theta^6\dz\theta^7,\\
\der\Om_4&=0.
\end{aligned}\label{syuu}
\ee
Now, one can interprete these equations as follows:

We have a 3-dimensional freedeom - related to the residual group parameters $a_{29}, a_{38}, b_5$ - in defining the forms $(\theta^1,\dots,\theta^8,\Om_1,\Om_2,\Om_4)$. For example, the simplest choice $a_{29}=a_{38}=1,b_5=0$,
defines a section $$\sigma:M\to M\times H,\quad\mathrm{with}\quad\sigma(x)=(x,a_{29}=1,a_{38}=1,b_5=0)$$ of the bundle $M\times H\to M$, and 
a coframe 
$$\bar{\omega}{}^i:=\sigma^*(\theta^i),\quad i=1,2,\dots, 8,$$
on $M$. Having chosen the section $\sigma$ we also have the corresponding 1-forms $$\bar{\Om}{}_1=\sigma^*(\Om_1),\quad\bar{\Om}{}_2=\sigma^*(\Om_2),\quad\bar{\Om}{}_4=\sigma^*(\Om_4)$$ on $M$. Explicitly we have:
\be\begin{aligned}
&\bar{\om}{}^1=-u\om^1\\
&\bar{\om}{}^2=-u\om^2\\
&\bar{\om}{}^3=-u\om^3\\
&\bar{\om}{}^4=-\om^4-\frac{v_2}{4u}\om^1\\
&\bar{\om}{}^5=-\om^5-\frac{v_2}{4u}\om^2\\
&\bar{\om}{}^6=u\om^6+\om^4+\frac{v_2}{4u}\om^1\\
&\bar{\om}{}^7=u\om^7+\om^5+\frac{v_2}{4u}\om^2\\
&\bar{\om}{}^8=-\frac{1}{u}\om^8-\frac{v_2}{2u^2}\om^5-\frac{v_2^2}{16u^3}\om^2\\
\end{aligned}\ee
and
\be\begin{aligned}
&\bar{\Om}_1=\frac{v_1}{6u^2}\bar{\om}^2-\frac{v_2}{4u^2}\bar{\om}^5-\frac{v_2}{4u^2}\bar{\om}^7,\\
&\bar{\Om}_2=-\frac{v_1}{3u^2}\bar{\om}^1+\frac{v_2}{4u^2}\bar{\om}^4+\frac{v_2}{4u^2}\bar{\om}^6,\\
&\bar{\Om}_4=\frac{v_1}{2u^2}\bar{\om}^2-\frac{v_2}{2u^2}\bar{\om}^5-\frac{v_2}{2u^2}\bar{\om}^7,\\
\end{aligned}\label{omegabar}\ee
We now collect the last three 1-forms to an $8\times 8$  matrix
{\tiny\be
(\bar{\gamma}{}^i{}_j)=\bma
-\bar{\Om}_1-\bar{\Om}_4&-\bar{\Om}_2&0&0&0&0&0&0\\0&-2\bar{\Om}_1&0&0&0&0&0&0\\-2\bar{\Om}_2&0&-2\bar{\Om}_4&0&0&0&0&0\\0&0&0&-3\bar{\Om}_1+\bar{\Om}_4&-\bar{\Om}_2&0&0&0\\0&0&0&0&-4\bar{\Om}_1+2\bar{\Om}_4&0&0&0\\0&0&0&6\bar{\Om}_1-4\bar{\Om}_4&0&3\bar{\Om}_1-3\bar{\Om}_4&-\bar{\Om}_2&0\\0&0&0&0&6\bar{\Om}_1-4\bar{\Om}_4&0&2\bar{\Om}_1-2\bar{\Om}_4&0\\
  0&0&0&0&0&0&0&-6\bar{\Om}_1+4\bar{\Om}_4
  \ema,\label{s2}\ee}
on $M$. 
Due to the first eight equations (\ref{syuu}), evaluated at $a_{29}=a_{38}=1,b_5=0$, this matrix of 1-forms satisfies: 
{\tiny \be
(\der\bar{\omega}^i+\bar{\gamma}^i{}_j\dz\bar{\omega}^j)=\bma
-\bar{\om}^4\dz\bar{\om}^7-\bar{\om}^5\dz\bar{\om}^6\\
-2\bar{\om}^5\dz\bar{\om}^7\\
\bar{\om}^2\dz\bar{\om}^5+\bar{\om}^2\dz\bar{\om}^7-2\bar{\om}^4\dz\bar{\om}^6\\
s_1\bar{\om}^1\dz\bar{\om}^2+s_2\bar{\om}^1\dz\bar{\om}^5-s_3\bar{\om}^1\dz\bar{\om}^7+\bar{\om}^4\dz\bar{\om}^8+\bar{\om}^6\dz\bar{\om}^8\\
s_4\bar{\om}^2\dz\bar{\om}^5-s_5\bar{\om}^2\dz\bar{\om}^7+\bar{\om}^5\dz\bar{\om}^8+\bar{\om}^7\dz\bar{\om}^8\\
-s_6\bar{\om}^1\dz\bar{\om}^2-s_7\bar{\om}^1\dz\bar{\om}^5+s_8\bar{\om}^1\dz\bar{\om}^7-\bar{\om}^4\dz\bar{\om}^8-\bar{\om}^6\dz\bar{\om}^8\\
-s_9\bar{\om}^2\dz\bar{\om}^5-s_{10}\bar{\om}^2\dz\bar{\om}^7-\bar{\om}^5\dz\bar{\om}^8-\bar{\om}^7\dz\bar{\om}^8\\
-s_{11}\bar{\om}^2\dz\bar{\om}^5-s_{12}\bar{\om}^5\dz\bar{\om}^7
\ema:=\bar{\mathcal T}{}^i,\label{s3}\ee}
with functions $s_1,s_2,\dots,s_{12}$ on $M$ given by:
$$\begin{aligned}
  &s_1=s_6=\tfrac12s_{11}=-\frac{I_3}{8u^4},\quad s_2=s_4=\frac{I_2}{6u^2}+\frac{I_1}{16u^4},\quad s_3=s_5=\tfrac12s_{12}=-\frac{I_1}{16u^4},\\
  &\quad\quad s_7=s_2+\frac{I_2}{3u^2},\quad s_8=s_3-\frac{I_2}{3u^2},\\
  &\quad\quad\quad\quad s_9=3s_2+2s_3,\quad s_{10}=2s_2+s_3,
\end{aligned}$$
where $I_1,I_2,I_3$ are as in (\ref{in1}).

The last three equations (\ref{syuu}) mean that:
\be
\begin{aligned}
  \bar{\mathcal K}{}^i{}_j=&\der\bar{\gamma}{}^i{}_j+\bar{\gamma}{}^i{}_k\dz\bar{\gamma}{}^k{}_j=\\
&{\tiny \bma
\bar{K}_1+\bar{K}_4&\bar{K}_2&0&0&0&0&0&0\\0&2\bar{K}_1&0&0&0&0&0&0\\2\bar{K}_2&0&2\bar{K}_4&0&0&0&0&0\\0&0&0&3\bar{K}_1-\bar{K}_4&\bar{K}_2&0&0&0\\0&0&0&0&4\bar{K}_1-2\bar{K}_4&0&0&0\\0&0&0&-6\bar{K}_1+4\bar{K}_4&0&-3\bar{K}_1+3\bar{K}_4&\bar{K}_2&0\\0&0&0&0&-6\bar{K}_1+4\bar{K}_4&0&-2\bar{K}_1+2\bar{K}_4&0\\
  0&0&0&0&0&0&0&6\bar{K}_1-4\bar{K}_4
  \ema}
  \end{aligned}\label{krzho}\ee
with
\be\begin{aligned}
  \bar{K}_1=&-\frac{I_3}{24u^4}\bar{\om}{}^2\dz\bar{\om}{}^5-\frac{I_3}{24u^4}\bar{\om}{}^2\dz\bar{\om}{}^7+\frac{I_2}{6u^2}\bar{\om}{}^5\dz\bar{\om}{}^7,\\
  \bar{K}_2=&\frac{R}{9u^4}\bar{\om}{}^1\dz\bar{\om}{}^2-\frac{I_3}{6u^4}\bar{\om}{}^1\dz\bar{\om}{}^5-\frac{I_3}{6u^4}\bar{\om}{}^1\dz\bar{\om}{}^7+\frac{I_3}{8u^4}\bar{\om}{}^2\dz\bar{\om}{}^4+\frac{I_3}{8u^4}\bar{\om}{}^2\dz\bar{\om}{}^6+\\
  &\frac{I_1+8I_2u^2}{16u^4}\bar{\om}{}^4\dz\bar{\om}{}^5+\frac{3I_1+16I_2u^2}{48u^4}\bar{\om}{}^4\dz\bar{\om}{}^7-\frac{3I_1+8I_2u^2}{48u^4}\bar{\om}{}^5\dz\bar{\om}{}^6+\\&\frac{I_1}{16u^4}\bar{\om}{}^6\dz\bar{\om}{}^7,\\
  \bar{K}_4=&0.
  \end{aligned}\label{krzHo1}
\ee

%/home/pawel/MEGA/MEGAsync/notebooks/utah/monge_equations/2016_cartan_connection_minimal_torsion_new_paper_last_normalization_all.nb
As a final step in the proof we reparameterize $H$ with:
\be a_{29}=a^3b,\quad a_{38}=a^2b,\quad b_5=c,\quad\quad bc\neq 0.\label{defc}\ee
With these definitions, one can then check that the forms $(\theta^1,\theta^2,\dots,\theta^8,\Om_1,\Om_2,\Om_4)$, which appear in (\ref{fifo}), and which satisfy the system (\ref{syuu}), are given by
\be
\begin{aligned}
  &\theta^i={\mathcal A}^i{}_j\bar{\om}^j,\\
  &\gamma^i{}_j={\mathcal A}^i{}_k\bar{\gamma}{}^k{}_l({\mathcal A}^{-1})^l{}_j-\der {\mathcal A}^i{}_k({\mathcal A}^{-1})^k{}_j.\end{aligned}\label{conh}\ee
Here the matrix $({\mathcal A}^i{}_j)$ is as in (\ref{mah}),
%$$A^i{}_j={\mathcal A}^i{}_k\bar{A}{}^k{}_j,$$
and the forms $(\Om_1,\Om_2,\Om_4)$ can be red off from the matrix $\gamma^i{}_j$ given in (\ref{conh}) via the definition (\ref{coco2}).
   
In view of this we have now placed the forms $(\theta^1,\theta^2,\dots,\theta^8,\Om_1,\Om_2,\Om_4)$ on an 11-dimensional (trivial) fiber bundle $H\to H\times M\to M$, with the fiber $H$ beeing identified as a 3-dimensional Lie group of $8\times 8$ real matrices:
$$H=\{~\glg(8,\bbR)\ni ({\mathcal A}^i{}_j)~|~ ({\mathcal A}^i{}_j)\,\mathrm{as\,in}\, {\mathrm (\ref{mah})}~\}.$$
This bundle becomes a $H$-principal fiber bundle over $M$. In this interpretation, the $\mathfrak{h}$-valued \emph{matrix} of 1-forms $(\gamma^i{}_j)$, as defined in (\ref{conh}), (\ref{s2}), (\ref{mah}) and (\ref{defc}), \emph{becomes a principal connection} on $H\to H\times M\to M$. The connection $\gamma$ in general has \emph{nonzero torsion}
$${\mathcal T}^i=\der\theta^i+\gamma^i{}_{j}\dz\theta^j={\mathcal A}^i{}_k\bar{\mathcal T}{}^k,$$
since $\bar{\mathcal T}{}^k$, defined in (\ref{s3}), is not zero whenever the basic invariants $I_1$, $I_2$ and $I_3$ are not zero,
$$|I_1|+|I_2|+|I_3|\neq 0.$$
The curvature ${\mathcal K}=({\mathcal K}^i{}_{j})$ of $\gamma$ is $${\mathcal K}^i{}_j=\der\gamma^i{}_j+\gamma^i{}_k\dz\gamma^k{}_j={\mathcal A}^i{}_k\bar{\mathcal K}{}^k{}_l({\mathcal A}^{-1})^l{}_j,$$
and again is nonzero, whenever
$$|I_1|+|I_2|+|I_3|+|R|\neq 0,$$
as can be seen from (\ref{krzho})-(\ref{krzHo1}). These equations show also that the curvature ${\mathcal K}$ of $\gamma$ is identically zero, or to say it differently: that the principal connection $\gamma$ is flat, if and only if the basic invariants $I_1= I_2= I_3= R= 0$. This justifies the name `basic invariants' atributed in (\ref{in1}) to $I_1,I_2,I_3$ and $R$. 

This finishes the proof of the Theorem and the Corollary.
\end{proof}

\subsection{The flat model here: $I_1= 0$. Eleven dimensional symmetry group}
%/home/pawel/notebooks/utah/monge_equations/sp3_in8_only_p_s_group_reduction_reduce_quintic_root_1_redukcja1_most_symmetric.nb
Looking at the formulae (\ref{in2}), we see that, for example $I_1$, is a relative invariant of the system (\ref{sysc3u}). Thus in enumarting nonequivalent structures among the ones described by (\ref{sysc3u}), we have to distinguish between these with $I_1= 0$, and these with $I_1\neq 0$. Any structure with the $I_1=0$ branch is definitely nonequivalent to any structure from the $I_1\neq 0$ branch. Concentrating on the $I_1= 0$ for a while, we have the following proposition.
%%/home/pawel/MEGA/MEGAsync/notebooks/utah/monge_equations/2016_sp3_in8_typN_system_as_in_Prop2.11.nb andProp2.11_T1eq0.nb
\begin{proposition}
Every structure (\ref{sysc3u}) with the relative invariant 
$$I_1= 0$$
is locally equivalent to the structure (\ref{sysc3u}) with
$$u= 1.$$
The Monge system corresponding to this unique structure is given by
\be\dot{z}_{11}=\dot{x}_1^2,\quad \dot{z}_{12}=\dot{x}_1\dot{x}_2,\quad z_{22}=\dot{x}_2^2+\tfrac13 \dot{x}_1^3.\label{di111}\ee
The corresponding distribution
\be
\begin{aligned}
{\mathcal D}=\Span\Big(\partial_t&+\dot{x}_1^2\partial_{z_{11}}+\dot{x}_1\dot{x}_2\partial_{z_{12}}+(\dot{x}_2^2+\tfrac13\dot{x}_1^3)\partial_{z_{22}}+\dot{x}_1\partial_{x_1}+\dot{x}_2\partial_{x_2},\\&\partial_{\dot{x}_1},\quad\partial_{\dot{x}_2}\Big)\end{aligned}\label{di11}\ee
has 11-dimensional group of symmetries.
\end{proposition}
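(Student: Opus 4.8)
The plan is to reduce the whole statement to a single propagation fact: the vanishing of the one relative invariant $I_1$ already forces \emph{all} of $I_1,I_2,I_3,R$ to vanish. Once this is in hand, Theorem \ref{ft} delivers a torsion-free, flat principal connection $\gamma$, from which homogeneity, rigidity and the symmetry count follow at once; the explicit normal form is then read off from Proposition \ref{exu1}. So I would organize the proof as (i) the propagation $I_1\equiv 0\Rightarrow I_2=I_3=R=0$, (ii) vanishing of torsion and curvature of $\gamma$ and the resulting constant-coefficient structure equations, and (iii) exhibition of the representative $z_{22}=\dot x_2^2+\tfrac13\dot x_1^3$.

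For the propagation step I would differentiate $I_1=5v_2^2-4uw_3$ from (\ref{in1}) using the closed system (\ref{sysc3u1}). Since
\[
\der u=v_1\om^2+v_2\om^7,\quad \der v_2=w_2\om^2+2v_1\om^5+w_3\om^7,\quad \der w_3=z_3\om^2+4w_2\om^5+z_4\om^7+2v_1\om^8,
\]
the coefficient of $\om^8$ in $\der I_1$ is exactly $-8uv_1$. If $I_1\equiv 0$ then $\der I_1\equiv 0$, and because $u\neq 0$ this yields $v_1=0$, i.e. $I_2=0$. But $v_1\equiv 0$ then forces $\der v_1=w_1\om^2+w_2\om^7\equiv 0$, so $w_1=w_2=0$; substituting back into (\ref{in1}) gives $I_3=3v_1v_2-2uw_2=0$ and $R=4v_1^2-3uw_1=0$. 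Thus $I_1=0$ alone collapses the entire tower of invariants.

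With $I_1=I_2=I_3=R=0$ the torsion coefficients $s_1,\dots,s_{12}$ in (\ref{s3}) all vanish, so $\gamma$ is torsion-free, and (\ref{krzho})--(\ref{krzHo1}) show its curvature $\bar{\mathcal K}$ vanishes as well. Consequently $t^4_{~12}=t^4_{~15}=t^4_{~17}=r_{212}=0$ and the structure equations (\ref{syuu}) on the bundle $H\to{\mathcal G}^{11}\to M$ reduce to a system with \emph{constant} (indeed universal) coefficients. This is the Maurer--Cartan system of an $11$-dimensional Lie algebra; by Cartan's equivalence theorem for coframes with constant structure functions, ${\mathcal G}^{11}$ is locally a Lie group, every structure on the branch $I_1=0$ carries the \emph{same} structure equations and is therefore locally equivalent to every other, and the symmetry group acts simply transitively on ${\mathcal G}^{11}$, hence has dimension $11$. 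Since $u\neq 0$ the main invariant $T=ub_6^5\neq 0$, so the geometry is \emph{not} the flat ${\spg}(3,\bbR)$ model and the symmetry dimension cannot jump above $11$.

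It remains to pin down the normal form: taking $h(y_2)=\tfrac13 y_2^3$ in Proposition \ref{exu1} gives $u=\tfrac12 h_{222}=1$ together with $I_1=\tfrac14\big(5h^{(4)}{}^2-4h^{(3)}h^{(5)}\big)=0$ and $I_2=I_3=R=0$, so this system lies on the branch $I_1=0$; by the uniqueness just proven it is the representative, and its Monge form is precisely $\dot z_{11}=\dot x_1^2$, $\dot z_{12}=\dot x_1\dot x_2$, $z_{22}=\dot x_2^2+\tfrac13\dot x_1^3$, with the distribution (\ref{di11}). The one genuinely delicate point is the propagation step: one must recognize that it is the $\om^8$-component of $\der I_1$ (not any purely algebraic relation among the four invariants) that produces $v_1=0$, after which the closure relations force everything else. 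The subsequent verifications — that all $s_i$ and all curvature components vanish and that (\ref{syuu}) then has constant coefficients — are routine substitutions and need not be carried out in detail.
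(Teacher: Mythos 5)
Correct, and essentially the paper's own argument: the paper likewise obtains $v_1=0$ (hence $w_1=w_2=0$ and $I_2=I_3=R=0$) by differentiating the constraint $I_1=0$ in the form $w_3=5v_2^2/(4u)$, producing its identity (\ref{eqa}) whose $\om^8$-coefficient $-2v_1$ is just a rescaling of your $-8uv_1$, then observes that the vanishing of all four basic invariants turns (\ref{syuu}) into the constant-coefficient Maurer--Cartan system (\ref{sys11}) of an 11-dimensional group in which $u$ no longer appears (giving both the local equivalence of all such structures, in particular to $u=1$, and the 11-dimensional symmetry group), and finally exhibits the representative via $h=\tfrac13\dot{x}_1^3$ in Proposition \ref{exu1}, exactly as you do. One peripheral slip in your write-up: with $I_1=I_2=I_3=R=0$ the connection $\gamma$ is \emph{not} torsion-free --- the torsion (\ref{s3}) retains its constant-coefficient terms such as $-\bar{\om}{}^4\dz\bar{\om}{}^7-\bar{\om}{}^5\dz\bar{\om}{}^6$ --- but this is harmless, since your argument (like the paper's) uses only that the structure equations acquire constant coefficients, not that the torsion vanishes.
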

\begin{proof}
Assuming that $I_1=0$ and $u\neq 0$ means, via formula (\ref{in1}), that $$w_3=\frac{5v_2^2}{4u}.$$
  Then the first, the third and the sixth from equations (\ref{sysc3u1}) show
  that
  \be\begin{aligned}
  -2&v_1\om^8+\\&\frac{10u v_2 w_2-4u^2 z_3-5v_1 v_2^2}{4u^2}\om^2+
  \frac{5v_1v_2-4uw_2}{u}\om^5+\frac{15v_2^3-8u^2z_4}{8u^2}\om^7=0.\end{aligned}\label{eqa}\ee
  Since the $\om^2,\om^5,\om^7\om^8$ are parts of the coframe on $M$ than, we in particular, have
  $$v_1=0.$$
  This, when compared with the second, the fourth and the fifth of equations (\ref{sysc3u1}) gives
  $$w_1=w_2=z_1=z_2=z_3=0.$$
  Finally equation (\ref{eqa}) gives
  $$z_4=\frac{15v_2^3}{8u^2}.$$
  Summarizing, we see that the system with $I_1=0$ is given on $M$ by means of a coframe $(\om^1,\om^2,\dots,\om^8)$ satisfying (\ref{sysc3u}) and
  \be\der u= v_2\om_7,\quad \&\quad \der v_2=\frac{5v_2^2}{4u}\om^7.\label{eqb}\ee
  It is a matter of checking that the so defined system (\ref{sysc3u}), (\ref{eqb}) is closed: $\der^2\om^i= 0$ for all $i=1,2,\dots,8$, $\der^2u=\der^2v_2=0$, and \emph{more imporatntly}, that in addition to $I_1= 0$, it also satisfies $I_2= I_3=R= 0$.

  So what we have now proven is that
  $$(~I_1= 0~)\Longrightarrow (~I_2\equiv I_3= R= 0~),$$
  so in turn we have proven that if $I_1= 0$, all the relative invariants of the Monge structure with $u\neq 0$ identicaly vanish. Thus all such systems are equivalent to the invariant system
\be
\begin{aligned}
&\der\theta^1=\Om_1\dz\theta^1+\Om_2\dz\theta^2+\Om_4\dz\theta^1-\theta^4\dz\theta^7-\theta^5\dz\theta^6,\\
&\der\theta^2=2\Om_1\dz\theta^2-2\theta^5\dz\theta^7,\\
&\der\theta^3=2\Om_2\dz\theta^1+2\Om_4\dz\theta^3-2\theta^4\dz\theta^6+\theta^2\dz\theta^5+\theta^2\dz\theta^7,\\
&\der\theta^4=3\Om_1\dz\theta^4+\Om_2\dz\theta^5-\Om_4\dz\theta^4+\theta^4\dz\theta^8+\theta^6\dz\theta^8,\\
&\der\theta^5=4\Om_1\dz\theta^5-2\Om_4\dz\theta^5+\theta^5\dz\theta^8+\theta^7\dz\theta^8,\\
&\der\theta^6=-6\Om_1\dz\theta^4-3\Om_1\dz\theta^6+\Om_2\dz\theta^7+4\Om_4\dz\theta^4+3\Om_4\dz\theta^6
-\theta^4\dz\theta^8-\theta^6\dz\theta^8,\\
&\der\theta^7=-6\Om_1\dz\theta^5-2\Om_1\dz\theta^7+4\Om_4\dz\theta^5+2\Om_4\dz\theta^7
-\theta^5\dz\theta^8-\theta^7\dz\theta^8,\\
&\der\theta^8=6\Om_1\dz\theta^8-4\Om_4\dz\theta^8,\\
&\der\Om_1=0,\\
&\der\Om_2=-\Om_1\dz\Om_2-\Om_2\dz\Om_4,\\
&\der\Om_4=0,
\end{aligned}
\label{sys11}\ee
obtained from (\ref{syuu}), by putting $I_1=I_2=I_3=R=0$.

This system is a system of Maurer-Cartan equations for the Maurer-Cartan forms $(\theta^1,\theta^2,\dots,\theta^8,\Om_1,\Om_2,\Om_4)$ on a 11-dimensional \emph{group} $G^{11}$. In particular, the structure constants of this group
in the Maurer-Cartan basis $(\theta^1,\theta^2,\dots,\theta^8,\Om_1,\Om_2,\Om_4)$ can be easily read off from (\ref{sys11}). The construction given in Theorem \ref{ft} and Corollary \ref{coco} shows that $G^{11}$ is a fiber bundle $H\to G^{11}\to M=G^{11}/H$, in which the base manifold $M$ is a homogeneous space $M=G^{11}/H$. This is equipped with a Monge structure having $u\neq 0$ and $I_1=0$. The structure has 11-dimensional symmetry group equal to $G^{11}$. Since in equations (\ref{sys11}) the variable $u$ is not present, it means that \emph{all} systems (\ref{sysc3u}), (\ref{eqb}) with $u\neq 0$ are equivalent. In particular they are equivalent to the structure with $u=1$.

In Theorem \ref{exa} we have solved the structural equations (\ref{sysc3u}) for the Monge systems with $u\neq 0$ in full generality. Then in Proposition \ref{exu1} we have found a subclass of Monge systems with $I_2=I_3=R=0$ and $u=\tfrac12 h_{\dot{x}_1\dot{x}_1\dot{x}_1}$ parametrized by one function $h$ of one variable $\dot{x}_1$. In particular, this class contains an example of a Monge system with $u=1$. This corresponds to $h=\tfrac13\dot{x}_1^3$. Quick look at the invariant $I_1$ in Proposition \ref{exu1} calculated for $h=\tfrac13\dot{x}_1^3$, shows that $I_1=0$. Thus, an example from Proposition \ref{exu1} with $h=\tfrac13\dot{x}_1^3$ has $I_1=0$ and $u=1$. It therefore gives a unique (up to local equivalence) Monge structure with $I_1=0$ and $u\neq 0$. Proposition \ref{exu1} shows that the corresponding Monge ODEs are as claimed. This finishes the proof. 
\end{proof}
\subsection{Structures with $I_1\neq 0$}
If $I_1\neq 0$ then the coeeficient
$$t^4{}_{17}=\frac{I_1}{16u^4a^4}$$
in (\ref{syuu}) is not equal to zero. Here $a$ is a group parameter in ${\mathcal A}\in H$ as in (\ref{mah}). 
\bigbreak

%/home/pawel/MEGA/MEGAsync/notebooks/utah/monge_equations/2016_cartan_connection_minimal_torsion_new_paper_last_normalization_all_reparam.nb
This enables us to make a \emph{new normalization}:
$$t^4{}_{17}=1.$$
This choses a hypersurface 
$$Q=\{H\times M\ni (a,b,c,x)\quad |\quad a=\frac{(\epsilon I_1)^{1/4}}{2u},\quad\epsilon=\mathrm{sign}(I_1)\},$$
in $H\to H\times M\to M$ transversal to the fibers. This reduces the system (\ref{syuu}) to \emph{ten} dimensions, where only \emph{ten} out of the eleven forms $(\theta^1,\theta^2,\dots,\theta^8,\Om_1,\Om_2,\Om_4)$ are linearly independent.

%/home/pawel/MEGA/MEGAsync/notebooks/utah/monge_equations/2016_cartan_connection_minimal_torsion_new_paper_last_normalization_all_reparam_redukcja_do_10_1.nb
In particular on this 10-dimensional manifold $Q$ we have:
$$\begin{aligned}
  \Om_4=&\Om_1+\frac{2u^2(10I_1I_2+42 I_3v_2-63I_2v_2^2+24u^2z_3)}{3b\epsilon I_1^2}\theta^2-\\
  &\frac{16I_3u^2+9bI_1v_2-15bv_2^3+8bu^2z_4}{2b\sqrt{\epsilon I_1^3}}\theta^5-\frac{9I_1v_2-15v_2^3+8u^2z_4}{2\sqrt{\epsilon I_1^3}}\theta^7+\frac{2I_2u^2}{bI_1}\theta^8.\end{aligned}
$$
Furthermore, on this 10-dimensional hypersurface $H\times M\subset Q\to M$ the first equation in the system (\ref{syuu}) becomes:
\be
\begin{aligned}
  \der\theta^1&=2\Om_1\dz\theta^1+\Om_2\dz\theta^2-\frac{2u^2(10I_1I_2+42 I_3v_2-63I_2v_2^2+24u^2z_3)}{3b\epsilon I_1^2}\theta^1\dz\theta^2+\\&\frac{16I_3u^2+9bI_1v_2-15bv_2^3+8bu^2z_4}{2b\sqrt{\epsilon I_1^3}}\theta^1\dz\theta^5+\frac{9I_1v_2-15v_2^3+8u^2z_4}{2\sqrt{\epsilon I_1^3}}\theta^1\dz\theta^7-\\&\frac{2I_2u^2}{bI_1}\theta^1\dz\theta^8 -\theta^4\dz\theta^7-\theta^5\dz\theta^6.
\end{aligned}\label{syun}
\ee
It is now convenient to introduce abreviations
\be \begin{aligned}S_1=&\frac{2I_2u^2}{bI_1},\quad S_2=\frac{8I_3u^2}{b\sqrt{\epsilon I_1^3}},\quad S_3=\frac{4u^2(10I_1I_2+42 I_3v_2-63I_2v_2^2+24u^2z_3)}{3b\epsilon I_1^2},\\& S_4=\frac{9I_1v_2-15v_2^3+8u^2z_4}{2\sqrt{\epsilon I_1^3}}\end{aligned},\label{fsa}\ee
and a 1-form $\Theta$ related to the 1-form $\Om_2$ via:
$$\Theta=\Om_2+\tfrac12 S_3\theta^1-(S_2+S_4)\theta^4-S_4\theta^6.$$
Then the equations (\ref{syuu}) pulbackked to $Q\subset(H\times M)$ are:
 \be
\begin{aligned}
\der\theta^1=&2\Om_1\dz\theta^1+\Theta\dz\theta^2-S_3\theta^1\dz\theta^2+(S_2+S_4)\theta^1\dz\theta^5+\\&S_4\theta^1\dz\theta^7-S_1\theta^1\dz\theta^8-(S_2+S_4)\theta^2\dz\theta^4-S_4\theta^2\dz\theta^6-\\&\theta^4\dz\theta^7-\theta^5\dz\theta^6,\\
\der\theta^2=&2\Om_1\dz\theta^2-2\theta^5\dz\theta^7,\\
\der\theta^3=&2\Om_1\dz\theta^3+2\Theta\dz\theta^1-2(S_2+S_4)\theta^1\dz\theta^4-2S_4\theta^1\dz\theta^6+\\&S_3\theta^2\dz\theta^3+\theta^2\dz\theta^5+\theta^2\dz\theta^7+2(S_2+S_4)\theta^3\dz\theta^5+\\&2S_4\theta^3\dz\theta^7-2S_1\theta^3\dz\theta^8-2\theta^4\dz\theta^6,\\
\der\theta^4=&2\Om_1\dz\theta^4+\Theta\dz\theta^5-\epsilon S_2\theta^1\dz\theta^2+(\epsilon+\tfrac43\epsilon S_1-\tfrac12S_3)\theta^1\dz\theta^5+\\&\epsilon \theta^1\dz\theta^7-\tfrac12S_3\theta^2\dz\theta^4-S_4\theta^4\dz\theta^7+(1+S_1)\theta^4\dz\theta^8-\\&S_4\theta^5\dz\theta^6+\theta^6\dz\theta^8,\\
\der\theta^5=&2\Om_1\dz\theta^5+(\epsilon+\tfrac43\epsilon S_1-S_3)\theta^2\dz\theta^5+\epsilon \theta^2\dz\theta^7-2S_4\theta^5\dz\theta^7+\\&(1+2S_1)\theta^5\dz\theta^8+\theta^7\dz\theta^8,\\
\der\theta^6=&-2\Om_1\dz\theta^4+\Theta\dz\theta^7+\epsilon S_2\theta^1\dz\theta^2-\epsilon(1+4S_1) \theta^1\dz\theta^5-\\&(\epsilon+\tfrac83\epsilon S_1+\tfrac12S_3) \theta^1\dz\theta^7+2S_3\theta^2\dz\theta^4+\tfrac32S_3\theta^2\dz\theta^6+\\&4(S_2+S_4)\theta^4\dz\theta^5+(S_2+5S_4)\theta^4\dz\theta^7-(1+4S_1)\theta^4\dz\theta^8-\\&3(S_2+S_4)\theta^5\dz\theta^6
+4S_4\theta^6\dz\theta^7-(1+3S_1)\theta^6\dz\theta^8,\\
\der\theta^7=&-2\Om_1\dz\theta^5-(\epsilon+4\epsilon S_1-2S_3)\theta^2\dz\theta^5-(\epsilon+\tfrac83\epsilon S_1-S_3) \theta^2\dz\theta^7-\\&2(S_2-S_4)\theta^5\dz\theta^7-(1+4S_1)\theta^5\dz\theta^8-(1+2S_1)\theta^7\dz\theta^8,\\
\der\theta^8=&2\Om_1\dz\theta^8+2\epsilon S_2\theta^2\dz\theta^5-2S_3\theta^2\dz\theta^8+2\epsilon \theta^5\dz\theta^7+\\&4(S_2+S_4)\theta^5\dz\theta^8+4S_4\theta^7\dz\theta^8,\\
\der\Om_1=&\tfrac13\epsilon S_2\theta^2\dz\theta^5+\tfrac13\epsilon S_2\theta^2\dz\theta^7-\tfrac43\epsilon S_1\theta^5\dz\theta^7,\end{aligned}
\label{ss2}\ee
and
\be
\der\Theta=-S_1\Theta\dz\theta^8+\sum s _{ij}\theta^i\dz\theta^j,
\label{ss3}\ee
where the terms involving $s_{ij}$, $i<j=1,2,\dots, 8$ are not so important.

What is important is that the coefficient at the $\Theta\dz\theta^8$ term in the last equation, as well as the coefficients at the $\theta^1\dz\theta^8$ term in the equation for $\der\theta^1$ is
\be -S_1=-\frac{2I_2u^2}{bI_1}.\label{ss17}\ee
The importance of this observation is that it shows that in the currently considered situation, when $I_1\neq0$, we have to distinguish two cases. Either
\begin{itemize}
\item[a)] $I_2=0$, or
\item[b)] $I_2\neq 0$.
\end{itemize}
Obviously the Monge structures belonging to one of this cases are nonequivalent from the structures form the other case.

We now introduce an $(8\times 8)$-matrix-valued 1 form
$$
(\Gamma{}^i{}_j)=\bma
-2\Om_1&-\Theta&0&0&0&0&0&0\\0&-2\Om_1&0&0&0&0&0&0\\-2\Theta&0&-2\Om_1&0&0&0&0&0\\0&0&0&-2\Om_1&-\Theta&0&0&0\\0&0&0&0&-2\Om_1&0&0&0\\0&0&0&2\Om_1&0&0&-\Theta&0\\0&0&0&0&2\Om_1&0&0&0\\
  0&0&0&0&0&0&0&-2\Om_1
  \ema.$$
  With this notation, the first eight equations (\ref{ss2}) become:
  $$\der\theta^i+\Gamma^i{}_j\dz\theta^j=T^i,$$
  where the `torsion' $(T^i)$ has coefficients:
  $$\begin{aligned}
    T^1=& -S_3\theta^1\dz\theta^2+(S_2+S_4)(\theta^1\dz\theta^5-\theta^2\dz\theta^4)+S_4(\theta^1\dz\theta^7-\theta^2\dz\theta^6)-S_1\theta^1\dz\theta^8-\\&\theta^4\dz\theta^7-\theta^5\dz\theta^6,\\
 T^2=& -2\theta^5\dz\theta^7,\\
T^3=&2(S_2+S_4)(\theta^3\dz\theta^5-\theta^1\dz\theta^4)+2S_4(\theta^3\dz\theta^7-\theta^1\dz\theta^6)+S_3\theta^2\dz\theta^3-2S_1\theta^3\dz\theta^8+\\&\theta^2\dz\theta^5+\theta^2\dz\theta^7-2\theta^4\dz\theta^6,\\
T^4=&-S_2\epsilon\theta^1\dz\theta^2+(\epsilon+\tfrac43S_1\epsilon-\tfrac12S_3)\theta^1\dz\theta^5-S_4(\theta^4\dz\theta^7+\theta^5\dz\theta^6)+\epsilon\theta^1\dz\theta^7-\\&\tfrac12S_3\theta^2\dz\theta^4+(1+S_1)\theta^4\dz\theta^8+\theta^6\dz\theta^8,\\
T^5=&-2S_4\theta^5\dz\theta^7+(\epsilon+\tfrac43S_1\epsilon-S_3)\theta^2\dz\theta^5+\epsilon \theta^2\dz\theta^7+
(1+2S_1)\theta^5\dz\theta^8+\theta^7\dz\theta^8,\\
T^6=&S_2\epsilon\theta^1\dz\theta^2-(1+4S_1)\epsilon\theta^1\dz\theta^5-(\epsilon+\tfrac83\epsilon S_1+\tfrac12S_3)\theta^1\dz\theta^7+\\&2S_3(\theta^2\dz\theta^4+\tfrac34\theta^2\dz\theta^6)+(S_2+S_4)(4\theta^4\dz\theta^5-3\theta^5\dz\theta^6)+
  4S_4\theta^6\dz\theta^7+\\&(S_2+5S_4)\theta^4\dz\theta^7-(1+4S_1)\theta^4\dz\theta^8-(1+3S_1)\theta^6\dz\theta^8,\\
 T^7=& 2(S_4-S_2)\theta^5\dz\theta^7-(\epsilon+4S_2\epsilon-2S_3)\theta^2\dz\theta^5-(\epsilon+\tfrac83S_1\epsilon-S_3)\theta^2\dz\theta^7-\\&(1+4S_1)\theta^5\dz\theta^8-(1+2S_1)\theta^7\dz\theta^8,\\
 T^8=& 2S_2\epsilon\theta^2\dz\theta^5-2S_3\theta^2\dz\theta^8+4(S_2+S_4)\theta^5\dz\theta^8+4S_4\theta^7\dz\theta^8+2\epsilon\theta^5\dz\theta^7.
\end{aligned}$$
  On the other hand, the last equation (\ref{ss2}) and equation (\ref{ss3}) can be colectively written as:
  $$\der\Gamma^i{}_j+\Gamma^i{}_k\dz\Gamma^k{}_j=K^i{}_j,$$
  with
  $$\begin{aligned}
(K{}^i{}_j)=&S_1\tiny{\bma
0&1&0&0&0&0&0&0\\0&0&0&0&0&0&0&0\\2&0&0&0&0&0&0&0\\0&0&0&0&1&0&0&0\\0&0&0&0&0&0&0&0\\0&0&0&0&0&0&1&0\\0&0&0&0&0&0&0&0\\
  0&0&0&0&0&0&0&0
  \ema}\Theta\dz\theta^8+\\&\quad\quad\quad\quad\quad\quad\mathrm{terms~of ~the~form}~\tiny{\bma
*&**&0&0&0&0&0&0\\0&*&0&0&0&0&0&0\\2**&0&*&0&0&0&0&0\\0&0&0&*&**&0&0&0\\0&0&0&0&*&0&0&0\\0&0&0&-*&0&0&**&0\\0&0&0&0&-*&0&0&0\\
  0&0&0&0&0&0&0&*
  \ema}\theta^l\dz\theta^k.\end{aligned}$$
  \subsection{Case $I_1\neq 0$, $I_2=0$. Reduction to ten dimensions and new connection}
The expression for $(K^i{}_j)$ above shows that if
  $$I_2=0\Longleftrightarrow S_1=0,$$
  and we have only `horizontal terms' $\theta^k\dz\theta^l$ in $(K^i{}_j)$, then $(\Gamma^i{}_j)$ may be interpreted as a \emph{principal connection} on $Q$ with \emph{curvature} $(K^i{}_j)$. In particular, in such a case $Q$ locally becomes a principal fiber bundle $H_0\to Q\to M$, with $H_0$ a Lie group with the Lie algebra
  $$\mathfrak{h}_0=\{\gla(8,\bbR)\ni {\bf a}~|~ ({\bf a}^i{}_j)=\tiny{\bma
B&C&0&0&0&0&0&0\\0&B&0&0&0&0&0&0\\2C&0&B&0&0&0&0&0\\0&0&0&B&C&0&0&0\\0&0&0&0&B&0&0&0\\0&0&0&-B&0&0&C&0\\0&0&0&0&-B&0&0&0\\
  0&0&0&0&0&0&0&B
  \ema},\,B,C\in\bbR\}.$$
  The fibers of the bundle $H_0\to H_0\times M\to M$ are tangent to the integrable distribution $$Ann=\{X,Y\in \Gamma(\mathrm{T}Q)~|~X\hook\theta^i=Y\hook\theta^i=0,~i=1,2,\dots, 8\}.$$ Thus, if $I_2=0$ the collective quantity $(T^i, K^i{}_j)$ gets interpreted as respective \emph{torsion}, $(T^i)$, and \emph{curvature}, $(K^i{}_j)$, of the principal connection $(\Gamma^i{}_j)$.

  We have the following Theorem.

 %/home/pawel/MEGA/MEGAsync/notebooks/utah/monge_equations/2016_cartan_connection_minimal_torsion_new_paper_last_normalization_all_reparam_T2eq0_cartan_con.nb
\begin{theorem}\label{dzie}
  Every Monge structure (\ref{sysc3u}) with $I_1\neq 0$ and $I_2=0$ defines a \emph{unique} local reduction of its 11-dimensional structural bundle $H\to{\mathcal G}^{11}\to M$ to a 10-dimensional principal fiber bundle $H_0\to Q\to M$ with uniquely defined principal $\mathfrak{h}_0$-valued connection
  $$
(\Gamma{}^i{}_j)=\bma
-2\Om_1&-\Theta&0&0&0&0&0&0\\0&-2\Om_1&0&0&0&0&0&0\\-2\Theta&0&-2\Om_1&0&0&0&0&0\\0&0&0&-2\Om_1&-\Theta&0&0&0\\0&0&0&0&-2\Om_1&0&0&0\\0&0&0&2\Om_1&0&0&-\Theta&0\\0&0&0&0&2\Om_1&0&0&0\\
  0&0&0&0&0&0&0&-2\Om_1
  \ema.$$
The principal connection $(\Gamma^i{}_j)$ has torsion
  $$T^i=\der\theta^i+\Gamma^i{}_j\dz\theta^j,$$
  such that
  $$(T^i)=\bma S_4(\theta^1\dz\theta^5+\theta^1\dz\theta^7-\theta^2\dz\theta^4-
  \theta^2\dz\theta^6)-\theta^4\dz\theta^7-\theta^5\dz\theta^6\\
  -2\theta^5\dz\theta^7\\
2S_4(-\theta^1\dz\theta^4-\theta^1\dz\theta^6+\theta^3\dz\theta^5+
  \theta^3\dz\theta^7)+\theta^2\dz\theta^5+\theta^2\dz\theta^7-2\theta^4\dz\theta^6\\
S_4(-\theta^4\dz\theta^7-\theta^5\dz\theta^6)+\epsilon(\theta^1\dz\theta^5+
\theta^1\dz\theta^7)+\theta^4\dz\theta^8+\theta^6\dz\theta^8\\
-2S_4\theta^5\dz\theta^7+\epsilon(\theta^2\dz\theta^5+\theta^2\dz\theta^7)+
\theta^5\dz\theta^8+\theta^7\dz\theta^8\\
S_4(4\theta^4\dz\theta^5+5\theta^4\dz\theta^7-3\theta^5\dz\theta^6+
  4\theta^6\dz\theta^7)-\epsilon(\theta^1\dz\theta^5+\theta^1\dz\theta^7)-\theta^4\dz\theta^8-\theta^6\dz\theta^8\\
  2S_4\theta^5\dz\theta^7-\epsilon(\theta^2\dz\theta^5+\theta^2\dz\theta^7)-\theta^5\dz\theta^8-\theta^7\dz\theta^8\\
  4S_4(\theta^5\dz\theta^8+\theta^7\dz\theta^8)+2\epsilon\theta^5\dz\theta^7
  \ema,$$
  and the curvature
  $$K^i{}_j=\der\Gamma^i{}_j+\Gamma^i{}_k\dz\Gamma^k{}_j,$$
  such that
  $$(K^i{}_j)=-S_5\tiny{\bma
0&1&0&0&0&0&0&0\\0&0&0&0&0&0&0&0\\2&0&0&0&0&0&0&0\\0&0&0&0&1&0&0&0\\0&0&0&0&0&0&0&0\\0&0&0&0&0&0&1&0\\0&0&0&0&0&0&0&0\\
  0&0&0&0&0&0&0&0
  \ema}(\theta^4\dz\theta^5+\theta^4\dz\theta^7-\theta^5\dz\theta^6+\theta^6\dz\theta^7).$$

  In these equations the invariant 
$$\epsilon=\mathrm{sign}(I_1)=\pm1,$$
and the functions $S_4$ and $S_5$ are
  \be\begin{aligned}
  &S_4=\frac{9I_1v_2-15v_2^3+8u^2z_4}{2\sqrt{\epsilon I_1^3}},\\
  &S_5=\frac{64I_1u^3z_9-22I_1^3-9I_1^2v_2^2-180I_1v_2^4+675v_2^6-16I_1u^2v_2z_4-720u^2v_2^3z_4+192u^4z_4^2}{4\epsilon I_1^3},\end{aligned}
\label{ss6}\ee
where the quantities $u$, $v_2$, $z_4$, $I_1$ and $z_9$ defined in (\ref{sysc3u})-(\ref{sysc3u1}).

  The torsion $(T^i)$ and the curvature $(K^i{}_j)$ of the principal connection $(\Gamma^i{}_j)$ define the system of all fundamental invariants of the structures with $I_1\neq 0$ and $I_2=0$. In particular, we have
  \be\der S_4=(\epsilon+3S_4^2+S_5)(\theta^5+\theta^7).\label{ss5}\ee
Thus, all the invariants are obtained in terms of the differentiation of $S_4$. 
\end{theorem}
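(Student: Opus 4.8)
The plan is to build directly on the 11-dimensional structure bundle $H\to{\mathcal G}^{11}\to M$ of Theorem \ref{ft}, whose coframe $(\theta^1,\dots,\theta^8,\Om_1,\Om_2,\Om_4)$ satisfies (\ref{syuu}), and to exploit $I_1\neq 0$ exactly as in the discussion preceding the statement. Since $t^4{}_{17}=I_1/(16u^4a^4)$ is then nowhere zero, I would impose the single normalization $t^4{}_{17}=1$. This solves uniquely for the group parameter $a=(\epsilon I_1)^{1/4}/(2u)$ with $\epsilon=\sgn(I_1)$, and hence cuts out a well-defined hypersurface $Q\subset H\times M$ transversal to the fibres; uniqueness of the reduction is exactly the uniqueness of this normalization. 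This drops the eleven dimensions to ten and leaves a residual two-parameter group $H_0$ (parameters $b,c$) with Lie algebra $\mathfrak{h}_0$ as displayed. The pullback of (\ref{syuu}) to $Q$ is the system (\ref{ss2})--(\ref{ss3}), written with the abbreviations (\ref{fsa}) and the adapted form $\Theta=\Om_2+\tfrac12 S_3\theta^1-(S_2+S_4)\theta^4-S_4\theta^6$.

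The decisive step is to show that the hypothesis $I_2=0$ collapses almost all of the higher jet data. Because $I_2=v_1$, the closure relations (\ref{sysc3u1}) give $\der v_1=w_1\om^2+w_2\om^7$, so demanding $v_1\equiv 0$ forces $w_1=w_2=0$; then (\ref{in1}) immediately yields $I_3=-2uw_2=0$ and $R=-3uw_1=0$. Iterating down the tower (\ref{sysc3u1}) — using that $\der w_1,\der w_2,\der z_1,\der z_2$ must all vanish — successively kills $z_1,z_2,z_3,z_5,z_6,z_7$, leaving only $u,v_2,z_4,z_8,z_9$ as nontrivial. Consequently the coefficients (\ref{fsa}) reduce to $S_1=S_2=S_3=0$, and the surviving invariant content is packaged into the two functions $S_4$ and $S_5$ of (\ref{ss6}). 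This implication $(I_2=0)\Rightarrow(I_3=R=0)$ is the conceptual crux and distinguishes the branch sharply from $I_2\neq 0$.

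With $S_1=0$ the principal-bundle interpretation becomes automatic: in the general expression for $(K^i{}_j)$ the only vertical--horizontal term was the multiple of $\Theta\dz\theta^8$ proportional to $S_1$, so its vanishing leaves a curvature built solely from horizontal products $\theta^k\dz\theta^l$. Hence $(\Gamma^i{}_j)$ is a genuine $\mathfrak{h}_0$-valued principal connection on $H_0\to Q\to M$, with fibres tangent to the integrable annihilator of $(\theta^1,\dots,\theta^8)$. Substituting $S_1=S_2=S_3=0$ into the torsion $T^i=\der\theta^i+\Gamma^i{}_j\dz\theta^j$ and curvature $K^i{}_j=\der\Gamma^i{}_j+\Gamma^i{}_k\dz\Gamma^k{}_j$ already assembled before the theorem produces the stated formulae, with all torsion dependence concentrated in $S_4$ and the curvature a single function times the displayed matrix; matching this against $\der\Theta$ from (\ref{ss3}) identifies that function as $-S_5$ and yields the explicit form (\ref{ss6}), which is where $z_9$ first enters.

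The final assertion (\ref{ss5}) is then a closure computation: differentiating the definition of $S_4$ and using the reduced relations $\der z_4=z_8\om^2+z_9\om^7$, $\der u=v_2\om^7$ and $\der v_2$, invariance forces $\der S_4$ onto the single direction $\theta^5+\theta^7$, with coefficient $\epsilon+3S_4^2+S_5$. The main labour — and the step most likely to conceal sign or scaling slips — is precisely this verification: one must carry the closure of the EDS out to the order of $z_9$ and confirm that the $z_9$-dependence of $\der S_4$ reassembles exactly into $S_5$. Once (\ref{ss5}) holds, $S_5$ is a differential function of $S_4$, so $(T^i,K^i{}_j)$ together with their covariant derivatives generate the entire invariant algebra, establishing both the theorem and the claim that all invariants are obtained by differentiating $S_4$.
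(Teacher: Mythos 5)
Your proposal takes essentially the same route as the paper's own proof: use $I_1\neq 0$ to normalize $t^4{}_{17}$ and cut out the ten-dimensional $Q$ carrying the EDS (\ref{ss2})--(\ref{ss3}); use $I_2=v_1\equiv 0$ to collapse the jet tower (\ref{sysc3u1}), giving $I_3=R=0$ and hence $S_1=S_2=S_3=0$; then substitute into the torsion and curvature and check closure. However, there is a concrete error in your collapse of the tower. You iterate only through $\der v_1$, $\der w_1$, $\der w_2$, $\der z_1$, $\der z_2$ and conclude that ``only $u,v_2,z_4,z_8,z_9$'' remain nontrivial. You must also differentiate the identity $z_3\equiv 0$: by (\ref{sysc3u1}),
$$0=\der z_3=z_7\om^2+4z_2\om^5+z_8\om^7+2w_1\om^8,$$
and since $z_2=w_1=0$ this forces $z_7=0$ and, crucially, $z_8=0$. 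The correctly reduced system therefore has $\der z_4=z_9\om^7$ --- this is the paper's (\ref{sysc3t1t2}) --- and not $\der z_4=z_8\om^2+z_9\om^7$ as you wrote.

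This omission is not cosmetic: it contradicts the very formula (\ref{ss5}) that your last paragraph is supposed to verify. In the reduced system $\der u$, $\der v_2$ and $\der I_1$ are all proportional to $\om^7$, and on $Q$ one has $\theta^5+\theta^7\propto\om^7$ while $\theta^2\propto\om^2$; hence the only possible source of a $\theta^2$-component in $\der S_4$ is the term $8u^2z_4$ in the numerator of $S_4$, entering through $\der z_4$. If $z_8\neq 0$ were allowed, one would get
$$\der S_4=\frac{4u^2z_8}{\sqrt{\epsilon I_1^3}}\,\om^2+(\cdots)\,\om^7,$$
which is incompatible with $\der S_4=(\epsilon+3S_4^2+S_5)(\theta^5+\theta^7)$. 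So either your tower collapse or your closure claim must give way, and it is the former. Once $z_8=0$ is included, the remainder of your argument --- the principal-connection interpretation forced by $S_1=0$, the substitution of $S_1=S_2=S_3=0$ into $(T^i)$ and $(K^i{}_j)$, the identification of the curvature coefficient $-S_5$ from $\der\Theta$, and the Bianchi/closure computation yielding (\ref{ss5}) --- goes through exactly as in the paper.
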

\begin{proof}
  If $I_2=0$ we have that $v_1=0$ in equations (\ref{sysc3u1}). Then it is easy to see that in such a situation, these equations, when written in terms of the basic invarinat $I_1$ and $u$, read:
  %/home/pawel/MEGA/MEGAsync/notebooks/utah/monge_equations/2016_cartan_connection_minimal_torsion_new_paper_last_normalization_all_reparam_T2eq0.nb
\be\der u=v_2\om^7,\quad\der v_2=\frac{5v_2^2-I_1}{4u}\om^7,\quad\der I_1=\frac{15v_2^3-3I_1v_2-8u^2z_4}{2u}\om^7,\quad\der z_4=z_9\om^7,\label{sysc3t1t2}\ee
with $(\om^1,\om^2,\dots,\om^8)$ and $u\neq 0$ as in (\ref{sysc3u}). In particular we also have $$z_1=z_2=z_3=z_5=z_6=z_7=z_8=0,$$ and
$$I_3=R=0.$$
Thus the vanishing of the invariant $I_2$ and nonvanishing of $I_1$, implies the vanishing of the invariants $I_3$ and $R$: 
$$(I_1\neq 0\,\,\&\,\,I_2=0)\Longrightarrow (I_3=0\,\,\&\,\, R=0).$$
This in turn means that:
\be S_1=S_2=S_3=0,\quad S_4=\frac{9I_1v_2-15v_2^3+8u^2z_4}{2\sqrt{\epsilon I_1^3}}.\label{ss4}\ee
Therefore the most general Monge system with $I_1\neq 0$ and $I_2=0$ is given by the coframe $(\om^1,\om^2,\dots,\om^8)$ on $M$ satsifying (\ref{sysc3u}) and (\ref{sysc3t1t2}). It is now a matter of checking that for such a system (\ref{sysc3u}) the EDS (\ref{ss2})-(\ref{ss3}), when reduced by the conditions (\ref{ss4}), becomes
\be
\begin{aligned}
\der\theta^1=&2\Om_1\dz\theta^1+\Theta\dz\theta^2+S_4\theta^1\dz\theta^5+S_4\theta^1\dz\theta^7-S_4\theta^2\dz\theta^4-S_4\theta^2\dz\theta^6-\\&\theta^4\dz\theta^7-\theta^5\dz\theta^6,\\
\der\theta^2=&2\Om_1\dz\theta^2-2\theta^5\dz\theta^7,\\
\der\theta^3=&2\Om_1\dz\theta^3+2\Theta\dz\theta^1-2S_4\theta^1\dz\theta^4-2S_4\theta^1\dz\theta^6+\theta^2\dz\theta^5+\theta^2\dz\theta^7+2S_4\theta^3\dz\theta^5+\\&2S_4\theta^3\dz\theta^7-2\theta^4\dz\theta^6,\\
\der\theta^4=&2\Om_1\dz\theta^4+\Theta\dz\theta^5+\epsilon\theta^1\dz\theta^5+\epsilon \theta^1\dz\theta^7-S_4\theta^4\dz\theta^7+\theta^4\dz\theta^8-\\&S_4\theta^5\dz\theta^6+\theta^6\dz\theta^8,\\
\der\theta^5=&2\Om_1\dz\theta^5+\epsilon\theta^2\dz\theta^5+\epsilon \theta^2\dz\theta^7-2S_4\theta^5\dz\theta^7+\theta^5\dz\theta^8+\theta^7\dz\theta^8,\\
\der\theta^6=&-2\Om_1\dz\theta^4+\Theta\dz\theta^7-\epsilon \theta^1\dz\theta^5-\epsilon \theta^1\dz\theta^7+4S_4\theta^4\dz\theta^5+5S_4\theta^4\dz\theta^7-\theta^4\dz\theta^8-\\&3S_4\theta^5\dz\theta^6
+4S_4\theta^6\dz\theta^7-\theta^6\dz\theta^8,\\
\der\theta^7=&-2\Om_1\dz\theta^5-\epsilon\theta^2\dz\theta^5-\epsilon\theta^2\dz\theta^7+2S_4\theta^5\dz\theta^7-\theta^5\dz\theta^8-\theta^7\dz\theta^8,\\
\der\theta^8=&2\Om_1\dz\theta^8+2\epsilon \theta^5\dz\theta^7+4S_4\theta^5\dz\theta^8+4S_4\theta^7\dz\theta^8,\\
\der\Om_1=&0,\\
\der\Theta=&S_5(\theta^4\dz\theta^5+\theta^4\dz\theta^7-\theta^5\dz\theta^6+\theta^6\dz\theta^7).\end{aligned}
\label{ssc}\ee
A quick check shows that this is egquivalent to $T^i=\der\theta^i+\Gamma^i{}_j\dz\theta^j$ and $K^i{}_j=\der\Gamma^i{}_j+\Gamma^i{}_k\dz\Gamma^k{}_j$, with the connection $(\Gamma^i{}_j)$, torsion $(T^i)$ and the curvature $(K^i{}_j)$ as claimed in the theorem.
%/home/pawel/MEGA/MEGAsync/notebooks/utah/monge_equations/2016_cartan_connection_minimal_torsion_new_paper_last_normalization_all_reparam_redukcja_do_10_1_abstract.nb
The equation (\ref{ss5}) is a consequences of Bianchi identities $\der^2=0$ applied to the system (\ref{ssc}). It can also be checked directly using definitions (\ref{ss6}), (\ref{in1}), and the relations (\ref{sysc3u1}).
\end{proof}
It is worthwhile to note that system (\ref{ssc}), (\ref{ss5}) can not be reduced to lower dimensions. This is because the group $H_0$ parameters $b$ and $c$ that \emph{a'priori} could appear in the definitions of the invariants $\epsilon$, $S_4$ and $S_5$ are not present there.

We close this section with the fololowing proposition.
%/home/pawel/MEGA/MEGAsync/notebooks/utah/monge_equations/2016_solve_new_distribution_paper.nb
\begin{proposition}\label{I1I2}
All possible Monge systems with $I_1\neq 0$ and $I_2=0$ are given in Proposition \ref{exu1}. They correspond to Monge ODEs:
\be\dot{z}_{11}=\dot{x}_1^2,\quad \dot{z}_{12}=\dot{x}_1\dot{x}_2,\quad z_{22}=\dot{x}_2^2+h(\dot{x}_1).\label{dj111}\ee
The invariant $S_4$ for them is:
\be
S_4=\frac{15h^{(4)}{}^3-18h^{(3)}h^{(4)}h^{(5)}+4h^{(3)}{}^2h^{(6)}}{\sqrt{\epsilon(5h^{(4)}{}^2-4h^{(3)}h^{(5)})^3}},\label{dd}\ee
and we have to have
\be
I_1=\tfrac14(5h^{(4)}{}^2-4h^{(3)}h^{(5)})\neq 0.\label{dd1}\ee
\end{proposition}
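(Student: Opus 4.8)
The plan is to reduce the statement to the explicit integration of Theorem \ref{exa} and then to substitute into the formula for $S_4$ furnished by Theorem \ref{dzie}. First I recall that, under the standing assumption $u\neq 0$ of this section, every Monge structure whose main invariant has a fivefold root is by Theorem \ref{exa} described by the coframe (\ref{systemu}) built from a single function $h=h(y_2,y_7)$ with $u=\tfrac12 h_{222}$, and that its basic invariant is $I_2=\tfrac12 h_{2227}$. Hence $I_2=0$ is equivalent to $h_{2227}=0$, i.e.\ to $h_{222}$ depending on $y_2$ alone.

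The decisive step is to show that $h_{2227}=0$ lets us, up to Monge equivalence, discard the $y_7$-dependence of $h$ altogether, landing precisely in the family of Proposition \ref{exu1}. The function $h$ enters the coframe only through $h_{22}$ in $\om^3$, and $h_{2227}=0$ forces $h_{22}=G(y_2)+\psi(y_7)$ with $G'=h_{222}$. The offending piece $-\tfrac12\psi(y_7)\,\der y_7$ of $\om^3$ is exact, so I would remove it by the change of coordinate $y_6\mapsto y_6-\tfrac12\Psi(y_7)$, $\Psi'=\psi$; this leaves $\om^1,\om^2,\om^4,\dots,\om^8$ unchanged (none involve $y_6$) and replaces $h$ by a function $\tilde h(y_2)$ with $\tilde h''=G$, while preserving $u=\tfrac12\tilde h'''$. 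Thus every structure with $I_1\neq 0$ and $I_2=0$ is equivalent to one with $h_7=0$, which by Proposition \ref{exu1} is governed by the Monge ODEs (\ref{dj111}) and satisfies $I_1=\tfrac14\big(5(h^{(4)})^2-4h^{(3)}h^{(5)}\big)$; the hypothesis $I_1\neq 0$ is exactly (\ref{dd1}).

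It then remains to compute $S_4$. Specializing the expressions of Theorem \ref{exa} to $h=h(y_2)$ --- all $y_7$-derivatives vanish, so the $y_4$- and $y_1$-dependent terms drop --- and setting $y_2=\dot x_1$, I record $u=\tfrac12 h^{(3)}$, $v_2=\tfrac12 h^{(4)}$, $z_4=\tfrac12 h^{(6)}$, together with $I_1=\tfrac14\big(5(h^{(4)})^2-4h^{(3)}h^{(5)}\big)$. Substituting these into $S_4=(9I_1v_2-15v_2^3+8u^2z_4)/(2\sqrt{\epsilon I_1^3})$ from Theorem \ref{dzie}, the numerator collapses to $\tfrac14\big(15(h^{(4)})^3-18h^{(3)}h^{(4)}h^{(5)}+4(h^{(3)})^2h^{(6)}\big)$ and the denominator to $\tfrac14\sqrt{\epsilon\big(5(h^{(4)})^2-4h^{(3)}h^{(5)}\big)^3}$; cancelling the common factor $\tfrac14$ gives exactly (\ref{dd}).

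The concluding substitution is pure algebra and poses no difficulty. The genuinely non-mechanical point --- and the one I expect to be the main obstacle --- is the equivalence argument of the second paragraph: one must verify that the residual $y_7$-dependence of $h$ allowed by $I_2=0$ is pure gauge, removable by an admissible Monge transformation, so that the phrase ``given in Proposition \ref{exu1}'' is a complete characterization and not merely an instance. This is where one rules out that $h_{2227}=0$ might produce structures lying outside the one-function family of Proposition \ref{exu1}.
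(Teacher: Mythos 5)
Your proposal is correct and follows essentially the same route as the paper's own proof: identify $I_2=0$ with $h_{2227}=0$ via Theorem \ref{exa}, absorb the resulting $y_7$-dependent piece of $h_{22}$ by the coordinate shift $y_6\mapsto y_6-\tfrac12\Psi(y_7)$ (the paper writes $y_6\mapsto y_6-g(y_7)$ with $h_{22}=f(y_2)+2g'(y_7)$), and then invoke Proposition \ref{exu1}. Your final paragraph substituting $u=\tfrac12 h^{(3)}$, $v_2=\tfrac12 h^{(4)}$, $z_4=\tfrac12 h^{(6)}$ into the formula for $S_4$ from Theorem \ref{dzie} is an explicit verification of (\ref{dd}) that the paper leaves implicit, and it checks out.
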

\begin{proof}
  Proof follows from Theorem \ref{exa}. Looking at the explicit expression for $I_2$ given in Theorem \ref{exa} for the most general Monge system with $T$ having quintic root, we see that $I_2=0$ correspond to functions $h$ such that $h_{2227}=0$. Thus the most general $h$ having $I_2=0$ satisfies $h_{22}=f(y_2)+2g'(y_7)$ with some differentiable functions $f=f(y_2)$ and $g=g(y_7)$. But $h$ appears in the coframe (\ref{systemu}) defining these Monge systems in the 1-form $\om^3=\der y_6+2y_3\der y_5-\tfrac12h_{22}\der y_7$ only. Also, the coordinate $y_6$ does not appear in any other $\om^i$ than $\om^3$. Since we have:
  $$\om^3=\der y_6+2y_3\der y_5-\tfrac12f(y_2)\der y_7-\der g(y_7)=\der \Big(y_6-g(y_7)\Big)+2y_3\der y_5-\tfrac12f(y_2)\der y_7,$$
  then changing the coordinate $y_6$ into $y_6\to y_6-g(y_7)$, we see that the function $g(y_7)$ totally disapears from the system (\ref{systemu}). Thus all the Monge structures with with $h=f(y_2)+g(y_7)$ are locally equivalent to the Monge structures with the corresponding $h$ function given by $h=f(y_2)$. Then the proof of Proposition \ref{exu1} shows that such structures are equivalent to the structures associated with the Monge ODEs (\ref{dj111}). 
\end{proof}
\subsection{Ten dimensional symmetry group}
As we have noticed in the proof of Theorem \ref{dzie} the Monge systems with $I_1\neq 0$ and $I_2=0$ are genuinly defined on 10-dimensional manifold and are characterized by the structural function $S_4$. If among these systems there are such which have 10-dimensional transitive group of symmetries, then this structural function must be constant: 
$$\der S_4=0.$$
We have the following theorem.
\begin{theorem}\label{10} All Monge structures with the main invariant having the root with quintic multiplicity that have 10-dimensional transitive group of local symmetries correspond to the case $I_1\neq 0$, $I_2=0$ and $S_4=\mathrm{const}$. A class of Monge ODEs corresponding to these structures is given by
  $$\dot{z}_{11}=\dot{x}_1^2,\quad \dot{z}_{12}=\dot{x}_1\dot{x}_2,\quad z_{22}=\dot{x}_2^2+\dot{x}_1^s,\quad\quad s=\mathrm{const}\in\bbR.$$
  The constant invariant $S_4$ for these structures is
  $$S_4=\frac{s-1}{\sqrt{\epsilon(s+1)(s-3)}},$$
  where
  $$\epsilon=\mathrm{sign}\big((s+1)(s-3)\big).$$
  These structures have $I_1\neq 0$ iff $$s\neq -1,0,1,2,3.$$
\end{theorem}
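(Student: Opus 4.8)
The plan is to prove the theorem in two stages: first the invariant-theoretic characterization, that a $10$-dimensional transitive symmetry group occurs exactly in the branch $I_1\neq 0$, $I_2=0$, $S_4=\mathrm{const}$; and then the explicit realization and exhaustiveness of the family $z_{22}=\dot x_2^2+\dot x_1^{\,s}$. For the characterization I would first dispose of the competing branches. The branch $I_1=0$ was already shown (in the preceding proposition) to carry an $11$-dimensional symmetry algebra, so it is excluded. In the branch $I_1\neq 0$, $I_2\neq 0$ the coefficient $-S_1=-\tfrac{2I_2u^2}{bI_1}$ of the vertical--horizontal term $\Theta\dz\theta^8$ in $\der\Theta$ (equation (\ref{ss3})) is nonzero and depends on the residual group parameter $b$; this permits a further normalization ($S_1=\mathrm{const}\neq 0$), fixing $b$ and reducing the structure bundle to $9$ dimensions, so those structures admit at most a $9$-dimensional symmetry algebra and are not of the present type. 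In the remaining branch $I_1\neq 0$, $I_2=0$, Theorem \ref{dzie} exhibits the geometry as an $\{e\}$-structure on the $10$-dimensional manifold $Q$ with coframe $(\theta^1,\dots,\theta^8,\Om_1,\Theta)$ whose single essential invariant is $S_4$, every other invariant being produced from $S_4$ by the differentiation rule (\ref{ss5}). Since Monge symmetries lift bijectively to automorphisms of this $\{e\}$-structure, the symmetry algebra has dimension at most $10$, with equality precisely when all structure functions are constant; by (\ref{ss5}) this happens iff $S_4=\mathrm{const}$. When $S_4=c$, equation (\ref{ss5}) forces $S_5=-\epsilon-3c^2$ and hence every higher invariant to be constant as well, so by the classical rigidity of $\{e\}$-structures with constant structure functions the coframe equations (\ref{ssc}) are the Maurer--Cartan equations of a $10$-dimensional Lie group $G^{10}$ acting simply transitively on $Q$ and transitively on $M=G^{10}/H_0$; in particular the local equivalence class is determined by the pair $(\epsilon,c)$ alone.

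For the realization I would invoke Proposition \ref{I1I2}: every structure with $I_1\neq0$, $I_2=0$ is locally equivalent to a Monge system $\dot z_{11}=\dot x_1^2$, $\dot z_{12}=\dot x_1\dot x_2$, $z_{22}=\dot x_2^2+h(\dot x_1)$, with $I_1$ and $S_4$ given by (\ref{dd})--(\ref{dd1}). Substituting $h=\dot x_1^{\,s}$ and abbreviating $P=s(s-1)(s-2)$, a direct differentiation gives $5h^{(4)2}-4h^{(3)}h^{(5)}=P^2(s+1)(s-3)\,\dot x_1^{\,2s-8}$, whence $I_1=\tfrac14P^2(s+1)(s-3)\dot x_1^{\,2s-8}$ and $\epsilon=\mathrm{sgn}\big((s+1)(s-3)\big)$; this is nonzero exactly when $s\notin\{-1,0,1,2,3\}$. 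In the numerator of (\ref{dd}) the factor $(s-3)$ comes out and the residual bracket $15(s-3)^2-18(s-3)(s-4)+4(s-4)(s-5)$ collapses to $(s-1)(s+1)$, so after cancelling the common $P^3\dot x_1^{\,3s-12}$ against the denominator the invariant reduces, up to the choice of branch of the square root, to the claimed constant $S_4=\tfrac{s-1}{\sqrt{\epsilon(s+1)(s-3)}}$. Thus each admissible $s$ yields a genuine $10$-dimensional transitive structure, and $\der S_4=0$ is automatic.

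Finally I would establish exhaustiveness: since the equivalence class of such a structure is pinned down by $(\epsilon,S_4)$, it suffices to verify that the map $s\mapsto\big(\epsilon(s),S_4(s)\big)$, for $s\in\bbR\setminus\{-1,0,1,2,3\}$, sweeps out every constant value of $S_4$ that can occur; this is an elementary analysis of the rational function $S_4(s)$ on the intervals $s<-1$ and $s>3$ (where $\epsilon=+1$) and $-1<s<3$ (where $\epsilon=-1$), which simultaneously records the redundancy of the parametrization and confirms that the distinct structures form a genuine one-parameter family. The main obstacle I anticipate is not any single computation but the rigidity input: making precise that Monge symmetries are in bijection with automorphisms of the reduced $10$-dimensional $\{e\}$-structure, and that constancy of the lone invariant $S_4$ propagates (through (\ref{ss5}) and the Bianchi identities implicit in (\ref{ssc})) to the whole collection of structure functions. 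It is exactly this step that collapses the classification to the one-variable range analysis of $S_4(s)$.
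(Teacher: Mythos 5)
Your first two stages coincide with the paper's own argument: the branch $I_1=0$ is excluded by the preceding 11-dimensional result; the branch $I_1\neq 0$, $I_2\neq 0$ is excluded because normalizing $S_1$ (possible precisely since it depends on the residual fibre parameter $b$) reduces the bundle to nine dimensions; and in the branch $I_1\neq 0$, $I_2=0$ transitivity forces $S_4=\mathrm{const}$, whereupon (\ref{ss5}) gives $S_5=-(\epsilon+3S_4^2)$ and (\ref{ssc}) becomes the differentially closed Maurer--Cartan system (\ref{sca}) of a ten-dimensional group acting simply transitively on $Q$. Your computation for $h=\dot x_1^{\,s}$ via Proposition \ref{I1I2} also checks out, including the collapse of $15(s-3)^2-18(s-3)(s-4)+4(s-4)(s-5)$ to $s^2-1$.

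The genuine defect is your closing ``exhaustiveness'' stage. First, it attempts to prove more than the theorem asserts: the statement claims only that the power laws form \emph{a} class of realizations, not that they exhaust the ten-dimensional transitive structures. Second, and decisively, the verification you propose would fail. Writing $t=(s-1)^2$, one has $(s+1)(s-3)=t-4$, hence $S_4^2=\frac{t}{\epsilon(t-4)}$; for $\epsilon=+1$ (where $t>4$) this equals $1+\frac{4}{t-4}>1$, so the family $h=\dot x_1^{\,s}$ realizes only pairs $(+1,S_4)$ with $|S_4|>1$, and no power law gives $(+1,S_4)$ with $S_4\in[-1,1]$ --- even though every such pair does define a homogeneous model abstractly, since (\ref{sca}) is differentially closed for arbitrary constants $(\epsilon,S_4)$. (For $\epsilon=-1$ there is a further small omission: $S_4(s)$ is strictly increasing on $(-1,3)$, so excluding $s=0,1,2$, which is forced by $I_1\neq0$, removes exactly the three values $S_4\in\{-1/\sqrt{3},\,0,\,1/\sqrt{3}\}$ from the image.) The paper is explicit on this point: it records that pairs $(1,S_4)$ with $S_4\in[-1,1]$ could not be realized in terms of Monge ODEs and leaves their realization open. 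So your map $s\mapsto\big(\epsilon(s),S_4(s)\big)$ is not surjective onto the set of admissible invariants, and the claim that the range analysis ``confirms'' exhaustiveness is false; you should either delete that stage or restate it as a determination of exactly which pairs $(\epsilon,S_4)$ the power-law ODEs do realize.
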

\begin{proof}
  If the Monge structure has $T\neq 0$ and $T(b_6)=0$ has quintic root, then the corresponding EDS reduces to 11 dimensions. So the symmetry of such a structure has maximal symmetry dimension to 11. This happens precisely if $I_1=0$. If $I_1\neq 0$ then the EDS reduces to 10-dimensional EDS (\ref{ss2})-(\ref{ss3}). The maximum symmetry of such systems, which may have maximum dimension 10, can occur only if further reduction is not possible. This happens if and only if $I_2=0$, as if $I_2\neq 0$ we may normalize $-S_1=-\frac{2I_2u^2}{bI_1}$ to $-1$, with $b=\frac{2I_2u^2}{I_1}$, and reduce to \emph{nine} dimensions, where the dimension of the symmetry group of the Monge system can not be greater than \emph{nine}. Thus the Monge structures with $T(b_6)=0$ having quintic root with the symmetry group of dimension ten, may only be in the case $I_1\neq =0$ and $I_2=0$. For the transitivity of symmetries we need $S_4=\mathrm{const}$, hence $\der S_4=0$. Looking at the differential of $S_4$ given by (\ref{ss5}), we see that this imples that
  $$S_5=-(\epsilon+3S_4^2).$$
  This brings the Monge EDS (\ref{ssc}) into:
\be
\begin{aligned}
\der\theta^1=&2\Om_1\dz\theta^1+\Theta\dz\theta^2+S_4\theta^1\dz\theta^5+S_4\theta^1\dz\theta^7-S_4\theta^2\dz\theta^4-S_4\theta^2\dz\theta^6-\\&\theta^4\dz\theta^7-\theta^5\dz\theta^6,\\
\der\theta^2=&2\Om_1\dz\theta^2-2\theta^5\dz\theta^7,\\
\der\theta^3=&2\Om_1\dz\theta^3+2\Theta\dz\theta^1-2S_4\theta^1\dz\theta^4-2S_4\theta^1\dz\theta^6+\theta^2\dz\theta^5+\theta^2\dz\theta^7+2S_4\theta^3\dz\theta^5+\\&2S_4\theta^3\dz\theta^7-2\theta^4\dz\theta^6,\\
\der\theta^4=&2\Om_1\dz\theta^4+\Theta\dz\theta^5+\epsilon\theta^1\dz\theta^5+\epsilon \theta^1\dz\theta^7-S_4\theta^4\dz\theta^7+\theta^4\dz\theta^8-\\&S_4\theta^5\dz\theta^6+\theta^6\dz\theta^8,\\
\der\theta^5=&2\Om_1\dz\theta^5+\epsilon\theta^2\dz\theta^5+\epsilon \theta^2\dz\theta^7-2S_4\theta^5\dz\theta^7+\theta^5\dz\theta^8+\theta^7\dz\theta^8,\\
\der\theta^6=&-2\Om_1\dz\theta^4+\Theta\dz\theta^7-\epsilon \theta^1\dz\theta^5-\epsilon \theta^1\dz\theta^7+4S_4\theta^4\dz\theta^5+5S_4\theta^4\dz\theta^7-\theta^4\dz\theta^8-\\&3S_4\theta^5\dz\theta^6
+4S_4\theta^6\dz\theta^7-\theta^6\dz\theta^8,\\
\der\theta^7=&-2\Om_1\dz\theta^5-\epsilon\theta^2\dz\theta^5-\epsilon\theta^2\dz\theta^7+2S_4\theta^5\dz\theta^7-\theta^5\dz\theta^8-\theta^7\dz\theta^8,\\
\der\theta^8=&2\Om_1\dz\theta^8+2\epsilon \theta^5\dz\theta^7+4S_4\theta^5\dz\theta^8+4S_4\theta^7\dz\theta^8,\\
\der\Om_1=&0,\\
\der\Theta=&-(\epsilon+3S_4^2)(\theta^4\dz\theta^5+\theta^4\dz\theta^7-\theta^5\dz\theta^6+\theta^6\dz\theta^7),\\
\der S_4=&0,\\
\epsilon=\pm&1.\end{aligned}
\label{sca}\ee
One can check that this system is \emph{differentially closed}, i.e. that applying the exterioror differential $\mathrm{d}$ on both sides of the above equations does \emph{not} bring any compatibility conditions.

Thus every Monge system satisfying these equations has 10-dimensional group of symmetries. The group manifold is just $Q$ with forms $(\theta^1,\theta^2,\dots,\theta^8,\Om_1,\Theta)$ being a basis of the Maurer Cartan forms. The structure constants of the Lie algebra of the symmetry group in this basis are easilly read of from (\ref{sca}).

To construct Monge ODEs corresponding to Monge systems (\ref{sca}) with a pair of numbers $(\epsilon,S_4)\in \bbZ_2\times\bbR$ being invariants we do as follows.

Take a Monge structure corresponding to the Monge ODEs given in Proposition \ref{I1I2} with function
$$h(\dot{x}_1)=\dot{x}_1^s,\quad\quad s=\mathrm{const}\in\bbR.$$
Then, according to this Proposition (formula (\ref{dd1})), this Monge structure has $I_1\neq 0$ iff
$s^2(s-1)^2(s-2)^2(s-3)(s+1)\neq 0$ and it has $I_2=0$. Moreover, aplying formula (\ref{dd}) to the function $h=\dot{x}_1^s$ we see that this Monge structure has
$$S_4=\frac{s-1}{\sqrt{\epsilon(s+1)(s-3)}},\quad \epsilon=\mathrm{sign}\big((s+1)(s-3)\big).$$
Thus $S_4$ is a \emph{constant} and the corresponding EDS describes a Monge system with 10-symmetries as in (\ref{sca}).

If $\epsilon=-1$, then $s\in]-1,3[$. A quick look at a function $S_4(s)=\frac{s-1}{\sqrt{-(s+1)(s-3)}}$ shows that the image of the interval $]-1,3[$ is $\bbR$, $S_4(]-1,3[)=\bbR$. Thus all pairs $(-1,S_4)\in Z_2\times\bbR$ are invariants of a Monge system given in Proposistion \ref{I1I2} with $h=\dot{x}_1^s$, $s\in]-1,3[$. Likewise one can convince himself that the pairs $(1,S_4)$ with $S_4\in]-\infty,-1[\cup]1,+\infty[$ can be realized as invariants of Monge systems corresponding to the Monge ODEs given in Proposition \ref{I1I2} having $h=\dot{x}_1^s$, $s\in]-\infty,-1[\cup]3,+\infty[$.

We failed to find explicit realization, in terms of Monge ODEs, of pairs $(1,S_4)$ with $S_4\in[-1,1]$ as invariants. It would be nice to have them. 
\end{proof}  
\subsection{Covariantly constant torsion and curvature of $\mathfrak{h}_0$-connection} Given a Monge structure with $I_1\neq 0$ and $I_2=0$ we have an $\mathfrak{h}_0$-valued connection $(\Gamma^i{}_j)$ as given in Theorem \ref{dzie}. We also have its torsion $(T^i)$ and curvature $(K^i{}_j)$. These define respective torsion coefficients $T^i{}_{jk}$ and curvature coefficients $K^i{}_{jkl}$ given by:
$$T^i=\tfrac12 T^i{}_{jk}\theta^j\dz\theta^k,\quad\quad K^i{}_j=K^i{}_{jkl}\theta^k\dz\theta^l.$$
We say that the Monge structure with $I_1\neq 0$, $I_2=0$ has \emph{covariantly constant torsion} iff
$$DT^i{}_{jk}=\der T^i{}_{jk}+\Gamma^i{}_lT^l{}_{jk}-\Gamma^l{}_jT^i{}_{lk}-\Gamma^l{}_kT^i{}_{jl}=0.$$
Likewise, we say that it has \emph{covariantly constant curvature} iff 
$$DK^i{}_{jkl}=\der K^i{}_{jkl}+\Gamma^i{}_mK^m{}_{jkl}-\Gamma^m{}_jK^i{}_{mkl}-\Gamma^m{}_kK^i{}_{jml}-\Gamma^m{}_lK^i{}_{jkm}=0.$$
We have the following proposition.
\begin{proposition}
  Every Monge structure with $I_1\neq 0$ and $I_2=0$ which has covariantly constant torsion has $\der S_4=0$, and thus has 10-dimensional group of local symmetries.

  Every Monge structure with $I_1\neq 0$ and $I_2=0$ which has covariantly constant curvature has $\der S_5=0$. The corresponding EDS for Monge structure with covariantly constant curvature is (\ref{ssc}) with $\der S_5=0$ and is differentially closed.

  Structures with covariantly constant torsion have covariantly constant curvature, but not the other way arround.
\end{proposition}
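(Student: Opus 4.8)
The plan is to reduce both covariant-constancy conditions to scalar equations for the invariants $S_4$ and $S_5$. On the reduced bundle $H_0\to Q\to M$ of Theorem~\ref{dzie} the connection $(\Gamma^i{}_j)$ is a constant-coefficient combination of the two forms $\Om_1$ and $\Theta$ alone, while reading off the arrays $T^i{}_{jk}$ and $K^i{}_{jkl}$ from the explicit $(T^i)$ and $(K^i{}_j)$ shows that every entry of $T^i{}_{jk}$ is a numerical constant, $\pm\epsilon$, or a constant multiple of $S_4$, and every entry of $K^i{}_{jkl}$ is a constant multiple of $S_5$. Both $S_4$ and $S_5$ are fibre-constant, i.e. their differentials carry no $\Om_1,\Theta$ component: for $S_4$ this is (\ref{ss5}), and for $S_5$ I would establish it at the outset by applying $\der$ to (\ref{ss5}) and using $\der^2S_4=0$ together with $\der(\theta^5+\theta^7)=0$ — the latter immediate since the right-hand sides of the $\der\theta^5$ and $\der\theta^7$ equations in (\ref{ssc}) are exact negatives of one another. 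This forces $\der(\epsilon+3S_4^2+S_5)\dz(\theta^5+\theta^7)=0$ and hence $\der S_5\propto\theta^5+\theta^7$.

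Because $DT^i{}_{jk}$ and $DK^i{}_{jkl}$ are the covariant differentials of tensorial objects they are \emph{horizontal} $1$-forms, so their $\Om_1,\Theta$-components vanish; since $\der T^i{}_{jk}$ and $\der K^i{}_{jkl}$ are already horizontal (as $S_4,S_5$ are fibre-constant), the connection corrections must cancel and one is left with $DT^i{}_{jk}=(\partial_{S_4}T^i{}_{jk})\,\der S_4$ and $DK^i{}_{jkl}=(\partial_{S_5}K^i{}_{jkl})\,\der S_5$. By inspection some $\partial_{S_4}T^i{}_{jk}$ is nonzero (e.g. from the entry $S_4(\theta^1\dz\theta^5+\dots)$), and the numerical matrix multiplying $S_5$ in $(K^i{}_j)$ is nonzero, so covariantly constant torsion is equivalent to $\der S_4=0$ and covariantly constant curvature to $\der S_5=0$. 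Since $\der S_4=(\epsilon+3S_4^2+S_5)(\theta^5+\theta^7)$ with $\theta^5+\theta^7\neq0$, the condition $\der S_4=0$ is exactly the hypothesis $S_4=\mathrm{const}$ of Theorem~\ref{10}, which yields the ten-dimensional symmetry group. For the curvature claim the augmented system (\ref{ssc}) with $\der S_5=0$ is differentially closed: the only nontrivial checks $\der^2\Theta=0$ and $\der^2S_4=0$ follow from $\der(\theta^5+\theta^7)=0$ and $\der(\epsilon+3S_4^2+S_5)\propto\theta^5+\theta^7$.

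For the implication between the two notions, covariantly constant torsion gives $\epsilon+3S_4^2+S_5=0$ with $S_4$ constant, whence $S_5=-\epsilon-3S_4^2$ is constant and the curvature is covariantly constant too. The converse fails: fixing any constant $S_5$ with $\epsilon+3S_4^2+S_5\not\equiv0$, the equation $\der S_4=(\epsilon+3S_4^2+S_5)(\theta^5+\theta^7)$ admits non-constant solutions, so such a structure has covariantly constant curvature but non-parallel torsion; existence is guaranteed by the closure just noted and can be exhibited concretely within the family $z_{22}=\dot x_2^2+h(\dot x_1)$ of Proposition~\ref{I1I2} by selecting $h$ with $S_5$ constant but $S_4$ varying. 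The step I expect to be the main obstacle is the clean justification that the $\Om_1,\Theta$-corrections in $DT^i{}_{jk}$ and $DK^i{}_{jkl}$ cancel so that covariant differentiation collapses to ordinary differentiation of $S_4$ and $S_5$ — this rests on the equivariance of torsion and curvature and on fibre-constancy — together with producing an explicit $h$ that realizes the strict counterexample.
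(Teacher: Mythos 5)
Your proposal is correct and reaches all three conclusions of the proposition, but it replaces the paper's key technical step by a more conceptual argument. The paper proceeds by brute force: a direct component calculation shows that $DT^i{}_{jk}=0$ is equivalent to the single algebraic condition $\epsilon+3S_4^2+S_5=0$, which combined with (\ref{ss5}) gives $\der S_4=0$; similarly, after the Bianchi-identity step producing $\der S_5=Z(\theta^5+\theta^7)$, a direct calculation shows $DK^i{}_{jkl}=0$ if and only if $Z=0$. You instead observe that the connection $(\Gamma^i{}_j)$ is built purely from the vertical coframe elements $\Om_1,\Theta$, that the torsion and curvature coefficients depend only on $S_4$ and $S_5$, and that these invariants are fibre-constant (for $S_4$ by (\ref{ss5}); for $S_5$ by your correct observation that $\der(\theta^5+\theta^7)=0$ in (\ref{ssc}), which reproduces the paper's Bianchi step). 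Then tensoriality forces $DT^i{}_{jk}$ and $DK^i{}_{jkl}$ to be horizontal while the connection corrections are vertical, so the corrections must vanish identically and covariant differentiation collapses to $\partial_{S_4}T^i{}_{jk}\,\der S_4$ and $\partial_{S_5}K^i{}_{jkl}\,\der S_5$; since some coefficients (e.g.\ $T^1{}_{15}=S_4$, and the nonzero constant matrix multiplying $S_5$) have nonvanishing derivative, the two covariant-constancy conditions become exactly $\der S_4=0$ and $\der S_5=0$. This argument is valid — the needed equivariance holds because Theorem \ref{dzie} exhibits $(\Gamma^i{}_j)$ as a genuine principal $\mathfrak{h}_0$-connection — and one can confirm the cancellation on components (e.g.\ in $DT^1{}_{15}$ the terms $-2S_4\Om_1$ cancel). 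What your route buys is the avoidance of the component computation and a transparent reason why only the scalar derivatives $\der S_4,\der S_5$ matter; what the paper's direct calculation buys is independence from the principal-bundle formalism and the explicit algebraic form $\epsilon+3S_4^2+S_5=0$ of the torsion condition (note the paper's printed $S_5=-(\epsilon+S_4^2)$ is a typo for $-(\epsilon+3S_4^2)$, which you have correctly). For the final claim both you and the paper rest the failure of the converse on the fact that (\ref{ssc}) augmented by $\der S_5=0$ is differentially closed without imposing $\der S_4=0$; your suggested explicit example with $h$ chosen so that $S_5$ is constant but $S_4$ varies is an optional extra that neither you nor the paper actually carries out, and it is not needed once closure is established.
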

%/home/pawel/MEGA/MEGAsync/notebooks/utah/monge_equations/2016_cartan_connection_minimal_torsion_new_paper_last_normalization_all_reparam_redukcja_do_10_1_abstract.nb
  \begin{proof}
    By a direct calculation one shows that the condition $DT^i{}_{jk}=0$ applied to the torsion $(T^i)$ from Theorem \ref{dzie} gives is equivalent to \be\epsilon+3S_4^2+S_5=0\label{bui}\ee This, when compared with (\ref{ss5}), gives $\der S_4=0$. Because of (\ref{bui}) we have $S_5=-(\epsilon+S_4^2)$, so in such a case  we end up with the EDS (\ref{sca}) having 10 symmetries and decsribed by Theorem \ref{10}.
    
    Likewise, Bianchi identities for the EDS (\ref{ssc}), applied to $\der S_4$ from (\ref{ss5}) show that $\der S_5\dz(\theta^5+\theta^7)=0$. This means that $\der S_5=Z(\theta^5+\theta^7)$ with $Z$ a function on $Q$. Now the left hand side of the condition $DK^i{}_{jkl}=0$ can be calculated directly for the curvature $(K^i{}_j)$ from Theorem \ref{dzie}. It shows that $DK^i{}_{jkl}=0$ if and only $Z=0$. Thus we have $\der S_5=0$. One can see by a direct calculation that condition $\der S_5=0$ differentially closes the EDS (\ref{ssc}) without stronger assumption that $\der S_4=0$. 
    \end{proof}
  \subsection{ Reduction to nine dimensions: case $I_1\neq 0$, $I_2\neq 0$}
  We now return to the EDS (\ref{ss2})-(\ref{ss3}) corresponding to generic case of Monge structures with $I_1\neq 0$, namely those that have $I_2\neq 0$. If $I_2\neq 0$ formula (\ref{ss17}) enables for the next normalization
  \be S_1=-1\quad\Longleftrightarrow\quad b=-\frac{2I_2u^2}{I_1}.\label{norb}\ee
  This reduces the system (\ref{ss2})-(\ref{ss3}) to \emph{nine} dimensions. In particular the form $\Om_1$ becomes dependent on ($\theta^1,\theta^2,\dots,\theta^8,\Theta)$ and reads:
  $$\Om_1=-\tfrac43\epsilon (1+\frac{R}{I_2^2})\theta^2+\tfrac14 S_2(\theta^5+\theta^7).$$
  A convenient coordinate system on $\mathcal U$ is given by coordinates $(y_1,y_2,\dots,y_8)$ as given in Theorem \ref{exa} on the base $M$, and a coordinate $c$ - the remaining of the three group $H$ parameters $(a,b,c)$ after imposition of the normalizations $a=\frac{(\epsilon I_1)^{1/4}}{2u}$, $b=-\frac{2I_2u^2}{I_1}$.

We have the following proposition.
\begin{proposition}\label{411}
    Every Monge structure with $I_1\neq 0$ and $I_2\neq 0$ uniquley defines a 9-dimensional subset $\mathcal U$ of $Q$, which is locally fibred ${\mathcal U}\to M$ over the 8dimensional manifold $M$ on which the Monge structure is defined. The EDS corresponding to these Monge structures is given in terms of a unique coframe $(\theta^1,\theta^2,\dots,\theta^8,\Theta)$ and reads: 
 \be
\begin{aligned}
\der\theta^1=&\Theta\dz\theta^2+S_6\theta^1\dz\theta^2+\tfrac12(S_2+2S_4)\theta^1\dz\theta^5+\\&\tfrac12(2S_4-S_2)\theta^1\dz\theta^7+\theta^1\dz\theta^8-(S_2+S_4)\theta^2\dz\theta^4-S_4\theta^2\dz\theta^6-\\&\theta^4\dz\theta^7-\theta^5\dz\theta^6,\\
\der\theta^2=&-\tfrac12S_2\theta^2\dz\theta^5-\tfrac12 S_2\theta^2\dz\theta^7-2\theta^5\dz\theta^7,\\
\der\theta^3=&2\Theta\dz\theta^1-2(S_2+S_4)\theta^1\dz\theta^4-2S_4\theta^1\dz\theta^6-\\&S_6\theta^2\dz\theta^3+\theta^2\dz\theta^5+\theta^2\dz\theta^7+\tfrac12(3S_2+4S_4)\theta^3\dz\theta^5+\\&\tfrac12(4S_4-S_2)\theta^3\dz\theta^7+2\theta^3\dz\theta^8-2\theta^4\dz\theta^6,\\
\der\theta^4=&\Theta\dz\theta^5-\epsilon S_2\theta^1\dz\theta^2-(\tfrac13\epsilon+\tfrac12S_3)\theta^1\dz\theta^5+\epsilon \theta^1\dz\theta^7-\\&\tfrac12(3S_3+2S_6)\theta^2\dz\theta^4-\tfrac12S_2\theta^4\dz\theta^5-\tfrac12(S_2+2S_4)\theta^4\dz\theta^7-\\&S_4\theta^5\dz\theta^6+\theta^6\dz\theta^8,\\
\der\theta^5=&-(\tfrac13\epsilon+2S_3+S_6) \theta^2\dz\theta^5+\epsilon \theta^2\dz\theta^7-\tfrac12(S_2+4S_4)\theta^5\dz\theta^7-\\&\theta^5\dz\theta^8+\theta^7\dz\theta^8,\\
\der\theta^6=&\Theta\dz\theta^7+\epsilon S_2\theta^1\dz\theta^2+3\epsilon \theta^1\dz\theta^5+(\tfrac53\epsilon-\tfrac12S_3) \theta^1\dz\theta^7+\\&(3S_3+S_6)\theta^2\dz\theta^4+\tfrac32S_3\theta^2\dz\theta^6+\tfrac12(9S_2+8S_4)\theta^4\dz\theta^5+\\&\tfrac12(3S_2+10S_4)\theta^4\dz\theta^7+3\theta^4\dz\theta^8-3(S_2+S_4)\theta^5\dz\theta^6
+\\&4S_4\theta^6\dz\theta^7+2\theta^6\dz\theta^8,\\
\der\theta^7=&(3\epsilon+3S_3+S_6)\theta^2\dz\theta^5+(\tfrac53\epsilon+S_3) \theta^2\dz\theta^7+\\&\tfrac12(4S_4-3S_2)\theta^5\dz\theta^7+3\theta^5\dz\theta^8+\theta^7\dz\theta^8,\\
\der\theta^8=&2\epsilon S_2\theta^2\dz\theta^5-(3S_3+S_6)\theta^2\dz\theta^8+2\epsilon \theta^5\dz\theta^7+\\&\tfrac12(9S_2+8S_4)\theta^5\dz\theta^8+\tfrac12(S_2+8S_4)\theta^7\dz\theta^8,\\
\der\Theta=&\Theta\dz\theta^8+S_7\theta^1\dz\theta^2+(2\epsilon S_2-S_{10})\theta^1\dz\theta^5-S_8\theta^1\dz\theta^7-\\&(3S_3+S_6)\theta^1\dz\theta^8+S_{10}\theta^2\dz\theta^4+S_8\theta^2\dz\theta^6+\\&(6\epsilon +3S_2^2-9S_3+6S_2 S_4+S_9-4S_6)\theta^4\dz\theta^5+
\end{aligned}\label{ss18}\ee
$$\begin{aligned}
&(\tfrac43\epsilon -\tfrac52S_3+3S_2S_4+S_9)\theta^4\dz\theta^7+\tfrac12(9S_2+8S_4)\theta^4\dz\theta^8+\\&(\tfrac23\epsilon+\tfrac52S_3-3S_2S_4-S_9)\theta^5\dz\theta^6+S_9\theta^6\dz\theta^7+\tfrac12(S_2+8S_4)\theta^6\dz\theta^8.
\end{aligned}$$
Here the functions $S_2,S_3,S_4$ are functions from (\ref{fsa}) with $b$ as in (\ref{norb}), and $S_6,S_7,\dots,S_{10}$ are some other functions on $\mathcal U$. All the functions $S_i$, $i=2,3,4,6,7,8,9,10$, do not depend on the fiber coordinate $c$, $\frac{\partial S_i}{\partial c}=0$. In particular,
$$S_6=\frac{8R+8I_2^2-3\epsilon S_3 I_2^2}{3\epsilon I_2^2},$$
and we also have
$$\begin{aligned}
  \der S_2=&2S_2\theta^8+\tfrac12(3S_2S_3+2S_8+2S_2S_6-2S_{10})\theta^2+\\&(4\epsilon+\tfrac32 S_2^2-6S_3+2S_2S_4-2S_6)\theta^5+(2S_2S_4-\tfrac12 S_2^2-2S_3-\tfrac43\epsilon)\theta^7,\\
  \der S_3=&2(3S_3+S_6)\theta^8+(\tfrac{128}{9}+2\epsilon S_2^2-\tfrac{48}{9}\epsilon S_3+\tfrac12 S_3^2-2S_7-\tfrac{48}{9}\epsilon S_6+S_3 S_6)\theta^2+\\&(2S_{10}-\tfrac23\epsilon S_2+\tfrac12 S_2S_3-\tfrac{16}{3}\epsilon S_4+S_3S_4)\theta^5+\\&(\tfrac23\epsilon S_2-\tfrac12S_2S_3-\tfrac{16}{3}\epsilon S_4+S_3S_4+2S_8)\theta^7,\\
  \der S_4=&\tfrac32(S_2+4S_4)\theta^8-(\epsilon S_2+\tfrac32 S_3S_4+S_8)\theta^2+\\&(S_9+3S_4^2+6S_2S_4-3S_3-\epsilon)\theta^5+(\epsilon+3S_4^2+S_9)\theta^7.
\end{aligned}$$
  \end{proposition}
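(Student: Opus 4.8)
The plan is to obtain the nine-dimensional EDS (\ref{ss18}) by one further canonical reduction of the ten-dimensional system (\ref{ss2})--(\ref{ss3}) governing the $I_1\neq 0$ branch on $Q$. The starting point, already recorded in (\ref{ss17}), is that the coefficient $-S_1=-\tfrac{2I_2u^2}{bI_1}$ that multiplies both $\Theta\dz\theta^8$ in $\der\Theta$ and $\theta^1\dz\theta^8$ in $\der\theta^1$ is a nonzero function of the residual group parameter $b$ exactly when $I_2\neq 0$. I would therefore first normalize $S_1=-1$, i.e.\ impose (\ref{norb}), $b=-\tfrac{2I_2u^2}{I_1}$. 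Since $u\neq 0$ and $I_2\neq 0$ this respects the open condition $ab\neq 0$ defining $H$, and it is a genuine reduction: it expresses $b$ as a function pulled back from $M$, so the fiber of $Q\to M$ drops from dimension two to dimension one. The single surviving fiber coordinate is $c$ (the parameter formerly called $b_5$), and $\mathcal U\subset Q$ is the resulting nine-dimensional submanifold, fibered over $M$.

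Next I would eliminate $\Om_1$ from the coframe. On $Q$ the parameter $a$ is already frozen by the choice $t^4{}_{17}=1$, so $\Om_1$ carries only the vertical piece $\tfrac12\,\der b/b$ modulo horizontal terms. Once $b$ is fixed by (\ref{norb}), the identity $\der\log|b|=\tfrac{\der I_2}{I_2}+2\tfrac{\der u}{u}-\tfrac{\der I_1}{I_1}$ lets me replace $\der b$ using the closed differentials of $u,v_1,v_2,I_1,I_2,R$ supplied by (\ref{sysc3u1}) and the definitions (\ref{in1}). Collecting terms should reduce $\Om_1$ to the purely horizontal form $\Om_1=-\tfrac43\epsilon\bigl(1+\tfrac{R}{I_2^2}\bigr)\theta^2+\tfrac14 S_2(\theta^5+\theta^7)$ announced in the proof sketch. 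Substituting this $\Om_1$ and the normalization (\ref{norb}) through all of (\ref{ss2})--(\ref{ss3}) then yields (\ref{ss18}); in particular $-S_1\Theta\dz\theta^8$ becomes $+\Theta\dz\theta^8$, while the new structure functions $S_6,\dots,S_{10}$ are \emph{defined} as the coefficients appearing in the transformed equations, with $S_6=\tfrac{8R+8I_2^2-3\epsilon S_3I_2^2}{3\epsilon I_2^2}$ read off from the reduced $\der\theta^5$ and $\der\theta^7$ relations.

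I would then settle the two remaining assertions. For the $c$-independence $\partial S_i/\partial c=0$: since $\Theta=a\,\der c+(\text{horizontal})$ on $\mathcal U$, with $a$ fixed on $Q$, it suffices to check that $\der S_i$ carries no $\Theta$ component. For $S_2,S_3,S_4$ this is immediate from their differentials, which lie in $\Span(\theta^2,\theta^5,\theta^7,\theta^8)$; for $S_6,\dots,S_{10}$ the same follows by differentiating their rational expressions in the $M$-functions $u,v_i,w_i,z_i,I_1,I_2,R$ and using (\ref{sysc3u1}), (\ref{in1}). The displayed formulas for $\der S_2,\der S_3,\der S_4$ are obtained by imposing the Bianchi identities $\der^2\theta^i=\der^2\Theta=0$ on (\ref{ss18}), which forces these exterior derivatives and pins down their coefficients. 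Uniqueness of the coframe $(\theta^1,\dots,\theta^8,\Theta)$ follows because every normalization used --- the choice of $Q$ via $t^4{}_{17}=1$ and then (\ref{norb}) --- is dictated canonically by the nonvanishing invariants $I_1$ and $I_2$, leaving no freedom beyond the fiber action in $c$.

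The main obstacle will be purely computational: propagating the substitution $b=-\tfrac{2I_2u^2}{I_1}$ and the horizontalized $\Om_1$ through the full system (\ref{ss2})--(\ref{ss3}), correctly isolating the $c$-independent functions $S_6,\dots,S_{10}$, and then verifying closure via $\der^2=0$. No conceptual difficulty arises --- the admissibility of the normalization and the induced fibration $\mathcal U\to M$ are clear --- but the bookkeeping is heavy, which is presumably why the lower-order terms $s_{ij}$ in (\ref{ss3}) were suppressed and the functions $S_i$ are characterized implicitly rather than written out in closed form.
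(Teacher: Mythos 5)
Your proposal follows essentially the same route as the paper: the paper also reduces from $Q$ to $\mathcal U$ by imposing the normalization $S_1=-1$, i.e.\ (\ref{norb}), notes that $\Om_1$ then becomes the horizontal form $-\tfrac43\epsilon\bigl(1+\tfrac{R}{I_2^2}\bigr)\theta^2+\tfrac14 S_2(\theta^5+\theta^7)$, and obtains (\ref{ss18}) by substituting through (\ref{ss2})--(\ref{ss3}) --- its proof is literally ``by a straightforward algebra applied to the EDS (\ref{ss2})--(\ref{ss3})''. Your additional remarks (the group condition $ab\neq 0$ being respected, $\Theta=a\,\der c+\mathrm{horizontal}$ so that $c$-independence amounts to the absence of a $\Theta$-component in $\der S_i$, and the Bianchi/direct computation of $\der S_2,\der S_3,\der S_4$) are exactly the details the paper leaves implicit.
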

\begin{proof}By a straightforward algebra applied to the EDS (\ref{ss2})-(\ref{ss3}). \end{proof}

%/home/pawel/MEGA/MEGAsync/notebooks/utah/monge_equations/2016_cartan_connection_minimal_torsion_new_paper_last_normalization_all_reparam_redukcja_do_10_1_new_dalej.nb
Now let us consider two vector subspaces $\mathfrak{t}_\epsilon$, $\epsilon=\pm1$, in $\spa(3,\bbR)$, which are defined by the following linear relations
\be\begin{aligned}
  b^1{}_1=&\tfrac{32}{9}a^1{}_1,\quad b^1{}_2=0,\quad b^2{}_2=0,\\
  c^1{}_1=&-\tfrac23\epsilon a^1{}_1,\quad c^2{}_1=0,\quad c^2{}_2=-c-\epsilon a^1{}_1,\\
  q^1=&\tfrac13\epsilon(3p^1+u^1),\quad q^2=0,\quad q=3c,\\
  p=&c,\\
  v^1=&\epsilon(3 u^1-\frac53 p^1),\quad v^2=0,
\end{aligned}\label{paar}\ee
between the parameters of algebra $\spa(3,\bbR)$ as given in (\ref{sp3r}). It follows that each of the vector subspaces $\mathfrak{t}_\epsilon\subset\spa(3,\bbR)$ is a Lie subalgebra of $\spa(3,\bbR)$. 

It is now convenient to introduce the following 1-form $\varpi$ on $\mathcal U$ with values in $\mathfrak{t}_\epsilon$:
\be\varpi=\bma
-\tfrac23\epsilon\theta^2&\Theta+\tfrac13\epsilon\theta^1&\tfrac13\epsilon(\theta^5-3\theta^7)&\tfrac{32}{9}\theta^2&0&\tfrac13\epsilon(9\theta^5+5\theta^7\\
0&-\epsilon\theta^2-\theta^8&0&0&0&0\\
\theta^7&\theta^6&\theta^8&\tfrac13\epsilon(9\theta^5+5\theta^7)&0&3\theta^8\\
\theta^2&\theta^1&\theta^5&\tfrac23\epsilon\theta^2&0&-\theta^7\\
\theta^1&\theta^3-\tfrac14\epsilon(\theta^5+\theta^7)&\theta^4&-\Theta-\tfrac13\epsilon\theta^1&\epsilon \theta^2+\theta^8&-\theta^6\\
\theta^5&\theta^4&\theta^8&\tfrac13\epsilon(3\theta^7-\theta^5)&0&-\theta^8
\ema.\label{cc9}\ee
To see that this has values in $\mathfrak{t}_\epsilon$ it is enough to take 
$$\begin{aligned}
  a^1{}_2=&\theta^1,\quad a^1{}_1=\theta^2,\quad a^2{}_2=\theta^3-\tfrac14\epsilon(\theta^5+\theta^7),\\
  c^1{}_2=&\Theta+\tfrac13\epsilon\theta^1,\quad c=\theta^8,\\
  p^1=&-\theta^7,\quad p^2=-\theta^6,\\
  u^1=&\theta^5,\quad u^2=\theta^4,
\end{aligned}$$
in (\ref{sp3r}) and to take the rest of the parameters in (\ref{sp3r}) as in (\ref{paar}). 

It then follows that $\varpi$ is a Cartan $\mathfrak{t}_\epsilon$-valued connection on a Cartan bundle ${\mathcal U}\to M$, whose curvature
\be\Omega=\der\varpi+\varpi\dz\varpi,\label{cc10}\ee
is
\be\Omega=\bma
-\tfrac23\epsilon A^1{}_1&C^1{}_2&\tfrac13\epsilon(U^1+3P^1)&\tfrac{32}{9}A^1{}_1&0&\tfrac13\epsilon(9U^1-5P^1)\\
0&-\epsilon A^1{}_1-C&0&0&0&0\\
-P^1&-P^2&C&\tfrac13\epsilon(9U^1-5P^1)&0&3C\\
A^1{}_1&A^1{}_2&U^1&\tfrac23\epsilon A^1{}_1&0&P^1\\
A^1{}_2&A^2{}_2&U^2&-C^1{}_2&\epsilon A^1{}_1+C&P^2\\
U^1&U^2&C&-\tfrac13\epsilon(3P^1+U^1)&0&-C
\ema,\label{cc11}\ee
where:
\be\begin{aligned}
  A^1{}_1=&-\tfrac12 S_2\theta^2\dz(\theta^5+\theta^7),\\
  A^1{}_2=&(S_6-2\epsilon)\theta^1\dz\theta^2+\tfrac12(S_2+2S_4)\theta^1\dz\theta^5+\tfrac12(2S_4-S_2)\theta^1\dz\theta^7-\\&(S_2+S_4)\theta^2\dz\theta^4-S_4\theta^2\dz\theta^6,\\
  A^2{}_2=&-2(S_2+S_4)\theta^1\dz\theta^4-2S_4\theta^1\dz\theta^6+(2\epsilon-S_6)\theta^2\dz\theta^3-\\&(\tfrac16+\tfrac14\epsilon S_3)\theta^2\dz(\theta^5+\theta^7)+\tfrac12(3S_2+4S_4)\theta^3\dz\theta^5+\tfrac12(4S_4-S_2)\theta^3\dz\theta^7+\tfrac12\epsilon S_2\theta^5\dz\theta^7,\\
  U^1=&(\tfrac23\epsilon-2S_3-S_6)\theta^2\dz\theta^5-\tfrac12(S_2+4S_4)\theta^5\dz\theta^7,\\
  U^2=&-\epsilon S_2\theta^1\dz\theta^2-(\tfrac13\epsilon+\tfrac12S_3)\theta^1\dz\theta^5+\tfrac12(2\epsilon-3S_3-2S_6)\theta^2\dz\theta^4-\\&\tfrac12 S_2\theta^4\dz\theta^5-\tfrac12(S_2+2S_4)\theta^4\dz\theta^7-S_4\theta^5\dz\theta^6,\\
  C^1{}_2=&\tfrac13(3S_7+\epsilon S_6-11)\theta^1\dz\theta^2+(\tfrac13\epsilon S_4+\tfrac{13}{6}\epsilon S_2-S_{10})\theta^1\dz\theta^5+(\tfrac13\epsilon S_4-\tfrac16\epsilon S_2-S_8)\theta^1\dz\theta^7-\\&(3S_3+S_6)\theta^1\dz\theta^8+\tfrac13(3S_{10}-\epsilon S_2-\epsilon S_4)\theta^2\dz\theta^4+\tfrac13(3S_8-\epsilon S_4)\theta^2\dz\theta^6+\\&(3\epsilon+3S_2^2-9S_3+6 S_2S_4+S_9-4S_6)\theta^4\dz\theta^5+(S_9+3S_2S_4-\tfrac52S_3-\tfrac23\epsilon)\theta^4\dz\theta^7+\\&\tfrac12(9S_2+8 S_4)\theta^4\dz\theta^8-(S_9+3S_2S_4-\tfrac52S_3-\tfrac23\epsilon)\theta^5\dz\theta^6+(\epsilon+ S_9)\theta^6\dz\theta^7+\\&\tfrac12(S_2+8 S_4)\theta^6\dz\theta^8,\\
  C=&2\epsilon S_2\theta^2\dz\theta^5-(3S_3+S_6)\theta^2\dz\theta^8+\tfrac12(9S_2+8S_4)\theta^5\dz\theta^8+\tfrac12(S_2+8S_4)\theta^7\dz\theta^8,\\
  P^1=&-(3S_3+S_6)\theta^2\dz\theta^5-(\tfrac23\epsilon+S_3)\theta^2\dz\theta^7+\tfrac12(3S_2-4S_4)\theta^5\dz\theta^7,\\
   P^2=&-\epsilon S_2\theta^1\dz\theta^2+(\tfrac13\epsilon+\tfrac12S_3)\theta^1\dz\theta^7-(3S_3+S_6)\theta^2\dz\theta^4-\tfrac12(2\epsilon+3 S_3)\theta^2\dz\theta^6-\\&\tfrac12(9S_2+8S_4)\theta^4\dz\theta^5-\tfrac12(3S_2+10S_4)\theta^4\dz\theta^7+3(S_2+S_4)\theta^5\dz\theta^6-4S_4\theta^6\dz\theta^7.\\
  \end{aligned}\label{cc12}\ee
We summerize the above considerations in the following theorem.
\begin{theorem}
  Every Monge structure with $I_1\neq 0$ and $I_2\neq 0$ locally uniquley defines a 9-dimensional Cartan bundle
  $$\bbR\to {\mathcal U}\to M$$ with Cartan connection $\varpi$ as in (\ref{cc9}) defined in terms of a rigid coframe $(\theta^1,\theta^2,\dots,$ $\theta^8,\Theta)$ satisfying the EDS (\ref{ss18}) from Proposition \ref{411}. The curvature of this connection given explicitly in (\ref{cc10})-(\ref{cc11}) provides all basic local invariants of Monge structures with $I_1\neq 0$ and $I_2\neq 0$.

  The EDS lives on 9-dimensional manifold $\mathcal U$ and no Cartan reduction of this system to dimension 8 is possible.
  \end{theorem}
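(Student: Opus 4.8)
The plan is to build directly on Proposition \ref{411}, which already delivers the $9$-dimensional manifold $\mathcal U$, the rigid coframe $(\theta^1,\dots,\theta^8,\Theta)$, the closed EDS (\ref{ss18}), and --- crucially --- the fact that every structure function $S_i$ is independent of the fibre coordinate $c$, i.e. $\partial S_i/\partial c=0$. What remains is to recast this data as a Cartan geometry and then to argue that the reduction procedure genuinely terminates at dimension $9$.

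First I would verify that the subspaces $\mathfrak t_\epsilon\subset\spa(3,\bbR)$ cut out by the linear relations (\ref{paar}) are honest Lie subalgebras. A parameter count shows that (\ref{paar}) imposes twelve independent linear conditions on the $21$ coordinates of (\ref{sp3r}), so $\dim\mathfrak t_\epsilon=9$; closure under the bracket is then a finite algebraic check, bracketing two generic elements constrained by (\ref{paar}) and confirming that the result again satisfies (\ref{paar}). Next I would introduce the $\mathfrak t_\epsilon$-valued $1$-form $\varpi$ of (\ref{cc9}) through the nine identifications $a^1{}_2=\theta^1$, $a^1{}_1=\theta^2$, $\dots$, $u^2=\theta^4$ recorded after (\ref{cc9}). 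Since these nine entries are exactly the components of the rigid coframe, $\varpi_p\colon T_p\mathcal U\to\mathfrak t_\epsilon$ is a linear isomorphism at each point; together with the facts that $\varpi$ reproduces the fundamental vector field of the $\bbR$-action along the fibre and is $\bbR$-equivariant, this is precisely what is needed for $\varpi$ to be a Cartan connection for the pair $(\mathfrak t_\epsilon,\bbR)$ on the bundle $\bbR\to\mathcal U\to M$.

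The substantive calculation is the curvature. I would compute $\Omega=\der\varpi+\varpi\dz\varpi$ directly from the structure equations (\ref{ss18}), matching each matrix entry against (\ref{cc11})--(\ref{cc12}). The decisive feature to extract is that every component $A^1{}_1,A^1{}_2,\dots,P^2$ is \emph{horizontal}, that is, a combination of the $\theta^i\dz\theta^j$ alone with no $\Theta$ occurring. This simultaneously confirms that $\varpi$ is a Cartan connection (and not merely a coframe) and shows that $\Omega$ descends to furnish the complete system of basic local invariants on $M$, since its coefficients are built only from $S_2,S_3,S_4,S_6,\dots,S_{10}$ and their $\der$-derivatives, all of which are pulled back from $M$.

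The final, and conceptually hardest, point is non-reducibility. Here the argument is that the only group parameter surviving the normalizations $a=\tfrac{(\epsilon I_1)^{1/4}}{2u}$ and $b=-\tfrac{2I_2u^2}{I_1}$ is the fibre coordinate $c$, so any further Cartan reduction to dimension $8$ could only proceed by normalizing an essential torsion or curvature coefficient that varies along the $c$-fibre. But Proposition \ref{411} gives $\partial S_i/\partial c=0$ for all $i$, and by the previous paragraph the curvature coefficients (\ref{cc12}) are assembled solely from these $S_i$; hence no relative invariant depends on $c$, there is nothing to normalize, and the equivalence method has reached its terminal step. Equivalently, the rigidity of $(\theta^1,\dots,\theta^8,\Theta)$ means $(\mathcal U,\varpi)$ is already a bona fide Cartan geometry, so no $8$-dimensional reduction can exist. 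The main obstacle I anticipate is the bookkeeping in the curvature computation: one must confirm that the many $\Theta$-containing terms generated by $\varpi\dz\varpi$ cancel exactly against the $\Theta$-terms produced by $\der\Theta$ in (\ref{ss18}), leaving a purely horizontal $\Omega$ --- it is exactly this cancellation that certifies the Cartan-connection property and underpins both assertions of the theorem.
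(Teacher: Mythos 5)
Your proposal is correct and follows essentially the same route as the paper: the theorem is proved there as a summary of the immediately preceding construction of $\mathfrak{t}_\epsilon$, the connection $\varpi$ of (\ref{cc9}), and its curvature (\ref{cc10})--(\ref{cc12}), with the only point requiring separate justification being non-reducibility to dimension $8$. Your argument for that point --- no essential torsion or curvature coefficient depends on the fibre coordinate $c$, since all the $S_i$ satisfy $\partial S_i/\partial c=0$, so there is nothing left to normalize --- is precisely the paper's own argument.
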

\begin{proof} Since the theorem is a summary of the considerations preceeding it no further justification of its validity is needed, except the statement of impossibility of further Cartan reduction to dimension 8. This is true because \emph{none} of the curvatures functions of the connection $\varpi$ appearing in the curvature $\Omega$ depends on the fiber coordinate $c$. 
  \end{proof}
\subsection{The flat model here and nine dimensional symmetry group}
%/home/pawel/MEGA/MEGAsync/notebooks/utah/monge_equations/2016_cartan_connection_minimal_torsion_new_paper_last_normalization_all_reparam_redukcja_do_10_1_new_dalej_curv_zero.nb
 It follows that the only possibility for Monge structures that have fivefold multiple root of their main invariant $T(b_6)$ to have precisely 9-dimensional transtive symmetry group is when $I_1\neq 0$, $I_2\neq 0$ and when all the curvature functions $S_2,S_3,S_4,S_6...,S_{10}$ in the EDS (\ref{ss18}) are constants. A quick isnpection of the integrability conditions shows that even weaker conditions, namely 
 $$\der S_2=0,\quad \mathrm{and}\quad \der S_4=0,$$
 imply that
 $$\der S_3=\der S_5=\der S_6=\der S_7=\der S_8=\der S_9=\der S_{10}=0.$$
 Actually we have the following theorem:
 \begin{theorem}\label{sy9}
   Every Monge strucure satisfying an ODEs (\ref{ss18}) with $$\der S_2=\der S_4=0$$ has
   $$S_2=S_4=S_8=S_{10}=0,\,\,\mathrm{and}\,\, S_3=-\tfrac23\epsilon,\,\,\mathrm{and}\,\,  S_6=2\epsilon,\,\,\mathrm{and}\,\, S_7=3,\,\,\mathrm{and}\,\, S_9=-\epsilon.$$
   Thus, modulo local equivalence, there are only two such structures, corresponding to $\epsilon=1$ or $\epsilon=-1$, and their inavariant coframes satisfy the following EDS: 
 \be
\begin{aligned}
\der\theta^1=&\Theta\dz\theta^2+2\epsilon \theta^1\dz\theta^2+\theta^1\dz\theta^8-\theta^4\dz\theta^7-\theta^5\dz\theta^6,\\
\der\theta^2=&-2\theta^5\dz\theta^7,\\
\der\theta^3=&2\Theta\dz\theta^1-2\epsilon\theta^2\dz\theta^3+\theta^2\dz\theta^5+\theta^2\dz\theta^7+2\theta^3\dz\theta^8-2\theta^4\dz\theta^6,\\
\der\theta^4=&\Theta\dz\theta^5+\epsilon \theta^1\dz\theta^7-\epsilon\theta^2\dz\theta^4+\theta^6\dz\theta^8,\\
\der\theta^5=&-\epsilon\theta^2\dz\theta^5+\epsilon \theta^2\dz\theta^7-\theta^5\dz\theta^8+\theta^7\dz\theta^8,\\
\der\theta^6=&\Theta\dz\theta^7+3\epsilon \theta^1\dz\theta^5+2\epsilon\theta^1\dz\theta^7-\epsilon\theta^2\dz\theta^6+3\theta^4\dz\theta^8+2\theta^6\dz\theta^8,\\
\der\theta^7=&3\epsilon\theta^2\dz\theta^5+\epsilon\theta^2\dz\theta^7+3\theta^5\dz\theta^8+\theta^7\dz\theta^8,\\
\der\theta^8=&2\epsilon \theta^5\dz\theta^7,\\
\der\Theta=&\Theta\dz\theta^8+3\theta^1\dz\theta^2+3\epsilon\theta^4\dz\theta^5+2\epsilon\theta^4\dz\theta^7-\epsilon\theta^6\dz\theta^7.\end{aligned}
\label{ss180}\ee
The two nonequivalent structures are characterized as the only ones that have vanishing curvature
$$\Omega=0$$
of the corresponding Cartan connection $\varpi$. For them the  base manifold $M$ of the Cartan bundle $\bbR\to{\mathcal U}\to M$ is a homogeneous space $M={\mathcal U}/\bbR$, and the structures have strictly 9-dimensional group of local symmetries. The 9-dimensional group of symmetries is locally homeomorphic to $\mathcal U$, which gets equipped with a Lie group structure with the coframe $(\theta^1,\theta^2,\dots,\theta^8,\Theta)$ beeing a basis of its Maurer-Cartan form $\varpi$. 
 \end{theorem}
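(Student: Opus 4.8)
The plan is to exploit that $(\theta^1,\dots,\theta^8,\Theta)$ is a genuine coframe on $\mathcal U$, so that the hypotheses $\der S_2=0$ and $\der S_4=0$ can be read off coefficient-by-coefficient from the explicit expansions of $\der S_2$ and $\der S_4$ supplied by Proposition \ref{411}. First I would kill the $\theta^8$-terms: the $\theta^8$-coefficient of $\der S_2$ is $2S_2$ and that of $\der S_4$ is $\tfrac32(S_2+4S_4)$, which immediately force $S_2=0$ and then $S_4=0$. Substituting $S_2=S_4=0$ back in, the $\theta^7$-coefficient of $\der S_2$ reads $-2S_3-\tfrac43\epsilon$ and gives $S_3=-\tfrac23\epsilon$; the $\theta^2$-coefficients of $\der S_2$ and $\der S_4$ together give $S_8=S_{10}=0$; and the $\theta^7$-coefficient of $\der S_4$, namely $\epsilon+3S_4^2+S_9$, gives $S_9=-\epsilon$. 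This disposes of $S_2,S_4,S_8,S_9,S_{10}$ and of $S_3$.

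The remaining functions $S_6$ and $S_7$ are determined at the \emph{next} differential order. Since $S_3=-\tfrac23\epsilon$ is now constant, $\der S_3=0$, and I would feed this into the explicit expansion of $\der S_3$ from Proposition \ref{411}: its $\theta^8$-coefficient $2(3S_3+S_6)$ forces $S_6=-3S_3=2\epsilon$, and once $S_6=2\epsilon$, $S_3=-\tfrac23\epsilon$ are inserted its $\theta^2$-coefficient collapses to $6-2S_7$, forcing $S_7=3$. This yields the complete list of constants asserted. The step I expect to be the real obstacle is not any single extraction but the \textbf{consistency of the over-determined system}: the equations $\der S_2=\der S_4=0$ and $\der S_3=0$ produce more scalar relations than unknowns, so one must verify that every redundant coefficient is satisfied by the stated values and, crucially, that the Bianchi identities $\der^2\theta^i=\der^2\Theta=0$ close the reduced EDS. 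This differential closure is exactly what legitimizes the existence of the structures and must be checked directly on (\ref{ss180}).

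With every $S_i$ replaced by its constant value the EDS (\ref{ss18}) collapses to the constant-coefficient system (\ref{ss180}), and two things remain. For the flatness assertion I would substitute $S_2=S_4=S_8=S_{10}=0$, $S_3=-\tfrac23\epsilon$, $S_6=2\epsilon$, $S_7=3$, $S_9=-\epsilon$ directly into the curvature components (\ref{cc12}) and verify that $A^1{}_1,A^1{}_2,A^2{}_2,U^1,U^2,C^1{}_2,C,P^1,P^2$ all vanish, so that $\Omega=\der\varpi+\varpi\dz\varpi=0$; reading the same formulas in reverse shows that $\Omega=0$ forces precisely these constants, which is the claimed characterization. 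This verification is lengthy but entirely mechanical, so I would present it as a direct substitution rather than grind through it.

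Finally, $\Omega=0$ says that $\varpi$ satisfies the Maurer--Cartan equation of the Lie algebra $\mathfrak{t}_\epsilon$ with constant structure constants, so by the standard converse to Lie's theorems the coframe $(\theta^1,\dots,\theta^8,\Theta)$ makes $\mathcal U$ locally the Lie group with Maurer--Cartan form $\varpi$; the base $M=\mathcal U/\bbR$ is then homogeneous and the symmetry algebra is all of $\mathfrak{t}_\epsilon$, hence exactly nine-dimensional—it is achieved because the model is flat, and it cannot exceed nine because, by the theorem preceding this subsection, no Cartan reduction of (\ref{ss18}) below dimension nine is possible. Since $\epsilon=\mathrm{sign}(I_1)=\pm1$ is itself an invariant and the two resulting constant-coefficient systems are inequivalent, exactly two local models survive, which completes the proof.
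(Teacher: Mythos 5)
The paper never actually writes down a proof of Theorem \ref{sy9}: the theorem is stated right after the remark that ``a quick inspection of the integrability conditions'' shows $\der S_2=\der S_4=0$ forces all remaining $S_i$ to be constant, and the text then jumps to the explicit realization with $h=\tfrac13 y_2^3/y_7^2$. So your proposal is a reconstruction of an unwritten argument, and it is the natural one. Your extractions are correct as far as they go: the $\theta^8$-coefficients of $\der S_2$ and $\der S_4$ give $S_2=0$, then $S_4=0$; the $\theta^7$-coefficient of $\der S_2$ gives $S_3=-\tfrac23\epsilon$; the two $\theta^2$-coefficients give $S_8=S_{10}=0$; the $\theta^7$-coefficient of $\der S_4$ gives $S_9=-\epsilon$; and feeding $\der S_3=0$ (legitimate, since $S_3$ is now constant) into Proposition \ref{411} gives $S_6=-3S_3=2\epsilon$ from its $\theta^8$-coefficient and, after substitution, $6-2S_7=0$, i.e.\ $S_7=3$, from its $\theta^2$-coefficient. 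Your closing steps are also sound: the Maurer--Cartan equation plus the converse of Lie's third theorem, the bound $\dim\le 9$ coming from the rigidity of the invariant coframe on the $9$-dimensional $\mathcal U$, and the invariance of $\epsilon$ separating the two models.

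The step you postponed as ``entirely mechanical'' is, however, exactly where the argument breaks if executed literally against the printed formulas -- your instinct that the over-determined consistency check is ``the real obstacle'' is more right than you may realize. The $\theta^5$-coefficient of $\der S_2$ in Proposition \ref{411} reads $4\epsilon+\tfrac32 S_2^2-6S_3+2S_2S_4-2S_6$; with $S_2=S_4=0$ and $S_3=-\tfrac23\epsilon$ this equals $8\epsilon-2S_6$, so $\der S_2=0$ forces $S_6=4\epsilon$, flatly contradicting the $S_6=2\epsilon$ you (correctly) obtain from the $\theta^8$-coefficient of $\der S_3$. Taken at face value, the hypotheses of Theorem \ref{sy9} would be contradictory and the theorem vacuous. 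The resolution is that this one coefficient must contain a misprint: the values $S_2=S_4=S_8=S_{10}=0$, $S_3=-\tfrac23\epsilon$, $S_6=2\epsilon$, $S_7=3$, $S_9=-\epsilon$ satisfy every other relation -- substituting them into (\ref{ss18}) reproduces (\ref{ss180}) term by term, (\ref{ss180}) is differentially closed, and every curvature component in (\ref{cc12}) vanishes, so $\Omega=0$ -- whereas $S_6=4\epsilon$ fails all of these checks. Your proof therefore needs one additional ingredient to be complete: either recompute the expansion of $\der S_2$ from the Bianchi identities $\der^2=0$ applied to (\ref{ss18}) (which is how these expansions are generated in the first place), or derive the constants while bypassing the corrupted coefficient and then certify them exactly by the two verifications you already planned, namely the differential closure of (\ref{ss180}) and the vanishing of (\ref{cc12}). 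Without acknowledging and resolving this inconsistency, the ``direct substitution'' you defer would terminate in a contradiction rather than a confirmation.
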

%/home/pawel/MEGA/MEGAsync/notebooks/utah/monge_equations/2016_cartan_connection_minimal_torsion_new_paper_last_normalization_all_reparam_redukcja_do_10_red_to_9_9sym_1_guess_0_koniec.nb

To get an explicit realization of a Monge system with nine symmetries we take
\be h(y_2,y_7)=\tfrac13\frac{y_2^3}{y_7^2}\label{choice}\ee
in the solutions (\ref{systemu}) described in Theorem \ref{exa}. We then have the following expressions
\be\begin{aligned}
  I_1=&\frac{16(y_1y_7-y_4^2)}{y_7^6},\quad I_2=\frac{2}{y_7^3},\quad I_3=0,\quad R=-\frac{2}{y_7^6},\\
  u=&\frac{1}{y_7^2},\quad v_2=-\frac{4y_4}{y_7^3},\quad z_4=\frac{24y_4(3y_1y_7-8 y_4^2)}{y_7^5}
  \end{aligned}\label{bla}\ee
for the invariants (\ref{in1}) and the relevant structural functions (\ref{sysc3u1}). Thus we have structures with $\epsilon=1$ in the regions where $y_1y_7>y_4^2$, $y_7\neq 0$ and with $\epsilon=-1$ in the regions where $y_1y_7<y4^2$, $y_7\neq 0$. Since in these domains we also have $I_2\neq 0$, we can ask for the curvature functions $S_2,S_3,S_4,S_6,S_7,S_8,S_9,S_{10}$. Using the definitions (\ref{fsa})) and relations (\ref{bla}) we easilly find that the considered structures have $S_2=S_4=0$. Thus, applying Theorem \ref{sy9}, we see that the choice of a function $h=h(y_2,y_7)$ as in (\ref{choice}) corresponds precisely to the Monge structures having flat Cartan connection $\varpi$. Thus we have the following proposition. 
\begin{proposition}
  The two Monge structures, which have the main invariant $T(b_6)$ with a root of multiplicity five and, which have the group of local symmetries of dimension precisely equal to nine, admit local coordinates $(y_1,y_2,\dots,y_8)$ in which the Monge coframe 1-forms $(\om^1,\om^2,\dots,\om^8)$ satisfying (\ref{sysc3u}) reads:
  $$\begin{aligned}
   \om^1=&\der y_8-y_5\der y_2+y_3\der y_4,\\
   \om^2=&\der y_7-2y_4\der y_2,\\
   \om^3=&\der y_6+2y_3\der y_5-\frac{y_2}{y_7^2}\der y_7,\\
   \om^4=&\der y_5-y_1\der y_3,\\
   \om^5=&\der y_4-y_1\der y_2,\\
   \om^6=&\der y_3,\\
   \om^7=&\der y_2,\\
   \om^8=&\der y_1.
    \end{aligned}
$$
  The structures with $\epsilon=1$ correspond to the regions $y_1y_7>y_4^2$, $y_7\neq 0$, and the structures with $\epsilon=-1$ to the regions $y_1y_7>y_4^2$, $y_7\neq 0$.
\end{proposition}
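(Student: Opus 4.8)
The plan is to reduce the statement to the classification already obtained in Theorem \ref{sy9} together with the explicit integration of the EDS provided by Theorem \ref{exa}, so that essentially nothing new has to be integrated. By Theorem \ref{sy9} there are, up to local equivalence, exactly two Monge structures with a fivefold root of $T(b_6)$ and a nine-dimensional symmetry algebra, one for each value of $\epsilon=\pm1$; they are singled out among the $I_1\neq0$, $I_2\neq0$ family by the vanishing of all curvature functions of $\varpi$, equivalently by $S_2=S_4=0$. Hence it is enough to exhibit, for each sign $\epsilon$, one concrete coframe of the form (\ref{systemu}) whose invariants satisfy $S_2=S_4=0$ together with $I_1\neq0$, $I_2\neq0$, and then invoke the uniqueness part of Theorem \ref{sy9}.

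First I would recall from Theorem \ref{exa} that every structure with a fivefold root admits coordinates $(y_1,\dots,y_8)$ in which $(\om^1,\dots,\om^8)$ take the form (\ref{systemu}), with $u=\tfrac12 h_{222}$ and a free function $h=h(y_2,y_7)$. Then I would make the specific choice (\ref{choice}), $h=\tfrac13 y_2^3/y_7^2$, for which $\tfrac12 h_{22}=y_2/y_7^2$; substituting this into $\om^3$ reproduces exactly the coframe displayed in the proposition (all the remaining $\om^i$ being independent of $h$), and gives $u=1/y_7^2\neq0$, so $u\neq 0$ holds wherever $y_7\neq 0$.

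Next I would compute the derived scalars $v_2,z_4,\dots$ for this $h$ using the closed-form expressions of Theorem \ref{exa}, and then feed them into the invariant definitions (\ref{in1}) to obtain the values (\ref{bla}): $I_1=16(y_1y_7-y_4^2)/y_7^6$, $I_2=2/y_7^3$, $I_3=0$, $R=-2/y_7^6$. Inserting these together with $u,v_2,z_4$ into the formulas (\ref{fsa}) for $S_2$ and $S_4$ yields $S_2=S_4=0$ on the relevant domain. Since $I_2=2/y_7^3\neq0$ and $I_1\neq0$ whenever $y_1y_7\neq y_4^2$, Theorem \ref{sy9} then forces the structure to coincide, up to local equivalence, with one of the two flat models.

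Finally, to see that both structures are realized and to pin down the regions, I would use $\epsilon=\mathrm{sign}(I_1)$: from $I_1=16(y_1y_7-y_4^2)/y_7^6$ we get $\epsilon=+1$ exactly on $\{y_1y_7>y_4^2,\ y_7\neq0\}$ and $\epsilon=-1$ exactly on $\{y_1y_7<y_4^2,\ y_7\neq0\}$, so the single function (\ref{choice}) produces representatives of both equivalence classes in complementary open sets. I expect the only genuine labor to be the routine but lengthy substitution confirming $S_2=S_4=0$; the delicate point is organizational rather than conceptual, namely keeping track that it is the vanishing of $S_2,S_4$ (and not merely of $I_3,R$) that Theorem \ref{sy9} actually demands, and noting the evident typo in the stated region for $\epsilon=-1$, which should read $y_1y_7<y_4^2$.
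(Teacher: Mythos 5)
Your proposal is correct and follows essentially the same route as the paper: choose $h=\tfrac13 y_2^3/y_7^2$ as in (\ref{choice}) inside the coframe of Theorem \ref{exa}, compute the invariants (\ref{bla}), verify $S_2=S_4=0$ together with $I_1\neq 0$ and $I_2\neq 0$, and then invoke the uniqueness statement of Theorem \ref{sy9}, with the regions determined by $\epsilon=\mathrm{sign}(I_1)$. Your remark that the stated region for $\epsilon=-1$ should read $y_1y_7<y_4^2$ is also right, and agrees with the paper's own discussion preceding the proposition.
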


\end{document}